\newtheorem{theorem}{Theorem}[section]
\newtheorem*{theorem*}{Main Theorem}
\newtheorem{corollary}[theorem]{Corollary}
\newtheorem{lemma}[theorem]{Lemma}
\newtheorem{proposition}[theorem]{Proposition}
\newtheorem{definition-proposition}[theorem]{Definition-Proposition}
\theoremstyle{definition}
\newtheorem{definition}[theorem]{Definition}
\newtheorem*{question*}{Question}
\newtheorem{question**}{Question}
\newtheorem*{conjecture*}{Conjecture}
\newtheorem{example}[theorem]{Example}
\newtheorem{claim}{Claim}
\newtheorem*{claim*}{Claim}
\newtheorem{mainthm}{Theorem}
\def\Ext{\operatorname{Ext}}
\def\Hom{\operatorname{Hom}}
\def\Mod{\operatorname{\mathsf{Mod}}}\def\mod{\operatorname{\mathsf{mod}}}
\def\proj{\operatorname{\mathsf{proj}}}
\def\lex{\operatorname{\mathsf{lex}}}\def\Lex{\operatorname{\mathsf{Lex}}}
\def\dfct{\operatorname{\mathsf{def}}}\def\Dfct{\operatorname{\mathsf{Def}}}
\def\eff{\operatorname{\mathsf{eff}}}\def\Eff{\operatorname{\mathsf{Eff}}}
\def\fl{\operatorname{\mathsf{fl}}}
\def\Ab{\mathsf{Ab}}
\newcommand{\op}{\mathsf{op}}
\newcommand{\A}{\mathcal{A}}\newcommand{\B}{\mathcal{B}}
\newcommand{\C}{\mathcal{C}}
\renewcommand{\H}{\mathcal{H}}
\renewcommand{\P}{\mathcal{P}}
\renewcommand{\S}{\mathcal{S}}\newcommand{\T}{\mathcal{T}}
\newcommand{\U}{\mathcal{U}}\newcommand{\V}{\mathcal{V}}
\newcommand{\W}{\mathcal{W}}
\newcommand{\Ker}{\operatorname{Ker}}
\renewcommand{\Im}{\operatorname{Im}}
\newcommand{\Cok}{\operatorname{Cok}}
\newcommand{\id}{\mathsf{id}}
\newcommand{\xto}{\xrightarrow}
\begin{document}
\setlength{\baselineskip}{15pt}
\title[Auslander's defects over extriangulated categories]{Auslander's defects over extriangulated categories: an application for the General Heart Construction}
\author{Yasuaki Ogawa}
\address{Center for Educational Research of Science and Mathematics, Nara University of Education, Takabatake-cho, Nara, 630-8528, Japan}
\email{ogawa.yasuaki.gh@cc.nara-edu.ac.jp}
\keywords{Gabriel-Quillen embedding, extriangulated category, (co)localization sequence, cotorsion pair, heart}
\thanks{2020 {\em Mathematics Subject Classification.} Primary 18E10; Secondary 18G80, 18E35}
\maketitle

\begin{abstract}
The notion of extriangulated category was introduced by Nakaoka and Palu giving a simultaneous generalization of exact categories and triangulated categories.
Our first aim is to provide an extension to extriangulated categories of Auslander's formula:
for some extriangulated category $\C$, there exists a localization sequence $\dfct\C\to\mod\C\to\lex\C$,
where $\lex\C$ denotes the full subcategory of finitely presented left exact functors
and $\dfct\C$ the full subcategory of Auslander's defects.
Moreover we provide a connection between the above localization sequence and the Gabriel-Quillen embedding theorem.
As an application, we show that the general heart construction of a cotorsion pair $(\U,\V)$ in a triangulated category, which was provided by Abe and Nakaoka, is the same as the construction of a localization sequence $\dfct\U\to\mod\U\to\lex\U$.
\end{abstract}

\section*{Introduction}
Recently, the notion of extriangulated category was introduced in \cite{NP19} as a simultaneous generalization of exact categories and triangulated categories.
It allows us to unify many results on exact categories and triangulated categories in the same framework \cite{ZZ18, INP18, LN19}.
A typical example of extriangulated categories (which are possibly neither exact nor triangulated) is an extension-closed subcategory in a triangulated category.
Especially, the cotorsion class of a cotorsion pair in a triangulated category has a natural extriangulated structure.

Our first result is a further investigation of the following Auslander's result.
It was proved in \cite{Aus66} that, for any abelian category $\A$, the Yoneda embedding $\mathbb{Y}:\A\to\mod\A$ has an exact left adjoint $Q$, where $\mod\A$ is the category of finitely presented functors from $\A$ to the category of abelian groups.
Moreover the adjoint pair gives rise to a localization sequence
\begin{equation}\label{eq:Auslander's formula}
\xymatrix@C=1.2cm{
\dfct\A \ar[r]^-{{}}
&\mod\A\ar[r]^-{Q}\ar@/^1.2pc/[l]^-{}
&\A. \ar@/^1.2pc/[l]^{\mathbb{Y}}}
\end{equation}
Following \cite{Len98}, we call this \textit{Auslander's formula}.
Here $\dfct\A$ denotes the full subcategory of Auslander's defects in $\mod\A$ (see Definition \ref{def:defect}).
The first aim of this article is to present an extension to extriangulated categories of Auslander's formula:
for some extriangulated categories $\C$, there exists a localization sequence
\begin{equation}\label{eq:extriangulated Auslander's formula}
\xymatrix@C=1.2cm{
\dfct\C \ar[r]^-{{}}
&\mod\C\ar[r]^-{Q}\ar@/^1.2pc/[l]^-{}
&\lex\C\ar@/^1.2pc/[l]^{R}}
\end{equation}
where $\lex\C$ denotes the full subcategory of left exact functors in $\mod\C$ (Theorem \ref{thm:extri_Auslander's_formula}).
This localization sequence is closely related to the Gabriel-Quillen embedding theorem of exact categories (see Section \ref{sec:GQ}).
Furthermore, using the composed functor $E_\C:=Q\circ\mathbb{Y}:\C\to\mod\C\to\lex\C$,
we provide characterizations for the given extriangulated category $\C$ to be exact or abelian.

\begin{mainthm}[Theorem \ref{thm:exact_or_abelian}]\label{thm:A}
Let $\C$ be an extriangulated category with weak-kernels.
Then the following hold.
\begin{itemize}
\item[(1)] The functor $E_\C$ is exact and fully faithful if and only if $\C$ is an exact category.
\item[(2)] The functor $E_\C$ is an exact equivalence if and only if $\C$ is an abelian category. If this is the case, we have an equivalence $\C\simeq\lex\C$.
\end{itemize}
\end{mainthm}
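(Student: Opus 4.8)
My plan is to reduce the whole statement to the localization sequence of Theorem~\ref{thm:extri_Auslander's_formula}. Write $Q\dashv R$ for the adjoint pair there, so that $Q\colon\mod\C\to\lex\C$ is exact with kernel $\dfct\C$, $R$ is fully faithful, and an object $F\in\mod\C$ lies in $\lex\C$ precisely when the unit $F\to RQF$ is invertible; this last condition in particular forces $\Hom_{\mod\C}(D,F)=0$ for every defect $D\in\dfct\C$. Since $\C$ has weak kernels, $\mod\C$ is abelian, hence so is its Serre quotient $\lex\C$. For the ``if'' direction of~(1) I would first observe that when $\C$ is exact every representable functor $\Hom_\C(-,Y)$ is left exact, because a deflation is then an epimorphism and a cokernel of the paired inflation; consequently $RE_\C Y=RQ\mathbb{Y}Y\cong\mathbb{Y}Y$, and combining this with the adjunction $Q\dashv R$, the Yoneda lemma, and full faithfulness of $R$ and $\mathbb{Y}$ yields $\Hom_{\lex\C}(E_\C X,E_\C Y)\cong\Hom_\C(X,Y)$, so $E_\C$ is fully faithful. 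Exactness of $E_\C$ I would get by applying $\mathbb{Y}$ to a conflation $A\to B\to C$: monicity of the inflation gives an exact sequence $0\to\Hom_\C(-,A)\to\Hom_\C(-,B)\to\Hom_\C(-,C)\to\Cok\to0$ whose last term is a defect, so the exact functor $Q$ turns it into a short exact sequence $0\to E_\C A\to E_\C B\to E_\C C\to0$ (the compatibility with extensions being routine).

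For the ``only if'' direction of~(1), I would assume $E_\C$ exact and fully faithful and take an arbitrary conflation $A\xrightarrow{x}B\xrightarrow{y}C$; exactness of $E_\C$ makes $E_\C x$ a monomorphism in the abelian category $\lex\C$, whence faithfulness of $E_\C$ forces $x$ to be a monomorphism in $\C$. Thus every inflation of $\C$ is monic, and $\C$ is exact by \cite{NP19}. This also disposes of the ``only if'' part of~(2): an exact equivalence is in particular exact and fully faithful, so $\C$ is exact, and moreover $\C\simeq\lex\C$ is abelian.

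The substantive part is the ``if'' direction of~(2). Assuming $\C$ abelian, (1) already provides that $E_\C$ is exact and fully faithful; since $R$ is fully faithful with essential image the left exact functors and $RE_\C\cong\mathbb{Y}$, essential surjectivity of $E_\C$ amounts to showing that every finitely presented left exact functor $F$ is representable. Given a presentation $\Hom_\C(-,X_1)\xrightarrow{d_*}\Hom_\C(-,X_0)\to F\to0$ with $d\colon X_1\to X_0$, I would factor $d=\iota e$ with $e\colon X_1\twoheadrightarrow I$ and $\iota\colon I\hookrightarrow X_0$, put $Z=\Cok(\iota)$ and $g\colon X_0\twoheadrightarrow Z$, and extract a short exact sequence $0\to D_1\to F\to\Im(g_*)\to0$ in which $D_1$ is the defect of the conflation $0\to\Ker e\to X_1\xrightarrow{e}I\to0$ and in which $\Hom_\C(-,Z)/\Im(g_*)=\Cok(g_*)$ is the defect of the conflation $0\to I\xrightarrow{\iota}X_0\xrightarrow{g}Z\to0$. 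Left exactness of $F$ gives $\Hom_{\mod\C}(D_1,F)=0$, hence $D_1=0$ and $F\cong\Im(g_*)\hookrightarrow\Hom_\C(-,Z)$; and since $F$ and $\Hom_\C(-,Z)$ are both left exact while $Q$ annihilates $\Cok(g_*)$, applying $Q$ and then $R$ turns the inclusion into an isomorphism, so $F\cong\mathbb{Y}Z$ is representable. Hence $\mathbb{Y}$ and $R$ have the same essential image, which gives the equivalence $\C\simeq\lex\C$ and identifies it with $E_\C$. (Alternatively, one may quote the classical Auslander formula~\eqref{eq:Auslander's formula} for the abelian category $\C$, whose localization sequence has the same kernel $\dfct\C$ as~\eqref{eq:extriangulated Auslander's formula}, and compare the two.)

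I expect the main obstacle to be exactly this essential-surjectivity step: controlling an arbitrary finitely presented left exact functor through an epi--mono factorization of a presenting morphism and recognizing the two ``error terms'' as honest defects of conflations in $\C$. Everything else is bookkeeping with the adjunction $Q\dashv R$ and the characterization of exact extriangulated categories.
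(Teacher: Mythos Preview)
Your proposal has one genuine gap: you begin by assuming that the right adjoint $R$ of $Q$ exists, but Theorem~\ref{thm:extri_Auslander's_formula} only guarantees the Serre quotient $\dfct\C\to\mod\C\xrightarrow{Q}\frac{\mod\C}{\dfct\C}$; the localization sequence with $R$ is stated there conditionally (``if the quotient functor $Q$ has a right adjoint''), and Example~\ref{ex:Enomoto} shows that $R$ can fail to exist even when $\C$ is exact. Since your argument for the ``if'' direction of~(1) runs through $R$ (via $RE_\C Y\cong\mathbb{Y}Y$ and the unit of $Q\dashv R$), it does not go through as written. The repair is easy and is exactly what the paper does: once you know $\mathbb{Y}Y\in(\dfct\C)^\perp$ because representables are left exact over an exact category, Lemma~\ref{lem:property_of_localization_seq}(1) already gives $\mod\C(\mathbb{Y}X,\mathbb{Y}Y)\xrightarrow{\sim}\tfrac{\mod\C}{\dfct\C}(E_\C X,E_\C Y)$ without any appeal to $R$. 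A minor related omission in your ``only if'' of~(1): Proposition~\ref{prop:extri_to_exact} needs both that inflations are monic and that deflations are epic; you only checked the former, though the latter follows by the same argument.

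For~(2) the paper takes the short route you mention parenthetically: it simply compares the Serre quotient $\dfct\C\to\mod\C\to\frac{\mod\C}{\dfct\C}$ with the classical Auslander formula (Lemma~\ref{lem:Auslander's_defect}), which for abelian $\C$ identifies $\mathbb{Y}$ itself as the right adjoint and hence gives $\C=\frac{\mod\C}{\dfct\C}=\lex\C$ immediately. Your hands-on essential-surjectivity argument via the epi--mono factorization of a presenting morphism is correct and pleasant, but note that its final step (``apply $Q$ and then $R$'') again relies on the existence of $R$; this is legitimate here because $\C$ abelian supplies $R$ via Auslander's formula, but at that point you may as well quote the formula directly as the paper does.
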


Our second result is an application for a cotorsion pair $(\U,\V)$ in a triangulated category $\T$.
In \cite{Nak11, AN12}, it was proved that there exists an abelian category $\underline{\H}$ associated to the cotorsion pair, called the heart, and a cohomological functor $\mathbb{H}: \T\to\underline{\H}$.
This result has been shown for two extremal cases \cite{BBD, KZ08}, namely, t-structures and 2-cluster tilting subcategories (see \cite[Proposition 2.6]{Nak11} for details).
Since the cotorsion class $\U$ has a natural extriangulated structure, our first result shows the existence of the localization sequence $\dfct\U\to\mod\U\to\lex\U$.
Using this localization, we provide a good understanding for the heart and the cohomological functor.

\begin{mainthm}[Theorem \ref{thm:heart_is_lex}]\label{thm:B}
Let $(\U,\V)$ be a cotorsion pair in a triangulated category.
Then the heart $\underline{\H}$ of $(\U,\V)$ is naturally equivalent to $\lex\U$.
\end{mainthm}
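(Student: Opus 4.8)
The plan is to identify Nakaoka's heart with the Gabriel quotient $\mod\U/\dfct\U=\lex\U$: derive a comparison functor from the cohomological functor $\mathbb H$, and then check it is an equivalence by comparing morphism spaces.

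First recall the two objects to be matched. The cotorsion class $\U$ carries the extriangulated structure inherited from $\T$, whose conflations are the triangles $A\to B\to C\to A[1]$ of $\T$ with $A,B,C\in\U$; since $(\U,\V)$ is a cotorsion pair, $\U$ has weak-kernels (lift a weak-kernel triangle of $\T$ into $\U$ along one of the defining triangles, using $\Ext^1_\T(\U,\V)=0$), so Theorem~\ref{thm:extri_Auslander's_formula} applies and provides the localization sequence $\dfct\U\to\mod\U\xrightarrow{Q}\lex\U$, with fully faithful section $R$ and $E_\U=Q\circ\mathbb Y$. On the other side, recall from \cite{Nak11,AN12} the heart $\underline\H=\H/[\W]$ (with $\W=\U\cap\V$), the cohomological functor $\mathbb H\colon\T\to\underline\H$, the fact that $\mathbb H$ kills $\V$, and the fact that every object of $\underline\H$ is a cokernel of a morphism between objects of the form $\mathbb H(U)$, $U\in\U$.

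Now I would construct the comparison functor. Restricting $\mathbb H$ along $\U\hookrightarrow\T$ gives $\mathbb H|_\U\colon\U\to\underline\H$; since $\mathbb H$ is cohomological and each conflation of $\U$ is in particular a triangle of $\T$, this restriction sends a conflation $A\to B\xrightarrow{g}C$ to a right exact sequence $\mathbb H A\to\mathbb H B\to\mathbb H C\to0$ in the abelian category $\underline\H$. Consequently $\mathbb H|_\U$ induces a functor $\Phi\colon\mod\U\to\underline\H$ sending an object $F$ with presentation $\mathbb Y(U_1)\xrightarrow{\mathbb Y(f)}\mathbb Y(U_0)\to F\to0$ to $\Cok(\mathbb H(f))$; independence of the presentation follows from the right-exactness just noted, and $\Phi\circ\mathbb Y\cong\mathbb H|_\U$. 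Using the full cohomology long exact sequence of $\mathbb H$ (not only its right-exact tail) one checks $\Phi$ is exact. It annihilates $\dfct\U$, since the defect of a conflation $A\to B\xrightarrow{g}C$ is $\Cok(\mathbb Y(g))$ and $\Phi$ sends it to $\Cok(\mathbb H(g))=0$. Hence $\Phi$ factors through the Gabriel quotient as an exact functor $\Psi\colon\lex\U\to\underline\H$ with $\Psi\circ E_\U\cong\mathbb H|_\U$; and $\Psi$ is essentially surjective because its image consists exactly of the cokernels $\Cok(\mathbb H(f))$ of maps between objects $\mathbb H(U)$.

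It remains to prove $\Psi$ fully faithful, and this is the main obstacle. Because $\mod\U$ is generated by the representables $\mathbb Y(U)$ (so every object of $\lex\U$ is a cokernel of a map between objects $E_\U(U)$) and $\Psi$ is exact, a diagram chase over two-term presentations reduces full faithfulness---and simultaneously the equality $\ker\Phi=\dfct\U$---to the single natural isomorphism
\[
\Hom_{\lex\U}(E_\U U,E_\U U')\;\cong\;\Hom_{\underline\H}(\mathbb H U,\mathbb H U')\qquad(U,U'\in\U).
\]
By the adjunction $Q\dashv R$ and the Yoneda lemma, the left side is the value at $U$ of the left-exact reflection $RQ\mathbb Y(U')$ of the representable functor $\U(-,U')$; the right side is computed by Nakaoka's formula for the hom-spaces of the heart. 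The crux is to reconcile the two, i.e.\ to prove that the left-exact approximation furnished by the localization $Q$ coincides with the truncation built into the cohomological functor: concretely, that the canonical map $\U(-,U')\to\underline\H(\mathbb H(-),\mathbb H U')$ (given by applying $\mathbb H$) has kernel and cokernel lying in $\dfct\U$, so that $RQ\mathbb Y(U')\cong\underline\H(\mathbb H(-),\mathbb H U')|_\U$ in $\lex\U$. Granting this, $\Psi$ is an equivalence, canonical and compatible with $E_\U$ and $\mathbb H|_\U$, which is the assertion of the theorem. (Alternatively one may conclude via Gabriel's recognition criterion for quotient functors applied to the exact, essentially surjective $\Phi$---the point requiring work, again, being that $\ker\Phi=\dfct\U$ and that subobjects lift along $\Phi$.)
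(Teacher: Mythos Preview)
Your strategy is reasonable and in fact dual to the paper's, but it leaves the substantive step unproven. The paper builds the equivalence in the \emph{opposite} direction: it defines $\Psi:\underline{\H}\to\lex\U[-1]$ by $\Psi(\pi H)=(-,H)|_{\U[-1]}$ (the restricted Yoneda functor), and then verifies directly that this is dense, full, and faithful. The key ingredients are concrete and short: using $\H\subseteq\W*\V[1]$ one checks that $(-,H)|_{\U[-1]}$ is left exact; using $\H\subseteq\U[-1]*\W$ one reads off projective presentations in $\mod\U[-1]$ and lifts morphisms; faithfulness comes from the same triangle by showing any map killed by $\Psi$ factors through $\W$. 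Only \emph{after} establishing the equivalence does the paper identify your functor (induced by $\mathbb H$) with the inverse $\Psi^{-1}\circ Q$.

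Your proposal, by contrast, starts from $\mathbb H$ and defers exactly the computation that carries the weight. You write ``Granting this, $\Psi$ is an equivalence'' about the isomorphism $\Hom_{\lex\U}(E_\U U,E_\U U')\cong\Hom_{\underline\H}(\mathbb H U,\mathbb H U')$, but proving that the comparison map $\U(-,U')\to\underline\H(\mathbb H(-),\mathbb H U')$ has kernel and cokernel in $\dfct\U$ is not a formality: it requires precisely the structural facts about $\H=(\W*\V[1])\cap(\U[-1]*\W)$ that the paper exploits in its Claims~2--4. You have not indicated how to carry this out, and there is no shortcut here---``Nakaoka's formula for the hom-spaces of the heart'' is not a single citable identity one can invoke. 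Two further points also need attention: your assertion that $\Phi$ is exact (not merely right exact) needs an argument, since it amounts to showing that the $\U$-approximation of the cone of a map in $\U$ induces a surjection after applying $\mathbb H$; and the claim that every object of $\underline\H$ is a cokernel of some $\mathbb H(f)$ with $f$ in $\U$ is not quite an off-the-shelf fact---the natural presentations of objects in $\H$ come from $\U[-1]*\W$, which is why the paper works with $\U[-1]$ throughout. None of this is fatal to your approach, but as written the proposal has isolated the crux without resolving it.
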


The article is organized as follows:
In Section \ref{sec:preliminaries}, we deal with the definitions and basic properties concerning the Serre quotient and the extriangulated category.
In Section \ref{sec:defect}, we construct the localization sequence (\ref{eq:extriangulated Auslander's formula}) and prove Theorem \ref{thm:A}.
In Section \ref{sec:GQ}, we expand the sequence (\ref{eq:extriangulated Auslander's formula}) by taking direct colimits and explain how it relates to the Gabriel-Quillen embedding theorem.
Section \ref{sec:heart} is devoted to a proof of Theorem \ref{thm:B}.

\subsection*{Notation and convention}
All categories and functors appearing in this article are additive, unless otherwise specified.
The symbols $\A, \B, \C$ and etc. always denote additive categories, and the set of morphisms $X\rightarrow Y$ in $\C$ is denoted by $\C(X,Y)$ or simply denoted by $(X,Y)$ if there is no confusion.
If there exists a fully faithful functor $\U\hookrightarrow\C$,
we often regard $\U$ as a full subcategory of $\C$.
All subcategories in a given additive category are assumed to be full, additive and closed under isomorphisms.
We denote by $\C/[\U]$ the ideal quotient category of $\C$ modulo the (two-sided) ideal $[\U]$ in $\C$ consisting of all
morphisms having a factorization through an object in $\U$.
Consider a functor $F:\C\rightarrow \B$.
We define the \textit{image} and the \textit{kernel} of $F$ as the full subcategories
$$\Im F:=\{Y\in\B\mid {^\exists X\in\C},\  F(X)\cong Y\}\ \ \textnormal{and}\ \  \Ker F:=\{X\in\C\mid F(X)=0\},$$
respectively.
For a full subcategory $\U$ in $\C$, the symbol $F|_{\U}$ denotes the restriction of $F$ on $\U$.

Furthermore, we introduce the following notions:
For an additive category $\C$,
a \textit{(right) $\C$-module}  is defined to be a contravariant functor $\C \to \mathsf{Ab}$ and a \textit{morphism} $X\to Y$ between $\C$-modules $X$
and $Y$ is a natural transformation. Thus we define an abelian category $\Mod\C$ of $\C$-modules.
In the functor category $\Mod\C$, the morphism-space $(\Mod\C)(X,Y)$ is usually denoted by $\Hom_\C(X,Y)$.
We denote by $\mod\C$ the full subcategory of finitely presented $\C$-module in $\Mod\C$.
Let $\U$ be a full subcategory in $\C$.
We denote by $\mathsf{res}_\U:\Mod\C\to\Mod\U$ the restriction functor which sends $F$ to $F|_\U$.
We call the composed fuctor $\mathsf{res}_\U\circ\mathbb{Y}:\C\hookrightarrow\Mod\C\to\Mod\U$ the \textit{restricted Yoneda functor} of $\C$ relative to $\U$,
where $\mathbb{Y}$ is the usual Yoneda embedding.
We abbreviate as $\mathbb{Y}_\U:=\mathsf{res}_\U\circ\mathbb{Y}$.

\section{Preliminaries}\label{sec:preliminaries}
\subsection{The Serre quotient}
We firstly recall the definition and some basic properties of the localization theory of abelian categories.
Let $\A$ be an abelian category.
We recall that a full subcategory $\S$ in $\A$ is a \textit{Serre subcategory} if, for each exact sequence
$0\to X\to Y\to Z\to 0$ in $\A$, we have $Y\in\S$ if and only if $X, Z\in\S$.
In this case, we have a \textit{Serre quotient} $\A/\S$ of $\A$ relative to $\S$ which is known to be abelian.
We also recall that the natural quotient functor $Q:\A\to\A/\S$ is exact.
We denote this situation by the diagram
$\S\to\A\to\A/\S$ of functors.

\begin{proposition}\label{prop:right_adjoint1}
Let $\A$ be an abelian category and $\S$ its Serre subcategory.
If the quotient functor $Q:\A\to\A/\S$ admits a right adjoint $R$,
then the natural inclusion $\S\hookrightarrow\A$ also admits a right adjoint.
This situation will be denoted by the following diagram of functors
\begin{equation}\label{eq:localization_seq}
\xymatrix@C=1.2cm{
\S\ar[r]^-{{}}
&\A\ar[r]^-{Q}\ar@/^1.2pc/[l]^-{}
&\A/\S\ar@/^1.2pc/[l]^{R}}
\end{equation}
In this case, the Serre subcategory $\S$ is said to be \textnormal{localizing}, and we call this diagram a \textnormal{localization sequence} of $\A$ relative to $\S$.
\end{proposition}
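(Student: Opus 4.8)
The strategy is to build the right adjoint $t\colon\A\to\S$ of the inclusion $i\colon\S\hookrightarrow\A$ directly out of the unit of the adjunction $Q\dashv R$; this reproduces the classical torsion-radical construction, and it will turn out that no extra hypothesis on $\A$ (such as a Grothendieck condition) is needed. Write $\eta\colon\mathrm{id}_\A\Rightarrow RQ$ and $\varepsilon\colon QR\Rightarrow\mathrm{id}_{\A/\S}$ for the unit and counit, and recall the two basic facts about a Serre quotient that will be used: $Q$ is exact, and $Q(X)\cong 0$ if and only if $X\in\S$, i.e.\ $\Ker Q=\S$. For $A\in\A$ I would set
\begin{equation*}
tA:=\Ker\bigl(\eta_A\colon A\longrightarrow RQ(A)\bigr),
\end{equation*}
with its canonical monomorphism $\iota_A\colon tA\hookrightarrow A$; by naturality of $\eta$ and the universal property of kernels, $A\mapsto tA$ becomes an additive functor $\A\to\A$ making $\iota\colon t\Rightarrow\mathrm{id}_\A$ natural.

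The first substantial step is to show $tA\in\S$ for every $A$. Applying the exact functor $Q$ to the left exact sequence $0\to tA\xrightarrow{\iota_A}A\xrightarrow{\eta_A}RQ(A)$ identifies $Q(tA)$ with $\Ker\bigl(Q(\eta_A)\bigr)$. But the triangle identity $\varepsilon_{Q(A)}\circ Q(\eta_A)=\mathrm{id}_{Q(A)}$ exhibits $Q(\eta_A)$ as a split monomorphism, in particular a monomorphism, so $Q(tA)=0$ and hence $tA\in\S$. Thus $t$ corestricts to a functor $\A\to\S$, which I continue to denote $t$.

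Finally I would check that $\iota$ exhibits $t$ as right adjoint to $i$: for $S\in\S$ and $A\in\A$, post-composition with $\iota_A$ should give a bijection $\Hom_\A(S,tA)\xrightarrow{\ \sim\ }\Hom_\A(S,A)$ natural in both variables. Injectivity is clear since $\iota_A$ is monic. For surjectivity, given $f\colon S\to A$, naturality of $\eta$ yields $\eta_A\circ f=RQ(f)\circ\eta_S$, and $Q(S)=0$ forces $RQ(S)=R(0)=0$, so $\eta_S=0$ and therefore $\eta_A\circ f=0$; hence $f$ factors uniquely through $tA=\Ker\eta_A$. This produces the adjunction $i\dashv t$ and the diagram (\ref{eq:localization_seq}). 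I expect the genuinely load-bearing part of the argument to be the step $tA\in\S$ --- it is where the Serre property of $\S$ and the exactness of $Q$ really enter, via the triangle identity --- together with the routine but slightly fussy verification that $A\mapsto tA$ is functorial with $\iota$ natural; everything else is formal manipulation of the two adjunctions.
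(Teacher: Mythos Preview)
Your argument is correct. The paper does not actually supply a proof of this proposition: it is stated in the preliminaries as a known fact from the localization theory of abelian categories (in the spirit of the references to \cite{Pop} and \cite{GL91}), so there is no ``paper's own proof'' to compare against. The construction you give --- defining $tA=\Ker(\eta_A)$, using exactness of $Q$ together with the triangle identity to see $Q(tA)=0$, and then checking the universal property via $\eta_S=0$ for $S\in\S$ --- is exactly the standard torsion-radical argument one finds in the literature, and every step goes through as you describe.
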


Dually, we define the \textit{colocalizing subcategory} $\S$ and \textit{colocalization sequence} of $\A$ relative to $\S$.
If a given Serre quotient $\S\to\A\to\A/\S$ gives rise to a localizing and colocalizing sequence, we call this a \textit{recollement} of $\A$ relative to $\S$, and denote it as
\begin{equation}\label{eq:localization_seq}
\xymatrix@C=1.2cm{
\S\ar[r]^-{{}}
&\A\ar[r]^-{Q}\ar@/^1.2pc/[l]^-{}\ar@/^-1.2pc/[l]^-{}
&\A/\S\ar@/^1.2pc/[l]^{R}\ar@/^-1.2pc/[l]_{L}
}
\end{equation}
The following are standard examples of (co)localization sequences.

\begin{example}\label{ex:colocalization_of_functor_category}
\begin{itemize}
\item[(1)] Let $\C$ be an additive category with weak-kernels and $\P$ its contravariantly finite subcategory.
Then we have the colocalization sequence below
\begin{equation*}
\xymatrix@C=1.2cm{
\mod(\C/[\P])\ar[r]^-{{}}
&\mod\C\ar[r]^-{\mathsf{res}_\P}\ar@/^-1.2pc/[l]^-{}
&\mod\P\ar@/^-1.2pc/[l]_{L}
}
\end{equation*}
\item[(2)] Let $R$ be a Noetherian ring with an idempotent $e$.
Then, the sequence of canonical functors $\mod(R/ReR)\to\mod R\to\mod eRe$ gives rise to a recollement.
\end{itemize}
\end{example}

We should mention that the converse of Proposition \ref{prop:right_adjoint1} does not hold, that is,
even if the inclusion $\S\hookrightarrow\A$ admits a right adjoint, the quotient functor $Q:\A\to\A/\S$ does not necessarily 
admit a right adjoint.
The following gives a criterion for a Serre quotient to give rise to a localization sequence, e.g. \cite[Ch. 4. Thm. 4.5]{Pop}.

\begin{proposition}\label{prop:right_adjoint2}
Consider a Serre quotient $\S\to\A\to\A/\S$.
Then the following are equivalent:
\begin{itemize}
\item[(i)] Both the inclusion $\S\hookrightarrow\A$ and the quotient $\A\to\A/\S$ admit right adjoints;
\item[(ii)] For each $X\in\A$, there exists an exact sequence $S\to X\to Y$ in $\A$ satisfying $S\in\S$ and $\A(S', Y)=0=\Ext^1_\A(S', Y)$ for any $S'\in\S$.
\end{itemize}
\end{proposition}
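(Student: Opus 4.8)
The plan is to run the standard argument from the localization theory of abelian categories; this is essentially \cite[Ch.~4, Thm.~4.5]{Pop}. The key auxiliary notion is that of an \emph{$\S$-closed} object, namely an object $Y\in\A$ with $\A(S',Y)=0=\Ext^1_\A(S',Y)$ for every $S'\in\S$. Call a morphism $f$ of $\A$ an \emph{$\S$-isomorphism} if $\Ker f,\Cok f\in\S$; since $\S$ is a Serre subcategory we have $\Ker Q=\S$, and as $Q$ is exact the $\S$-isomorphisms are precisely the morphisms inverted by $Q$. A routine diagram chase with short exact sequences shows that $Y$ is $\S$-closed exactly when $\A(-,Y)$ carries every $\S$-isomorphism to a bijection. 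I will also use freely that the right adjoint of a Serre quotient functor is automatically fully faithful with invertible counit.

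For $(i)\Rightarrow(ii)$ I would argue as follows. Given a right adjoint $R$ of $Q$, put $Y:=RQ(X)$ and let $\eta_X\colon X\to Y$ be the unit. Applying the exact functor $Q$ to $\eta_X$ and invoking a triangle identity shows that $Q(\eta_X)$ is invertible, so $S:=\Ker\eta_X$ lies in $\S$ and $S\to X\xrightarrow{\eta_X}Y$ is exact at $X$. It remains to check that $Y=RQ(X)$ is $\S$-closed. For $S'\in\S$ we have $\A(S',RQ(X))\cong(\A/\S)(Q(S'),Q(X))=0$ because $Q(S')=0$; and given an extension $0\to Y\to E\to S'\to 0$, one notes that $Q$ sends $Y\hookrightarrow E$ to an isomorphism and transports its inverse back through the adjunction to a morphism $E\to Y$, which a second triangle-identity computation identifies as a retraction. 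Hence $\Ext^1_\A(S',Y)=0$. (This is just the standard identification of the essential image of $R$ with the class of $\S$-closed objects.)

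For $(ii)\Rightarrow(i)$ --- the substantive direction --- I would first reduce, by Proposition \ref{prop:right_adjoint1}, to constructing a right adjoint of $Q$; and for that it is enough to produce, for every $X\in\A$, an $\S$-isomorphism $X\to X^{\mathrm{cl}}$ with $X^{\mathrm{cl}}$ $\S$-closed. Indeed, granting this, $Q$ restricts to an equivalence from the full subcategory of $\S$-closed objects onto $\A/\S$ (fully faithfulness coming from the calculus of fractions that computes $\A/\S$, together with the characterization of $\S$-closed objects above), and a quasi-inverse composed with the inclusion is the desired right adjoint. To build $X^{\mathrm{cl}}$, apply (ii) to $X$: there is an exact sequence $S\to X\xrightarrow{v}Y$ with $S\in\S$ and $Y$ $\S$-closed. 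Since $Y$ is $\S$-closed, any subobject of $X$ lying in $\S$ is sent to $0$ by $v$, so $\Ker v=\Im(S\to X)\in\S$ is the \emph{largest} subobject of $X$ lying in $\S$; in the factorization $X\twoheadrightarrow\bar X:=\Im(X\to Y)\hookrightarrow Y$ of $v$, the epimorphism $X\twoheadrightarrow\bar X$ is therefore an $\S$-isomorphism and $\bar X$ is a subobject of $Y$. Now apply (ii) to $C:=\Cok(\bar X\hookrightarrow Y)$ to extract its largest subobject $t(C)\in\S$, and let $X^{\mathrm{cl}}\subseteq Y$ be the preimage of $t(C)$ along $Y\twoheadrightarrow C$, so that $\bar X\subseteq X^{\mathrm{cl}}\subseteq Y$ with $X^{\mathrm{cl}}/\bar X\cong t(C)\in\S$ and $Y/X^{\mathrm{cl}}\cong C/t(C)$. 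Then $X\to X^{\mathrm{cl}}$ is an $\S$-isomorphism, and the short exact sequences $0\to\bar X\to X^{\mathrm{cl}}\to t(C)\to 0$ and $0\to X^{\mathrm{cl}}\to Y\to C/t(C)\to 0$ --- together with $Y$ being $\S$-closed and $C/t(C)$ having no nonzero subobject in $\S$ --- force $\A(S',X^{\mathrm{cl}})=0=\Ext^1_\A(S',X^{\mathrm{cl}})$, i.e.\ $X^{\mathrm{cl}}$ is $\S$-closed.

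The main obstacle is precisely this construction of $X^{\mathrm{cl}}$: condition (ii) does not directly hand us an $\S$-closed object that is $\S$-isomorphic to $X$, since the morphism $X\to Y$ it provides need not be an $\S$-isomorphism --- its cokernel is uncontrolled. The trick is to place a quotient of $X$ inside the $\S$-closed object $Y$ and then enlarge it, inside $Y$, by the maximal $\S$-subobject of the resulting cokernel; afterwards the vanishing properties of $Y$ propagate to $X^{\mathrm{cl}}$ along the two short exact sequences above. The remaining points --- the adjunction bookkeeping in $(i)\Rightarrow(ii)$ and the equivalence onto $\A/\S$ in $(ii)\Rightarrow(i)$ --- are routine.
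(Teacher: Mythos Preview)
The paper does not supply its own proof of this proposition; it is merely stated with the reference \cite[Ch.~4, Thm.~4.5]{Pop} and used as a black box. Your proposal is a correct outline of the standard argument one finds there, so there is nothing in the paper to compare it against. The two ingredients you isolate---identifying the essential image of $R$ with $\S^\perp$ via the unit in $(i)\Rightarrow(ii)$, and in $(ii)\Rightarrow(i)$ manufacturing the $\S$-closure $X^{\mathrm{cl}}$ by first passing to the image $\bar X\subseteq Y$ and then saturating by the maximal $\S$-subobject of $Y/\bar X$---are exactly the classical ones. One minor remark: in your verification that $X^{\mathrm{cl}}$ is $\S$-closed, only the second short exact sequence $0\to X^{\mathrm{cl}}\to Y\to C/t(C)\to 0$ is actually needed (the $\Hom$-vanishing already follows from $X^{\mathrm{cl}}\hookrightarrow Y$, and the $\Ext^1$-vanishing from the long exact sequence together with $\A(S',C/t(C))=0$); the first sequence plays no role.
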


In the case that a given category $\A$ is Grothendieck,
we have another criterion, e.g. \cite[Ch. 4. Prop. 6.3]{Pop}.

\begin{proposition}\label{prop:right_adjoint3}
Assume that $\A$ is a Grothendieck category.
For a Serre quotient $\S\to\A\to\A/\S$, the following are equivalent:
\begin{itemize}
\item[(i)] Both the inclusion $\S\hookrightarrow\A$ and the quotient $\A\to\A/\S$ admit right adjoints;
\item[(ii)] The Serre subcategory $\S$ is closed under coproducts.
\end{itemize}
\end{proposition}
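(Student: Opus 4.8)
The plan is to derive this from the criterion of Proposition~\ref{prop:right_adjoint2}, combined with the two features of a Grothendieck category that do not follow from abelianness alone, namely well-poweredness and the existence of injective envelopes.

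For the implication (i)$\Rightarrow$(ii) only the right adjoint of $Q$ is needed: being a left adjoint, $Q$ preserves all colimits, in particular coproducts, and $\Ker Q=\S$ for a Serre quotient. Hence for any family $\{S_i\}_{i\in I}$ in $\S$ one computes $Q\!\left(\coprod_{i}S_i\right)\cong\coprod_i Q(S_i)=0$, so $\coprod_i S_i\in\Ker Q=\S$.

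For (ii)$\Rightarrow$(i) it suffices, by Proposition~\ref{prop:right_adjoint2}, to attach to each $X\in\A$ an exact sequence $S\to X\to Y$ with $S\in\S$ and $\A(S',Y)=0=\Ext^1_\A(S',Y)$ for every $S'\in\S$. First I would build the torsion subobject: since $\A$ is well-powered, the subobjects of $X$ lying in $\S$ form a set, so their sum $t(X)$ is defined; being a quotient of a coproduct of objects of $\S$, it again lies in $\S$ by hypothesis, and it is plainly the largest subobject of $X$ in $\S$. Set $\bar X:=X/t(X)$. A short chase shows $\bar X$ is $\S$-torsion-free, i.e.\ $\A(S',\bar X)=0$ for all $S'\in\S$: the preimage in $X$ of a nonzero subobject of $\bar X$ lying in $\S$ would be an extension of two objects of $\S$, hence an object of $\S$ strictly containing $t(X)$, which is impossible. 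To obtain the $\Ext^1$-vanishing I would embed $\bar X$ into an injective envelope $E$ and let $Y\subseteq E$ be the preimage of the torsion subobject $t(E/\bar X)$, so that $0\to\bar X\to Y\to t(E/\bar X)\to 0$ is exact. Since $\bar X\subseteq E$ is essential, so is $\bar X\subseteq Y$; hence $Y$ is again $\S$-torsion-free, because a nonzero subobject of $Y$ lying in $\S$ would meet $\bar X$ in a nonzero subobject of $\bar X$ lying in $\S$. Now apply $\A(S',-)$ to $0\to Y\to E\to E/Y\to 0$: as $E$ is injective and $E/Y\cong(E/\bar X)/t(E/\bar X)$ is $\S$-torsion-free, the long exact sequence forces $\Ext^1_\A(S',Y)=0$. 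Finally the composite $t(X)\hookrightarrow X\twoheadrightarrow\bar X\hookrightarrow Y$ is exact at $X$ with $t(X)\in\S$ and $Y$ as required, and Proposition~\ref{prop:right_adjoint2} yields (i).

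The crux is the $\Ext^1$-step of (ii)$\Rightarrow$(i): upgrading $\S$-torsion-freeness to the vanishing of $\Ext^1_\A(S',Y)$, which is precisely where injective envelopes — and thus the Grothendieck hypothesis — enter; coproduct-closedness of $\S$ is used only to make the torsion functor $t$ well-defined. Alternatively one could sidestep Proposition~\ref{prop:right_adjoint2} by checking directly that $Q$ commutes with coproducts — the point being that a subobject $Z$ of $\coprod_i X_i$ with $\bigl(\coprod_i X_i\bigr)/Z\in\S$ contains $\coprod_i(Z\cap X_i)$, so the subobjects of the form $\coprod_i X_i'$ with $X_i/X_i'\in\S$ are cofinal — whence $\A/\S$ is Grothendieck and $Q$ preserves all small colimits, and then apply the special adjoint functor theorem followed by Proposition~\ref{prop:right_adjoint1}; I would nevertheless favour the first route, since it reuses machinery already developed above.
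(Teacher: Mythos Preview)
Your argument is correct. Note, however, that the paper does not actually prove this proposition: it is quoted from Popescu \cite[Ch.~4, Prop.~6.3]{Pop} without proof, so there is no in-paper argument to compare against. The route you take---building the torsion radical $t(X)$ via well-poweredness and coproduct-closure of $\S$, then passing to the $\S$-closure $Y$ of $\bar X$ inside an injective envelope to secure the $\Ext^1$-vanishing, and concluding via Proposition~\ref{prop:right_adjoint2}---is the classical one found in that reference, and the alternative you sketch via the special adjoint functor theorem is likewise standard.
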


The following subcategories associated to a given Serre subcategory $\S$ play important roles to understand the Serre quotient.

\begin{definition}
Let $\A$ be an abelian category and $\S$ its full subcategory.
We define the following full subcategories in $\A$.
\begin{itemize}
\item[(1)] Denote by $\S^{\perp_0}$ the full subcategory of objects $X$ satisfying $\A(S,X)=0$ for any $S\in\S$.
\item[(2)] Denote by $\S^{\perp_1}$ the full subcategory of objects $X$ satisfying $\Ext^1(S,X)=0$ for any $S\in\S$.
\item[(3)] We put $\S^\perp:=\S^{\perp_0}\cap\S^{\perp_1}$ which is called a \textit{perpendicular category of $\S$} in \cite{GL91}.
\end{itemize}
\end{definition}

The next lemma is a basic property of a localization sequence which will be freely used in many places.

\begin{lemma}\label{lem:property_of_localization_seq}
Let $\S\to\A\xto{Q}\A/\S$ be a Serre quotient.
\begin{itemize}
\item[(1)] The quotient functor $Q$ induces an isomorphism $\A(X, Y)\xto{\sim}(\A/\S)(QX, QY)$
for any $X\in\A$ and $Y\in\S^\perp$.
\item[(2)] The quotient functor $Q$ restricts a fully faithful functor $Q|_{\S^\perp}:\S^\perp\hookrightarrow\A/\S$. Moreover, if $Q$ has a right adjoint, then it is an equivalence.
\item[(3)] If $Q$ admits a right adjoint $R$, then $R$ is fully faithful and $\Im R=\S^\perp$ holds.
\end{itemize}
\end{lemma}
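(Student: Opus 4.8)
The three parts are proved in order: (1) directly, and (2), (3) formally from (1) together with the adjunction. For \emph{Part (1)} I would use the standard description of the morphism spaces in the Serre quotient as a filtered colimit
\[
(\A/\S)(QX,QY)\;=\;\varinjlim \A\bigl(X',\,Y/Y'\bigr),
\]
the colimit running over subobjects $X'\subseteq X$ with $X/X'\in\S$ and subobjects $Y'\subseteq Y$ with $Y'\in\S$, the canonical map induced by $Q$ being the structure morphism at the index $(X'=X,\,Y'=0)$ (see e.g.\ \cite{Pop}). Since $Y\in\S^{\perp_0}$, any subobject of $Y$ lying in $\S$ vanishes, so the index for $Y'$ collapses. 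For $X'\subseteq X$ with $X/X'\in\S$, applying $\Hom_\A(-,Y)$ to $0\to X'\to X\to X/X'\to 0$ and using $\A(X/X',Y)=0=\Ext^1_\A(X/X',Y)$ (valid since $X/X'\in\S$ and $Y\in\S^\perp$) shows $\A(X,Y)\to\A(X',Y)$ is an isomorphism; hence all transition maps are isomorphisms and the colimit is $\A(X,Y)$ via $Q$. (Injectivity is immediate without the colimit: $Qf=0$ forces $\Im f\in\S$, and $\Im f\hookrightarrow Y\in\S^{\perp_0}$ forces $\Im f=0$.)

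\emph{Part (2).} Fully faithfulness of $Q|_{\S^\perp}$ is the special case $X,Y\in\S^\perp$ of (1). For the ``moreover'' I would defer to (3): once the counit $\epsilon_C\colon QRC\to C$ is known to be invertible and $RC\in\Im R=\S^\perp$, every object of $\A/\S$ is isomorphic to $Q$ of an object of $\S^\perp$, so $Q|_{\S^\perp}$ is essentially surjective, hence an equivalence.

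\emph{Part (3).} Write $\eta,\epsilon$ for the unit and counit of $Q\dashv R$. \emph{Step 1.} $\Im R\subseteq\S^\perp$: for $S\in\S$ the adjunction gives $\A(S,RC)\cong(\A/\S)(QS,C)=0$ since $QS=0$, so $RC\in\S^{\perp_0}$; for $\Ext^1_\A(S,RC)=0$, represent a class by $0\to RC\xto{i}E\to S\to 0$, observe that $Qi$ is invertible ($Q$ exact, $QS=0$), and let $\psi\colon E\to RC$ be the transpose of $\epsilon_C\circ(Qi)^{-1}\colon QE\to C$; then $\psi\circ i=\id_{RC}$ since both transpose to $\epsilon_C$, so the sequence splits. \emph{Step 2.} For every $X$ there is an exact sequence $0\to\Ker\eta_X\to X\xto{\eta_X}RQX\to\Cok\eta_X\to 0$ with $\Ker\eta_X,\Cok\eta_X\in\S$: the triangle identity $\epsilon_{QX}\circ Q\eta_X=\id$ makes $Q\eta_X$ a split monomorphism, whence $\Ker\eta_X\in\S$, while for the cokernel I would invoke Proposition~\ref{prop:right_adjoint2}, whose exact sequence $S\to X\to Y$ (with $S\in\S$ and $Y\in\S^\perp$) computes the $\S^\perp$-reflection of $X$, so that $RQX\cong Y$ and $\Cok\eta_X\in\S$. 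Applying the exact functor $Q$ to Step 2 shows $Q\eta_X$ is invertible for all $X$; since $Q$ is essentially surjective this makes $\epsilon$ a natural isomorphism, so $R$ is fully faithful. \emph{Step 3.} $\S^\perp\subseteq\Im R$: for $Y\in\S^\perp$ apply Step 2; then $\Ker\eta_Y$ lies in $\S$ and is a subobject of $Y\in\S^{\perp_0}$, hence is $0$, so $\eta_Y$ is monic; the resulting $0\to Y\xto{\eta_Y}RQY\to\Cok\eta_Y\to 0$ has $\Cok\eta_Y\in\S$ and therefore splits because $Y\in\S^{\perp_1}$; but $RQY\in\S^{\perp_0}$ by Step 1 admits no nonzero summand in $\S$, so $\Cok\eta_Y=0$ and $\eta_Y$ is an isomorphism, i.e.\ $Y\in\Im R$. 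Combining the inclusions gives $\Im R=\S^\perp$, and feeding this into Part (2) finishes that statement.

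\emph{Main obstacle.} The only genuinely nonformal point is the cokernel claim in Step 2: that the reflection $RQX$ abstractly supplied by $R$ coincides with the concrete object $Y$ of Proposition~\ref{prop:right_adjoint2}, equivalently that $\Cok\eta_X\in\S$. This cannot follow from $\Im R\subseteq\S^\perp$ alone; it uses the existence-of-reflections content of Proposition~\ref{prop:right_adjoint2} (after noting $\Cok(X\to Y)\in\S$, so that $X\to Y$ is inverted by $Q$) together with a transposition through $Q\dashv R$, invoking the $\Hom$-isomorphism of Part (1) to compare morphisms from $X$ into objects of $\S^\perp$. Everything else in Part (3) is a formal consequence of that identification and the perpendicularity conditions.
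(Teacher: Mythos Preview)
The paper states this lemma without proof, treating it as a standard fact from localization theory (cf.\ \cite{Pop,Gab}). Your argument for Part~(1) via the filtered-colimit description of $\Hom$ in $\A/\S$ is the standard one and is correct; Part~(2) reduces correctly to Part~(3).

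In Part~(3), Step~1 and Step~3 are fine, but Step~2 contains both a gap and an unnecessary detour. The gap is the parenthetical ``after noting $\Cok(X\to Y)\in\S$'': Proposition~\ref{prop:right_adjoint2}(ii) as stated in this paper only asserts an exact sequence $S\to X\to Y$ (exact at $X$) with $S\in\S$ and $Y\in\S^\perp$; it does \emph{not} assert that $X\to Y$ is an epimorphism or that its cokernel lies in $\S$, and indeed the paper's own application in the proof of Proposition~\ref{prop:existence_of_right_adjoint} produces a map $F\to G$ that is not claimed to be epi. So you cannot simply ``note'' this, and without it your identification $RQX\cong Y$ does not go through.

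More to the point, the detour is unnecessary: the extra input you need beyond Step~1 is not Proposition~\ref{prop:right_adjoint2} but Part~(1), which you have already proved. Since $RC\in\S^\perp$ by Step~1, Part~(1) gives $\A(X,RC)\xto{\sim}(\A/\S)(QX,QRC)$ via $f\mapsto Qf$, while the adjunction gives $\A(X,RC)\xto{\sim}(\A/\S)(QX,C)$ via $f\mapsto\epsilon_C\circ Qf$. Composing, post-composition with $\epsilon_C$ is an isomorphism $(\A/\S)(QX,QRC)\to(\A/\S)(QX,C)$ for every $X$; since every object of $\A/\S$ has the form $QX$, Yoneda forces $\epsilon_C$ to be invertible, so $R$ is fully faithful. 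Then $Q\eta_X$ is inverse to $\epsilon_{QX}$, hence $\Ker\eta_X,\Cok\eta_X\in\S$ follow \emph{a~posteriori}, and your Step~3 goes through unchanged.
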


The following lemma says that a special adjoint pair gives rise to a localization sequence.

\begin{proposition}\cite[Ch. 4. Thm. 4.9]{Pop}
\label{prop:exact_functor_with_ff_adjoint}
Let $R:\C\to\A$ be a fully faithful functor between abelian categories.
If $R$ admits an exact left adjoint $Q$, then a sequence of canonical functors $\Ker Q\to\A\to\C$ gives rise to a localization sequence.
\end{proposition}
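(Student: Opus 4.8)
The plan is to exhibit $\S:=\Ker Q$ as a localizing Serre subcategory of $\A$ and to identify the Serre quotient $\A/\S$ with $\C$ via the functor induced by $Q$. First I would note that $\S$ is Serre: for a short exact sequence $0\to X\to Y\to Z\to 0$ in $\A$, exactness of $Q$ gives an exact sequence $0\to QX\to QY\to QZ\to 0$, so $QY=0$ if and only if $QX=0=QZ$. Since $R$ is fully faithful the counit $\epsilon\colon QR\to\id_\C$ is a natural isomorphism, and hence the triangle identity $\epsilon_{QA}\circ Q(\eta_A)=\id_{QA}$ shows that $Q(\eta_A)$ is an isomorphism for every $A\in\A$, where $\eta\colon\id_\A\to RQ$ denotes the unit. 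Applying the exact functor $Q$ to the exact sequence $0\to\Ker\eta_A\to A\xto{\eta_A}RQA\to\Cok\eta_A\to 0$ then gives $\Ker\eta_A,\Cok\eta_A\in\S$, so that $\pi\eta_A$ is an isomorphism in $\A/\S$, where $\pi\colon\A\to\A/\S$ is the quotient functor.

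Next, let $\bar Q\colon\A/\S\to\C$ be the exact functor with $\bar Q\circ\pi=Q$ obtained from the universal property of the Serre quotient. I would then show that $\bar Q$ is an equivalence with quasi-inverse $\pi R$: on the one hand $\bar Q\circ(\pi R)=QR\cong\id_\C$; on the other hand $(\pi R)\circ\bar Q\circ\pi=\pi RQ\cong\pi$ by the previous step, and since $\pi$ is the identity on objects this upgrades to $(\pi R)\circ\bar Q\cong\id_{\A/\S}$.

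Finally, I would check that the quotient functor $\pi$ admits a right adjoint, namely $R\bar Q$: combining $Q\dashv R$, the identity $QA=\bar Q\pi A$, and full faithfulness of $\bar Q$ yields natural isomorphisms $\A(A,R\bar Q X)\cong\C(QA,\bar Q X)=\C(\bar Q\pi A,\bar Q X)\cong(\A/\S)(\pi A,X)$. Proposition \ref{prop:right_adjoint1} then supplies a right adjoint to the inclusion $\S\hookrightarrow\A$ as well, so $\S=\Ker Q$ is localizing; composing $\pi$ with the equivalence $\bar Q$ recovers $Q$, whence $\Ker Q\to\A\xto{Q}\C$ is a localization sequence. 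The step I expect to require the most care is the middle one: passing from the pointwise isomorphisms $\pi\eta_A$ to a genuine natural isomorphism $(\pi R)\circ\bar Q\cong\id_{\A/\S}$, and keeping straight which of the four functors $Q,R,\pi,\bar Q$ each adjunction and triangle identity is being applied to.
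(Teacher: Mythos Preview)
The paper does not supply its own proof of this proposition; it is simply quoted from Popescu \cite[Ch.~4, Thm.~4.9]{Pop}. Your argument is correct and is in fact the standard one. The step you flagged---upgrading the family of isomorphisms $\pi\eta_A\colon\pi A\xrightarrow{\sim}(\pi R\bar Q)(\pi A)$ to a natural isomorphism $\id_{\A/\S}\cong(\pi R)\circ\bar Q$---goes through because the Serre quotient $\pi\colon\A\to\A/\S$ is a localization: every morphism in $\A/\S$ is a finite zigzag of morphisms of the form $\pi(f)$ and formal inverses $\pi(s)^{-1}$ with $\Ker s,\Cok s\in\S$. Naturality of $\pi\eta$ with respect to the $\pi(f)$'s is inherited from naturality of $\eta$, and naturality with respect to $\pi(s)^{-1}$ follows automatically once each $\pi\eta_A$ is invertible. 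Equivalently, one may invoke the $2$-universal property of the Serre quotient: precomposition with $\pi$ induces a bijection on natural transformations between functors out of $\A/\S$, so $(\pi R\bar Q)\circ\pi\cong\id_{\A/\S}\circ\pi$ lifts uniquely. Note that this does \emph{not} require $\pi R\bar Q$ to be exact, only that it be a functor on $\A/\S$, so the fact that $R$ is merely left exact causes no trouble.
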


\subsection{Extriangulated categories}
We recall the definition and some needed properties of extriangulated categories from \cite{NP19}.
Throughout $\C$ denotes an additive category.
The symbol $\C^{\op}$ denotes the opposite category of $\C$.

\begin{definition}
Consider a biadditive functor $\mathbb{E}:\C^{\op}\times\C\to\Ab$.
For any objects $X, Z\in\C$, an element $\delta\in\mathbb{E}(X,Z)$ is called an \textit{$\mathbb{E}$-extension}.
Let $\delta'$ be an element in $\mathbb{E}(X',Z')$.
A \textit{morphism} $(z,x):\delta\to\delta'$ of $\mathbb{E}$-extensions is a pair of morphisms $x\in\C(X,X')$ and $z\in\C(Z,Z')$
with $z_*\delta =x^*\delta'$, where we set $z_*\delta:=\mathbb{E}(X,z)(\delta)$ and $x^*\delta':=\mathbb{E}(x,Z)(\delta')$.
\end{definition}

\begin{definition}
Let $\delta$ and $\delta'$ be the above $\mathbb{E}$-extensions.
By the biadditivity of $\mathbb{E}$, we have a natural isomorphism
$$\mathbb{E}(X\oplus X', Z\oplus Z')\cong \mathbb{E}(X, Z)\oplus \mathbb{E}(X, Z')\oplus \mathbb{E}(X', Z)\oplus \mathbb{E}(X', Z').$$
We denote by $\delta\oplus\delta'$ the element in $\mathbb{E}(X\oplus X', Z\oplus Z')$ corresponding to $(\delta, 0, 0, \delta')$ via this isomorphism.
\end{definition}

\begin{definition}
Let $X$ and $Z$ be objects in $\C$.
Two sequences of the form $Z\xto{g}Y\xto{f}X$
and $Z\xto{g'}Y'\xto{f'}X$ in $\C$
are said to be \textit{equivalent} if there exists an isomorphism $y: Y\to Y'$
which makes the following diagram commutative.
$$
\xymatrix@R=4pt@M=4pt{
&Y\ar[dd]^y_\cong\ar[rd]^f&\\
Z\ar[rd]_{g'}\ar[ru]^g&&X\\
&Y'\ar[ru]_{f'}&
}
$$
We denote the equivalence class of $Z\xto{g}Y\xto{f}X$
by $[Z\xto{g}Y\xto{f}X]$.
\end{definition}

\begin{definition}
Let $\C$ and $\mathbb{E}$ be as above.
\begin{itemize}
\item[(1)] For any $X, Z\in\C$, we denote as $0=[Z\xto{
\tiny\begin{bmatrix}
1\\
0
\end{bmatrix}
}Z\oplus X\xto{
\tiny\begin{bmatrix}
0\ 1
\end{bmatrix}
} X]$.
\item[(2)] For any two classes $[Z\xto{g}Y\xto{f}X]$ and $[Z'\xto{g'}Y'\xto{f'}X']$,
we denote by $[Z\xto{g}Y\xto{f}X]\oplus [Z'\xto{g'}Y'\xto{f'}X']$
the class $[Z\oplus Z'\xto{
g\oplus g'
}Y\oplus Y'\xto{
f\oplus f'
}X\oplus X']$.
\end{itemize}
\end{definition}

\begin{definition}
Let $\mathfrak{s}$ be a correspondence which associates an equivalence class $\mathfrak{s}(\delta)=[Z\xto{g}Y\xto{f}X]$
to any $\mathbb{E}$-extension $\delta\in\mathbb{E}(X,Z)$.
This $\mathfrak{s}$ is called a \textit{realization of $\mathbb{E}$},
if it satisfies the following condition
\begin{align*}\label{condition_realization}
  (*)\ \begin{cases}
    \intertext{Let $\delta\in \mathbb{E}(X,Z)$ and $\delta'\in\mathbb{E}(X', Z')$ be $\mathbb{E}$-extensions with
    $$\mathfrak{s}(\delta)=[Z\xto{g}Y\xto{f}X],\ \ \ \mathfrak{s}(\delta')=[Z'\xto{g'}Y'\xto{f'}X'].
    $$
    Then, for any morphism $(z, x):\delta\to\delta'$, there exists $y\in\C(Y, Y')$ which makes the following diagram commutes:
    $$
    \xymatrix@C=18pt@R=15pt{
    Z\ar[d]_z\ar[r]^{g}&Y\ar[d]^y\ar[r]^{f}&X\ar[d]^x\\
    Z'\ar[r]^{g'}&Y'\ar[r]^{f'}&X'
    }
    $$}
  \end{cases}
\end{align*}
Under the condition $(*)$, we say that the sequence $Z\xto{g}Y\xto{f}X$ \textit{realizes} $\delta$
and the triple $(z, y, x)$ \textit{relaizes} $(z, x)$.
\end{definition}

\begin{definition}
A realization $\mathfrak{s}$ of $\mathbb{E}$ is said to be \textit{additive},
if it satisfies the following conditions:
\begin{itemize}
\item[(i)] For any $X,Z\in\C$, the $\mathbb{E}$-extension $0\in\mathbb{E}(X, Z)$ satisfies
$\mathfrak{s}(0)=0$;
\item[(ii)] For any pair of $\mathbb{E}$-extensions $\delta$ and $\delta'$,
$\mathfrak{s}(\delta\oplus\delta')=\mathfrak{s}(\delta)\oplus\mathfrak{s}(\delta')$
holds.
\end{itemize}
\end{definition}

\begin{definition}
The triple $(\C,\mathbb{E},\mathfrak{s})$ is called an \textit{extriangulated category} if the following conditions are satisfied:
\begin{itemize}
\item[(ET1)] $\mathbb{E}: \C^{\op}\times\C\to\mathsf{Ab}$ is an additive bifunctor;
\item[(ET2)] $\mathfrak{s}$ is an additive realization of $\mathbb{E}$;
\item[(ET3)] Let $\delta\in\mathbb{E}(X,Z)$ and $\delta'\in\mathbb{E}(X',Z')$ be $\mathbb{E}$-extensions realized as
$$\mathfrak{s}(\delta)=[Z\xto{g}Y\xto{f}X],\ \ \ \ 
\mathfrak{s}(\delta')=[Z'\xto{g'}Y'\xto{f'}X'].$$
For any commutative square
$$
\xymatrix@C=18pt@R=15pt{
Z\ar[r]^{g}\ar[d]_z&Y\ar[r]^{f}\ar[d]^y&X\\
Z'\ar[r]^{g'}&Y'\ar[r]^{f'}&X'
}
$$
in $\C$,
there exists a morphism $(z, x):\delta\to\delta'$ satisfying $xf=f'y$.
\item[(ET3)$^{\op}$] Dual of (ET3).
\item[(ET4)] Let $\delta\in\mathbb{E}(X, Z)$ and $\delta'\in\mathbb{E}(A ,Y)$ be $\mathbb{E}$-extensions realized by
$$\mathfrak{s}(\delta)=[Z\xto{g}Y\xto{f}X],\ \ \ \ 
\mathfrak{s}(\delta')=[Y\xto{b}B\xto{a}A].$$
Then there exist an object $C\in\C$, a commutative diagram
$$
\xymatrix@C=18pt@R=15pt{
Z\ar@{=}[d]\ar[r]^g&Y\ar[r]^f\ar[d]^b&X\ar[d]^{b'}\\
Z\ar[r]^{g'}&B\ar[d]^a\ar[r]^{f'}&C\ar[d]^{a'}\\
&A\ar@{=}[r]&A
}
$$
in $\C$, and an $\mathbb{E}$-extension $\delta''\in\mathbb{E}(C, Z)$ realized by $Z\xto{g'}B\xto{f'}C$,
which satisfy the following compatibilities:
\begin{itemize}
\item[(1)] $X\xto{b'}C\xto{a'}A$ realizes $f_*\delta'$;
\item[(2)] $b'^*\delta''=\delta$;
\item[(3)] $g_*\delta''=a'^*\delta'$.
\end{itemize}
\end{itemize}
\item[(ET4)$^{\op}$] Dual of (ET4).
\end{definition}

In the rest, the symbol $(\C, \mathbb{E}, \mathfrak{s})$ denotes an extriangulated category.
We also write $\C:=(\C, \mathbb{E}, \mathfrak{s})$, if there is no confusion.

\begin{definition}
Let $(\C, \mathbb{E}, \mathfrak{s})$ be an extriangulated category.
\begin{itemize}
\item[(1)] A sequence $Z\xto{g}Y\xto{f}X$ is called a \textit{conflation} if it realizes some $\mathbb{E}$-extension $\delta\in\mathbb{E}(X,Z)$.
In this case, we denote the pair $(Z\xto{g}Y\xto{f}X, \delta)$ by $Z\overset{g}{\longrightarrow} Y\overset{f}{\longrightarrow} X\overset{\delta}{\dashrightarrow}$, which is called an \textit{$\mathbb{E}$-triangle}.
\item[(2)] A morphism $g\in\C(Z,Y)$ is called an \textit{inflation} if it admits some conflation $Z\xto{g}Y\to X$.
\item[(3)] A morphism $f\in\C(Y,X)$ is called an \textit{deflation} if it admits some conflation $Z\xto{}Y\xto{f} X$.
\end{itemize}
\end{definition}

Like the case of exact categories, the following hold for any extriangulated category:
The inflations and deflations are closed under composition by (ET4) and (ET4)$^{\op}$, respectively;
The finite coproduct of conflations is also a conflation by the additivity of $\mathbb{E}$ and $\mathfrak{s}$.

Next, we define the pullback (resp. pushout) as follows:
Let $\delta$ be an $\mathbb{E}$-extension with a realization $\mathfrak{s}(\delta)=[Z\xto{g}Y\xto{f}X]$ in $\C$.
For each morphism $x:X'\to X$, we get an $\mathbb{E}$-extension $x^*\delta$ with a realization $\mathfrak{s}(x^*\delta)=[Z\to E\xto{f'} X']$.
Then we have a morphism $(\id_Z, x)$ of $\mathbb{E}$-extensions.
Since $\mathfrak{s}$ is additive, there exists a commutative diagram
$$
\xymatrix@C=18pt@R=18pt{
Z\ar[r]\ar@{=}[d]&E\ar@{}[rd]|{(\textnormal{Pb})}\ar[d]_y\ar[r]^{f'}&X'\ar[d]^x\\
Z\ar[r]&Y\ar[r]^f&X
}
$$
which realizes $(\id_Z, x)$.
The commutative square $(\textnormal{Pb})$ is called a \textit{pullback} of a deflation $f$ along $x$.
Dually, a morphism $z:Z\to Z'$ induces a commutative diagram:
$$
\xymatrix@C=18pt@R=18pt{
Z\ar[d]_z\ar[r]^g\ar@{}[rd]|{(\textnormal{Po})}&Y\ar[d]^{y'}\ar[r]&X\ar@{=}[d]\\
Z'\ar[r]^{g'}&E'\ar[r]&X
}
$$
The commutative square $(\textnormal{Po})$ is called a \textit{pushout} of a inflation $g$ along $z$.

\begin{lemma}\label{lem:pullback}
\cite[Cor. 3.16]{NP19}
For the above pullback $(\textnormal{Pb})$ and pushout $(\textnormal{Po})$, we have $\mathbb{E}$-triangles
\begin{equation*}
E\overset{
\tiny\begin{bmatrix}
f'\\
y
\end{bmatrix}
}{\longrightarrow} X'\oplus Y\overset{
\tiny\begin{bmatrix}
x\ -f
\end{bmatrix}
}{\longrightarrow} X\dashrightarrow \ \ \textnormal{and}\ \ \ 
Z\overset{
\tiny\begin{bmatrix}
g\\
z
\end{bmatrix}
}{\longrightarrow} Y\oplus Z'\overset{
\tiny\begin{bmatrix}
y'\ -g'
\end{bmatrix}
}{\longrightarrow} E'\dashrightarrow, 
\end{equation*}
respectively.
\end{lemma}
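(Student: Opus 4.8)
The plan is to reduce to a single case by duality, and then to realize the required $\mathbb{E}$-triangle as the one produced by axiom (ET4)$^{\op}$ out of a factorization of $[x,-f]$ into composable deflations. Passing to $\C^{\op}$ turns the pushout square (Po) into a pullback square, interchanges inflations and deflations, and reverses $\mathbb{E}$-triangles; hence the pushout assertion is precisely the pullback assertion applied in $\C^{\op}$, and it suffices to produce the first $\mathbb{E}$-triangle.

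To do so I would factor the morphism $[x,-f]$ as the composite
\[
X'\oplus Y\ \xrightarrow{\ \id_{X'}\oplus(-f)\ }\ X'\oplus X\ \xrightarrow{\ [x,\ \id_X]\ }\ X .
\]
The first arrow is a deflation: it is the direct sum of the split conflation $0\to X'\xrightarrow{\id_{X'}}X'\overset{0}{\dashrightarrow}$ with a conflation $Z\xrightarrow{g}Y\xrightarrow{-f}X$ realizing $-\delta$, hence it lies in the $\mathbb{E}$-triangle $Z\xrightarrow{\binom{0}{g}}X'\oplus Y\xrightarrow{\id_{X'}\oplus(-f)}X'\oplus X\overset{(0,-\delta)}{\dashrightarrow}$. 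The second arrow is a split deflation, with $\mathbb{E}$-triangle $X'\xrightarrow{\binom{\id}{-x}}X'\oplus X\xrightarrow{[x,\ \id]}X\overset{0}{\dashrightarrow}$, and the common term $X'\oplus X$ is the cokernel term of the first and the middle term of the second. Applying (ET4)$^{\op}$ to this pair then produces an object $C$, an $\mathbb{E}$-triangle $C\xrightarrow{\mu}X'\oplus Y\xrightarrow{[x,-f]}X\overset{\omega}{\dashrightarrow}$ realizing the composite deflation, a second $\mathbb{E}$-triangle $Z\xrightarrow{\iota}C\xrightarrow{\pi}X'\overset{\omega'}{\dashrightarrow}$, and compatibilities linking $\omega$, $\omega'$ and $\delta$; reading these off one finds $\omega'=x^{*}\delta$ up to sign.

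Now I would identify $C$ with the object $E$ produced by the pullback construction. By definition $E$ sits in the $\mathbb{E}$-triangle $Z\xrightarrow{g'}E\xrightarrow{f'}X'\overset{x^{*}\delta}{\dashrightarrow}$, which has the same two outer terms and the same connecting extension as $Z\xrightarrow{\iota}C\xrightarrow{\pi}X'\overset{\omega'}{\dashrightarrow}$. So by the realization axiom (ET3) there is a morphism of $\mathbb{E}$-triangles between them that is the identity on $Z$ and on $X'$, and the short five lemma for $\mathbb{E}$-triangles (which follows formally from the axioms) forces its middle component to be an isomorphism $\theta\colon E\xrightarrow{\sim}C$ with $\pi\theta=f'$. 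Transporting $C\xrightarrow{\mu}X'\oplus Y\xrightarrow{[x,-f]}X\overset{\omega}{\dashrightarrow}$ along $\theta$ gives an $\mathbb{E}$-triangle $E\xrightarrow{\mu\theta}X'\oplus Y\xrightarrow{[x,-f]}X\dashrightarrow$, and it remains to check $\mu\theta=\binom{f'}{y}$: the first coordinate is $\pi\theta=f'$, and for the second one compares the full commutative diagram furnished by (ET4)$^{\op}$ with the defining diagram of (Pb), using in particular the identity $xf'=fy$. The desired second $\mathbb{E}$-triangle then follows by the duality noted at the start.

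The step I expect to be the main obstacle is the bookkeeping behind the last paragraph: unwinding (ET4)$^{\op}$ into its explicit $3\times3$ diagram, tracking the signs introduced by $-f$, $-\delta$ and the various direct-sum splittings, verifying $\omega'=x^{*}\delta$, and above all confirming $\mu\theta=\binom{f'}{y}$ on the nose — the second coordinate being the delicate point, since deflations in a general extriangulated category need not be epimorphisms, so one cannot simply cancel and must instead use the full compatibility data of (ET4)$^{\op}$. None of this is conceptually hard, but it is where the real work lies.
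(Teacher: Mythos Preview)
The paper does not give its own proof of this lemma; it is stated with a bare citation to \cite[Cor.~3.16]{NP19} and used as a black box. So there is no in-paper argument to compare against.

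Your outline is essentially the standard proof from the cited reference: factor the putative deflation $[x,-f]$ as a composite of two deflations (one split, one a direct-sum shift of $-f$), apply (ET4)$^{\op}$ to that composable pair, and then identify the resulting object with $E$ via the realization of $x^{*}\delta$ and a five-lemma argument. This is correct in strategy and is how Nakaoka--Palu proceed.

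Your self-diagnosed obstacle is the right one, and it is worth sharpening. The morphism $y$ in the square (Pb) is not canonical: it is \emph{some} filler produced by the realization axiom for the morphism of extensions $(\id_Z,x)\colon x^{*}\delta\to\delta$. What your (ET4)$^{\op}$ argument actually yields is an $\mathbb{E}$-triangle $E\xrightarrow{\binom{f'}{y''}}X'\oplus Y\xrightarrow{[x,-f]}X\dashrightarrow$ for \emph{some} $y''$ with $fy''=xf'$ and compatible with the map from $Z$; you cannot expect to pin down $y''=y$ on the nose, precisely because deflations need not be epi. The correct resolution is not to prove equality of $y$ and $y''$, but to observe that the lemma as stated is about the pullback square (Pb) obtained from the realization, and any two fillers $y,y''$ of that square which also commute with the maps from $Z$ yield isomorphic $\mathbb{E}$-triangles (indeed the difference $y-y''$ factors through the inflation $Z\to Y$, and one builds an explicit automorphism of $X'\oplus Y$ intertwining the two candidate inflations). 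With that adjustment your plan goes through; as written, the phrase ``on the nose'' in your last paragraph overstates what you can and need to show.
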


Via the Yoneda lemma, any $\mathbb{E}$-extension $\delta\in\mathbb{E}(X, Z)$ corresponds to a morphism
$\delta_\sharp:\C(,X)\to\mathbb{E}(-,Z)$.

\begin{lemma}\label{lem:ext_seq}
\cite[Cor. 3.12]{NP19}
Let $(\C, \mathbb{E}, \mathfrak{s})$ be an extriangulated category.
For any $\mathbb{E}$-triangle $Z\overset{g}{\longrightarrow} Y\overset{f}{\longrightarrow} X\overset{\delta}{\dashrightarrow}$,
we have an exact sequence
$$\C(-,Z)\xto{g\circ -}\C(-,Y)\xto{f\circ -}\C(-,X)\xto{\delta_\sharp}\mathbb{E}(-,Z)\xto{g\circ -}\mathbb{E}(-,Y)\xto{f\circ -}\mathbb{E}(-,X)$$
in $\Mod\C$.
\end{lemma}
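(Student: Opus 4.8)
The plan is to verify exactness object\nobreakdash-wise, which is legitimate since a sequence in $\Mod\C$ is exact iff it is exact after evaluating at every $W\in\C$. So I fix $W\in\C$ and examine
\[\C(W,Z)\xto{g\circ-}\C(W,Y)\xto{f\circ-}\C(W,X)\xto{\delta_\sharp}\mathbb{E}(W,Z)\xto{g\circ-}\mathbb{E}(W,Y)\xto{f\circ-}\mathbb{E}(W,X),\]
aiming for exactness at the four interior terms. I would first dispose of two bookkeeping inputs used throughout: (a) an $\mathbb{E}$-extension is zero precisely when any (equivalently, some) conflation realizing it splits — immediate from (ET2) and the definition of a realization; and (b) the vanishings $fg=0$, $f^*\delta=0$, $g_*\delta=0$ for the $\mathbb{E}$-triangle. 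For (b), $fg=0$ drops out of (ET3)$^{\op}$ applied to $\delta$ and the split $\mathbb{E}$-triangle $Z\to Z\oplus X\to X$; and, once one knows that $g$ is a weak kernel of $f$ and (dually) $f$ a weak cokernel of $g$, the identities $f^*\delta=0$ and $g_*\delta=0$ follow by combining this with Lemma \ref{lem:pullback} (the pullback, resp.\ pushout, of $f$, resp.\ $g$, along itself splits). Bifunctoriality of $\mathbb{E}$ then gives $\delta_\sharp(fk)=k^*(f^*\delta)=0$ and $g_*(h^*\delta)=h^*(g_*\delta)=0$, so all consecutive composites vanish.

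Exactness at $\C(W,Y)$ is the statement that $g$ is a weak kernel of $f$. Given $k\colon W\to Y$ with $fk=0$, I would apply (ET3)$^{\op}$ to the split $\mathbb{E}$-triangle $W\to W\oplus W\to W$ (realizing $0\in\mathbb{E}(W,W)$) and to $\delta$, along the commuting square on the deflation side whose middle vertical is $(k\ 0)\colon W\oplus W\to Y$ and whose right vertical is $0\colon W\to X$; the axiom returns $k'\colon W\to Z$ together with a commuting square on the inflation side that reads exactly $gk'=k$. (The dual statement — $f$ a weak cokernel of $g$ — is obtained the same way using (ET3).) Exactness at $\C(W,X)$: for $h\colon W\to X$ with $h^*\delta=0$, form the pullback of the deflation $f$ along $h$; its top row $Z\to E\xto{f'}W$ realizes $h^*\delta=0$ and hence splits, so for a section $s$ of $f'$ and the comparison map $y\colon E\to Y$ (with $fy=hf'$) the morphism $ys\colon W\to Y$ satisfies $f(ys)=h$.

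For exactness at $\mathbb{E}(W,Z)$: given $\varepsilon\in\mathbb{E}(W,Z)$ with $g_*\varepsilon=0$, realize $\varepsilon$ by $Z\xto{p}V\xto{q}W$; the pushout of $p$ along $g$ realizes $g_*\varepsilon=0$, hence splits, yielding $v\colon V\to Y$ with $vp=g$, and (ET3) applied to the $\varepsilon$-triangle and $\delta$ along the inflation-side square $(\id_Z,v)$ produces $x\colon W\to X$ with $\delta_\sharp(x)=x^*\delta=\varepsilon$. For exactness at $\mathbb{E}(W,Y)$: given $\eta\in\mathbb{E}(W,Y)$ with $f_*\eta=0$, realize $\eta$ by $Y\xto{p}V\xto{q}W$ and apply (ET4) to $\delta$ and $\eta$; since $f_*\eta=0$, the $\mathbb{E}$-triangle $X\xto{b'}C\xto{a'}W$ of part (1) splits, so $C\cong X\oplus W$ with $b',a'$ the canonical inclusion and projection, and under the resulting decomposition $\mathbb{E}(C,Z)\cong\mathbb{E}(X,Z)\oplus\mathbb{E}(W,Z)$ the compatibilities $(b')^*\delta''=\delta$ and $g_*\delta''=(a')^*\eta$ (together with $g_*\delta=0$) force the $\mathbb{E}(W,Z)$-component $\varepsilon$ of $\delta''$ to satisfy $g_*\varepsilon=\eta$.

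I expect the work to be routine once the auxiliary squares are set up; the key idea is the choice of auxiliary square in (ET3)$^{\op}$ for the weak-kernel step (feeding $k$ in as half of the comparison map), and the fiddliest bookkeeping is matching the summand decompositions via the biadditivity of $\mathbb{E}$ in the (ET4) step.
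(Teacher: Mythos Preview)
The paper does not supply its own proof of this lemma; it is stated without proof as a standard fact about extriangulated categories, taken from \cite{NP19} (where it is Proposition~3.3 and Corollary~3.12). Your argument is correct and is essentially the one given there: verify exactness at each interior term object-wise using (ET3), (ET3)$^{\op}$, pullback/pushout, and (ET4) in turn.

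One small correction concerns your point~(a). The forward implication ($\delta=0$ $\Rightarrow$ its realization splits) is indeed immediate from $\mathfrak{s}(0)=0$ in (ET2), and this is all you need for Steps~2--4. But you also invoke the converse (split $\Rightarrow$ $\delta=0$) when you deduce $f^*\delta=0$ and $g_*\delta=0$ from the observation that the self-pullback of $f$ and the self-pushout of $g$ split. That converse is \emph{not} immediate from (ET2) alone: one needs a short application of (ET3) (or its dual), feeding in the section or retraction as part of a comparison map to the split triangle and reading off the vanishing of the extension from the resulting morphism of $\mathbb{E}$-extensions. This is Corollary~3.5 in \cite{NP19}; once you cite or sketch that, your logical chain (first $fg=0$, then weak kernel/cokernel, then the self-pullback/pushout split, then $f^*\delta=g_*\delta=0$) is clean and free of circularity.
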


The extriangulated category is a simultaneous generalization of exact categories and triangulated categories.
We shall use the following terminology.

\begin{example}
An exact (resp. triangulated) structure in an additive category $\C$ can give rise to an extriangulated structure.
In this case, we say that the extriangulated category is \textit{exact} (resp. \textit{triangulated}) (see \cite[Prop. 3.22, Example 2.13]{NP19} for details).
\end{example}

We end this section by recalling the following fact for later use.

\begin{proposition}\cite[Cor. 3.18]{NP19}\label{prop:extri_to_exact}
Let $(\C, \mathbb{E}, \mathfrak{s})$ be an extriangulated category, in which any inflation is a monomorphism and any deflation is an epimorphism.
If we let $\mathbb{F}$ be the class of conflations given by the $\mathbb{E}$-triangles, then $(\C,\mathbb{F})$ is an exact category.
\end{proposition}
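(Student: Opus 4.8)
The plan is to verify directly the standard axioms of an exact category for the pair $(\C,\mathbb{F})$: that every member of $\mathbb{F}$ is a kernel--cokernel pair and that $\mathbb{F}$ is closed under isomorphism of such pairs, that each identity morphism is simultaneously an inflation and a deflation, that inflations and deflations are each closed under composition, and that the pushout of an inflation along an arbitrary morphism and the pullback of a deflation along an arbitrary morphism exist and again belong to $\mathbb{F}$.

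The one point at which the hypothesis is genuinely used is the kernel--cokernel property. Given an $\mathbb{E}$-triangle $Z\xto{g}Y\xto{f}X\overset{\delta}{\dashrightarrow}$, Lemma~\ref{lem:ext_seq} supplies the exact sequence $\C(-,Z)\xto{g\circ-}\C(-,Y)\xto{f\circ-}\C(-,X)$ in $\Mod\C$; since $g$ is an inflation, hence a monomorphism by assumption, exactness at $\C(-,Y)$ says exactly that $g$ is a kernel of $f$. Dualizing --- applying Lemma~\ref{lem:ext_seq} in the opposite extriangulated category $\C^{\op}$, with $\mathbb{E}^{\op}(Z,X):=\mathbb{E}(X,Z)$ --- gives exactness of $\C(X,-)\xto{-\circ f}\C(Y,-)\xto{-\circ g}\C(Z,-)$, and as $f$ is a deflation, hence an epimorphism, it follows that $f$ is a cokernel of $g$. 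So each conflation is a kernel--cokernel pair, and the required closure under isomorphism is built into the equivalence-class formulation of $\mathfrak{s}$.

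The remaining axioms I would dispatch as follows. The conflation $Z\xto{\binom{1}{0}}Z\oplus X\xto{[0\ 1]}X$ realizing $0\in\mathbb{E}(X,Z)$ specializes at $X=0$ to exhibit $1_Z$ as an inflation and at $Z=0$ to exhibit $1_X$ as a deflation, while closure of inflations and of deflations under composition is the fact already recorded in the excerpt as a consequence of (ET4) and (ET4)$^{\op}$. For the pushout axiom, take an inflation $g\colon Z\to Y$ inside a conflation realizing $\delta\in\mathbb{E}(X,Z)$ and an arbitrary $z\colon Z\to Z'$, and form the square $(\textnormal{Po})$ of the excerpt, whose leg $g'\colon Z'\to E'$ is the inflation realizing $z_*\delta$. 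By Lemma~\ref{lem:pullback}, the square $(\textnormal{Po})$ sits inside the $\mathbb{E}$-triangle $Z\xto{\binom{g}{z}}Y\oplus Z'\xto{[\,y'\ -g'\,]}E'\dashrightarrow$; since $\binom{g}{z}$ is an inflation (so monic) and $[\,y'\ -g'\,]$ a deflation (so epic), the previous step identifies $[\,y'\ -g'\,]$ with the cokernel of $\binom{g}{z}$, which is precisely the statement that $(\textnormal{Po})$ is an honest pushout square in $\C$; and $g'$ is an inflation by construction. The dual argument, applied to $(\textnormal{Pb})$ and the first $\mathbb{E}$-triangle of Lemma~\ref{lem:pullback}, produces the pullback of a deflation along an arbitrary morphism. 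Assembling all of this yields that $(\C,\mathbb{F})$ is an exact category.

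The step I expect to be the main obstacle is exactly this passage from weak to genuine (co)limits: in an arbitrary extriangulated category the squares $(\textnormal{Pb})$ and $(\textnormal{Po})$ are only weak pullbacks and weak pushouts, and it is the hypothesis that inflations are monic and deflations epic --- routed through Lemma~\ref{lem:pullback} --- that upgrades them to the honest pushouts and pullbacks required by the exact-category axioms. Everything else is either purely formal or already established in the preliminary section.
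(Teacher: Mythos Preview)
The paper does not supply its own proof of this proposition; it is quoted from \cite[Cor.~3.18]{NP19} and used as a black box in the proof of Theorem~\ref{thm:exact_or_abelian}. Your verification is correct and is essentially the argument given in \cite{NP19}: the monomorphism/epimorphism hypothesis, routed through Lemma~\ref{lem:ext_seq} and Lemma~\ref{lem:pullback}, upgrades the weak bicartesian squares $(\textnormal{Po})$ and $(\textnormal{Pb})$ to genuine pushouts and pullbacks, after which the remaining Quillen axioms follow formally from (ET2) and (ET4)/(ET4)$^{\op}$.
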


\section{Auslander's defects over extriangulated categories}\label{sec:defect}
\subsection{The Serre subcategory of defects}
We firstly recall that, for an additive category $\C$, although the subcategory $\mod\C$ is closed under cokernels and extensions in $\Mod\C$, it is not always abelian since it is not necessarily closed under kernels.
In fact, the following lemma is well-known.

\begin{lemma}\label{lem:weak-kernel}\cite[Thm. 1.4]{Fre}
The following are equivalent for an additive category $\C$:
\begin{itemize}
\item[(i)] The category $\C$ admits weak-kernels;
\item[(ii)] The full subcategory $\mod\C$ is an exact abelian subcategory in $\Mod\C$,
that is, it is abelian and the canonical inclusion $\mod\C\hookrightarrow\Mod\C$ is exact.
\end{itemize}
\end{lemma}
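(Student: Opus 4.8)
The plan is to prove the two implications separately; (ii)$\Rightarrow$(i) is straightforward, while for (i)$\Rightarrow$(ii) the core is a ``representable case'' from which the general statement is obtained by bootstrapping. For (ii)$\Rightarrow$(i): given $f\colon X\to Y$ in $\C$, consider $\C(-,f)\colon\C(-,X)\to\C(-,Y)$ in $\mod\C$. Since $\mod\C$ is abelian and $\mod\C\hookrightarrow\Mod\C$ is exact, the kernel $K:=\Ker_{\Mod\C}\C(-,f)$ lies in $\mod\C$, hence is finitely generated; choose an epimorphism $\C(-,W)\twoheadrightarrow K$ and compose it with $K\hookrightarrow\C(-,X)$. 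By the Yoneda lemma this composite is $\C(-,w)$ for a unique $w\colon W\to X$. Then $fw=0$, and if $g\colon Z\to X$ satisfies $fg=0$, then $\C(-,g)$ factors through $K$ and, $\C(-,Z)$ being projective in $\Mod\C$, lifts along $\C(-,W)\twoheadrightarrow K$; hence $g$ factors through $w$, so $w$ is a weak kernel of $f$.

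For (i)$\Rightarrow$(ii): since $\mod\C$ is always closed under cokernels, finite direct sums and extensions in $\Mod\C$, it suffices to show that it is closed under kernels; then $\mod\C$ is abelian and its inclusion into $\Mod\C$, preserving both kernels and cokernels, is exact. I would first treat the representable case. If $w\colon W\to X$ is a weak kernel of $f\colon X\to Y$ and $v\colon V\to W$ a weak kernel of $w$, the weak-kernel property yields $\Im\C(-,w)=\Ker\C(-,f)$ and $\Im\C(-,v)=\Ker\C(-,w)$ in $\Mod\C$, so that
\[\C(-,V)\xrightarrow{\C(-,v)}\C(-,W)\xrightarrow{\C(-,w)}\Ker\C(-,f)\to 0\]
is a projective presentation. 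Hence $\Ker\C(-,f)\in\mod\C$ for every $f$ in $\C$, and consequently so do images and cokernels of morphisms between representables.

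Next I would reduce the general case to this. Let $\phi\colon F\to G$ be a morphism in $\mod\C$ and fix a presentation $\C(-,Y_1)\xrightarrow{\C(-,b)}\C(-,Y_0)\xrightarrow{q}G\to 0$. Form the pullback $\bar F:=F\times_{G}\C(-,Y_0)$ of $\phi$ along $q$. The projection $\rho\colon\bar F\to F$ is an epimorphism (pullback of the epimorphism $q$) with $\Ker\rho\cong\Ker q=\Im\C(-,b)$, which is finitely presented by the representable case; hence $\bar F$ is an extension of $F$ by a finitely presented functor and so lies in $\mod\C$. The other projection $\psi\colon\bar F\to\C(-,Y_0)$ is the pullback of $\phi$, so $\Ker\psi\cong\Ker\phi$. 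Choosing a presentation $\C(-,Z_1)\xrightarrow{\C(-,c)}\C(-,Z_0)\xrightarrow{r}\bar F\to 0$, the Yoneda lemma gives $\psi\circ r=\C(-,z)$ for some $z\colon Z_0\to Y_0$, and $r^{-1}(\Ker\psi)=\Ker\C(-,z)$, which is finitely presented by the representable case. Since $r$ restricts to an epimorphism $\Ker\C(-,z)\twoheadrightarrow\Ker\psi$ whose kernel is $\Ker r=\Im\C(-,c)$, the functor $\Ker\phi\cong\Ker\psi$ is a quotient of the finitely presented functor $\Ker\C(-,z)$ by a finitely generated subfunctor, hence is finitely presented. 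Thus $\mod\C$ is closed under kernels.

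The implication (ii)$\Rightarrow$(i) and the representable case are routine, using only the Yoneda lemma and the projectivity of representables in $\Mod\C$. The main point requiring care is the bootstrapping in the last paragraph: one must carefully track how the properties ``finitely generated'' and ``finitely presented'' propagate through the pullback $\bar F$ and the two chosen presentations, repeatedly invoking that $\mod\C$ is closed under cokernels and extensions in $\Mod\C$, together with the elementary fact that a quotient of a finitely presented functor by a finitely generated subfunctor is again finitely presented (lift a finite generating family of the subfunctor through the projective cover of the ambient functor).
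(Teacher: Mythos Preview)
Your argument is correct. The direction (ii)$\Rightarrow$(i) is handled exactly as one would expect, and for (i)$\Rightarrow$(ii) your two-step strategy (first kernels of maps between representables, then a pullback reduction to that case) is a clean and standard way to establish closure of $\mod\C$ under kernels in $\Mod\C$. The only place a reader might pause is your claim that the kernel of the restricted map $\Ker\C(-,z)\to\Ker\psi$ equals $\Ker r$; this is fine because $\Ker r\subseteq\Ker\C(-,z)$ (indeed $r(x)=0$ forces $\psi r(x)=\C(-,z)(x)=0$), so the intersection $\Ker r\cap\Ker\C(-,z)$ is simply $\Ker r=\Im\C(-,c)$. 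You might make that containment explicit.

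As for comparison with the paper: there is nothing to compare. The paper does not prove this lemma at all; it merely records it with a citation to Freyd's \emph{Representations in abelian categories} \cite[Thm.~1.4]{Fre} and uses it as a black box. Your write-up therefore supplies a self-contained proof where the paper relies on the literature, which is a reasonable thing to include if one wants the exposition to stand on its own.
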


This subsection is devoted to studying basic properties of \textit{effaceable} functors and Auslander's \textit{defects} in $\mod\C$.
So, we assume that \textit{an extriangulated category $\C:=(\C, \mathbb{E}, \mathfrak{s})$ has weak-kernels}.
We shall prove that effaceable functors are nothing other than Auslander's defects and they form a Serre subcategory in $\mod\C$.
To begin, we study the subcategory $\eff\C$ of effaceable functors in $\mod\C$ defined as below (see \cite[p. 30]{Kel90}, \cite[p. 141]{Gr57}):

\begin{definition}\label{def:effaceable}
An object $F\in\mod\C$ is said to be \textit{effaceable} if it satisfies the following condition:
\begin{eqnarray}\label{condition_eff}
  \begin{cases}
    \textnormal{For any}\ X\in\C \ \textnormal{and any}\ x\in F(X),& \\
    \textnormal{there exists a deflation}\ \alpha: Y\twoheadrightarrow X\ \textnormal{such that}\ F(\alpha)(x)=0.
 &
  \end{cases}
\end{eqnarray}
We denote by $\eff\C$ the full subcategory of all effaceable functors in $\mod\C$.
\end{definition}

\begin{lemma}\label{lem:Serre_subcategory}
Let $\C$ be an extriangulated category with weak-kernels.
Then the subcategory $\eff\C$ is a Serre subcategory in $\mod\C$.
\end{lemma}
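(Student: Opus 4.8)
The plan is to verify the Serre-subcategory axioms directly: given a short exact sequence $0\to F'\to F\to F''\to 0$ in $\mod\C$, show that $F\in\eff\C$ if and only if $F', F''\in\eff\C$. Throughout I will use the fact (Lemma \ref{lem:weak-kernel}) that $\mod\C$ is abelian and the inclusion $\mod\C\hookrightarrow\Mod\C$ is exact, so that the sequence is exact evaluated at every object $X\in\C$. First I would record a convenient reformulation of effaceability: a finitely presented $F$ is effaceable if and only if, for every $X\in\C$ and every $x\in F(X)$, there is a deflation $\alpha\colon Y\twoheadrightarrow X$ killing $x$; since deflations are closed under composition by (ET4)$^{\op}$, it is harmless to pass to further deflations, and since finitely many elements $x_1,\dots,x_n$ can be simultaneously killed by taking the composite of the successive pullbacks of their individual deflations (using that a pullback of a deflation along any morphism is again a deflation, cf. the pullback square (Pb) and Lemma \ref{lem:pullback}), one may always assume we kill a whole finitely generated subgroup at once.

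The implication ``$F'$ and $F''$ effaceable $\Rightarrow$ $F$ effaceable'' goes as follows. Fix $X$ and $x\in F(X)$, and let $\bar x\in F''(X)$ be its image. Since $F''$ is effaceable, pick a deflation $\alpha\colon Y\twoheadrightarrow X$ with $F''(\alpha)(\bar x)=0$; then $F(\alpha)(x)$ lies in the image of $F'(Y)\hookrightarrow F(Y)$, say $F(\alpha)(x)=\iota_Y(x')$ for $x'\in F'(Y)$. Now since $F'$ is effaceable, choose a deflation $\beta\colon Z\twoheadrightarrow Y$ with $F'(\beta)(x')=0$. The composite $\alpha\beta\colon Z\twoheadrightarrow X$ is a deflation by (ET4)$^{\op}$, and $F(\alpha\beta)(x)=F(\beta)\bigl(\iota_Y(x')\bigr)=\iota_Z\bigl(F'(\beta)(x')\bigr)=0$ by naturality of $\iota$. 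Hence $F\in\eff\C$.

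For the converse, suppose $F$ is effaceable. The quotient $F''$ is then immediately effaceable: given $\bar x\in F''(X)$, the elements of $F''(X)$ that lift to $F(X)$ are all of them (the map $F(X)\to F''(X)$ is surjective by exactness of evaluation), so lift $\bar x$ to $x\in F(X)$, kill $x$ by a deflation $\alpha$, and the image equation shows $F''(\alpha)(\bar x)=0$. The genuinely delicate point — and the one I expect to be the main obstacle — is showing that the \emph{sub}object $F'$ is effaceable, because an element $x'\in F'(X)$, viewed in $F(X)$, can be killed by a deflation $\alpha\colon Y\twoheadrightarrow X$, and then $F'(\alpha)(x')$ maps to $0$ in $F(Y)$, hence \emph{is} $0$ in $F'(Y)$ by injectivity of $F'(Y)\hookrightarrow F(Y)$. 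So in fact this direction is also formal once one keeps the exactness of evaluation in mind: the subtlety is only that one must know $F'\in\mod\C$, i.e. that $F'$ is again finitely presented, which is exactly what Lemma \ref{lem:weak-kernel} supplies (the kernel of a map of finitely presented modules is finitely presented when $\C$ has weak-kernels). I would therefore organize the proof as: (i) cite Lemma \ref{lem:weak-kernel} to get that $\mod\C$ is abelian with exact embedding, so all three terms are finitely presented and evaluation at each $X$ is exact; (ii) prove closure under subobjects and quotients as above, each in one line using exactness of evaluation; (iii) prove the extension-closure using the two-step deflation built from (ET4)$^{\op}$. The only place real extriangulated input (beyond the weak-kernel hypothesis) enters is the closure of deflations under composition and under pullback, so I would make sure those are cited precisely.
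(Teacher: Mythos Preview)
Your proof is correct and follows essentially the same approach as the paper: the extension-closure argument via a two-step deflation (using closure of deflations under composition) is exactly what the paper does, and your explicit treatment of the subobject and quotient cases fills in what the paper dismisses as ``easily checked by a similar argument.'' The preliminary remark about simultaneously killing finitely many elements via pullbacks is not actually needed anywhere in the proof and could be dropped.
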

\begin{proof}
To show that $\eff\C$ is closed under extensions, 
let $0\to S_2\xto{f} F\xto{g} S_1\to 0$ be an exact sequence in $\mod\C$ with $S_1, S_2\in\eff\C$.
Consider $X\in\C$ and $x\in F(X)$.
Since $S_1\in\eff\C$, for $x_1:=(g_X)(x)$, there exists a deflation $\alpha_1:Y_1\to X$ with $S_1(\alpha_1)(x_1)=0$.
It forces that $x':=F(\alpha_1)(x)$ satisfies $(g_{Y_1})(x')=0$.
Hence $x'$ belongs to $S_2(Y_1)$.
Since $S_2\in\eff\C$, for $x'\in S_2(Y_1)$, there exists a deflation $\alpha_2:Y_2\to Y_1$ with $S_2(\alpha_2)(x')=0$.
It is easily checked that $F(\alpha_2\alpha_1)(x)=0$.
The observation here can be understood by chasing the following commutative diagram with exact rows
$$
\xymatrix@C=18pt@R=18pt{
0\ar[r]&S_2(X)\ar[r]\ar[d]&F(X)\ar[r]^{g_X}\ar[d]_{F(\alpha_1)}&S_1(X)\ar[r]\ar[d]^{S_1(\alpha_1)}&0\\
0\ar[r]&S_2(Y_1)\ar[r]\ar[d]_{S_2(\alpha_2)}&F(Y_1)\ar[r]^{g_{Y_1}}\ar[d]&S_1(Y_1)\ar[r]\ar[d]&0\\
0\ar[r]&S_2(Y_2)\ar[r]&F(Y_2)\ar[r]&S_1(Y_2)\ar[r]&0
}
$$
Since $\alpha_2\alpha_1$ is a deflation and $F(\alpha_2\alpha_1)(x)=0$, we have $F\in\eff\C$.

By a similar argument, it is easily checked that $\eff\C$ is closed under taking factors and subobjects.
\end{proof}

Next we define defects over extriangulated categories and provide a precise connection to effaceable functors.

\begin{definition}\label{def:defect}
Let $Z\overset{}{\longrightarrow} Y\overset{}{\longrightarrow} X\overset{\delta}{\dashrightarrow}$ be an $\mathbb{E}$-triangle in an extriangulated category $\C$.
Then we have an exact sequence $(-,Z)\to (-,Y)\to (-,X)\to \tilde\delta\to 0$ in $\mod\C$.
The functor $\tilde\delta$ is called a \textit{defect of $\delta$}.
We denote by $\dfct\C$ the full subcategory in $\mod\C$ consisting of all functors isomorphic to defects.
\end{definition}

Note that, by Lemma \ref{lem:ext_seq}, the defect $\tilde\delta$ is isomorphic to $\Im\delta_\sharp$.
The following proposition shows that defects are nothing other than effaceable functors.

\begin{proposition}\label{prop:char_of_Defect}
Let $\C$ be an extriangulated category with weak-kernels.
Then we have an equality $\eff\C=\dfct\C$.
In particular, $\dfct\C$ is a Serre subcategory in $\mod\C$.
\end{proposition}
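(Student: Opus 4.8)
The plan is to prove the two inclusions $\dfct\C\subseteq\eff\C$ and $\eff\C\subseteq\dfct\C$ separately, and then invoke Lemma \ref{lem:Serre_subcategory} together with the closure properties of $\mod\C$ to conclude that $\dfct\C=\eff\C$ is a Serre subcategory in $\mod\C$.

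For the inclusion $\dfct\C\subseteq\eff\C$, I would start with an $\mathbb{E}$-triangle $Z\xto{g}Y\xto{f}X\overset{\delta}{\dashrightarrow}$ and its defect $\tilde\delta$, which by the remark after Definition \ref{def:defect} is identified with $\Im\delta_\sharp\subseteq\mathbb{E}(-,Z)$, sitting in the exact sequence $\C(-,Z)\to\C(-,Y)\xto{f\circ-}\C(-,X)\xto{\delta_\sharp}\mathbb{E}(-,Z)$ from Lemma \ref{lem:ext_seq}. Given $W\in\C$ and an element $x\in\tilde\delta(W)$, lift it along the surjection $\C(-,X)(W)=\C(W,X)\twoheadrightarrow\tilde\delta(W)$ to some $w\colon W\to X$, so $x$ is the image of $w^*\delta\in\mathbb{E}(W,Z)$ under the canonical map. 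Pulling back the $\mathbb{E}$-triangle along $w$ gives a deflation $\alpha\colon E\to W$ (the top row of the pullback diagram), and $\alpha^* (w^*\delta)=(w\alpha)^*\delta$; but $w\alpha$ factors through $f$ because the pullback square commutes, so $f\circ-$ hits $w\alpha$, hence by exactness $(w\alpha)^*\delta$ maps to $0$ in $\tilde\delta(E)$, i.e. $\tilde\delta(\alpha)(x)=0$. This shows $\tilde\delta\in\eff\C$.

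For the reverse inclusion $\eff\C\subseteq\dfct\C$, take $F\in\eff\C$ with a presentation $\C(-,Y)\xto{f\circ-}\C(-,X)\xto{p}F\to0$ coming from some $f\colon Y\to X$ (such a presentation exists since $F\in\mod\C$). Consider the element $x:=p_X(\id_X)\in F(X)$; by effaceability there is a deflation $\alpha\colon E\twoheadrightarrow X$ with $F(\alpha)(x)=0$, so $p_E(\alpha)=F(\alpha)(p_X(\id_X))=0$, meaning $\alpha$ lies in the image of $f\circ-$, i.e. $\alpha=f\circ h$ for some $h\colon E\to Y$. Now let $Z\xto{g}E\xto{\alpha}X\overset{\delta}{\dashrightarrow}$ be an $\mathbb{E}$-triangle with deflation $\alpha$ (which exists by definition of deflation), and compare $F$ with the defect $\tilde\delta$: the composite $\C(-,X)\xto{p}F$ and $\C(-,X)\to\tilde\delta$ both have kernels containing $\alpha\circ-$, and one checks using $\alpha=fh$ (so $\Im(f\circ-)\supseteq\Im(\alpha\circ-)$) and the effaceability-driven fact that these images agree as subfunctors of $\C(-,X)$, giving $F\cong\tilde\delta$. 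The slightly delicate point is verifying that the two subfunctors $\Im(f\circ-)$ and $\Im(\alpha\circ-)$ of $\C(-,X)$ actually coincide; I would argue that $\Im(\alpha\circ-)\subseteq\Im(f\circ-)$ is immediate from $\alpha=fh$, while the opposite inclusion follows because any $\phi\colon W\to Y$ gives $f\phi$, and effaceability of $F$ applied pointwise forces every section of $F$ to be killed by deflations that factor through $\alpha$-type deflations — more cleanly, one shows $\Ker p=\Ker(\C(-,X)\to\tilde\delta)$ directly by a diagram chase in $\Mod\C$.

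The main obstacle I anticipate is exactly this last identification: matching the presentation of an arbitrary effaceable $F$ with the canonical presentation of a defect. Effaceability only gives, for each object and each section, \emph{some} deflation killing it, and one must assemble these local choices into a single deflation whose induced map presents $F$ — this uses that $F$ is finitely presented (so generated by finitely many sections, and one can take a common refinement of deflations using that deflations compose, via (ET4)) and that $\mathbb{E}$-triangles with a prescribed deflation exist. Once $F\cong\Cok(\C(-,Y)\to\C(-,E))$-style bookkeeping is in place, identifying the cokernel with $\tilde\delta$ via Lemma \ref{lem:ext_seq} is routine. Finally, since $\eff\C$ is a Serre subcategory of $\mod\C$ by Lemma \ref{lem:Serre_subcategory}, the equality $\dfct\C=\eff\C$ immediately yields that $\dfct\C$ is a Serre subcategory, proving the last sentence of the proposition.
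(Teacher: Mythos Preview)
Your argument for $\dfct\C\subseteq\eff\C$ via pullback is correct and is exactly what the paper does. The gap is in the converse.

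For $\eff\C\subseteq\dfct\C$ you do obtain an epimorphism $\tilde\delta\twoheadrightarrow F$ (from $\Im(\alpha\circ-)\subseteq\Im(f\circ-)$), but your claim that this is an \emph{isomorphism} is false in general, and neither the ``diagram chase'' nor the ``assembling local deflations'' sketch rescues it. A concrete counterexample already appears in $\C=\Ab$: take $Y=\mathbb{Z}/4$, $X=\mathbb{Z}/2$, $f$ the quotient map, so $F=\Cok((-,f))$ is the defect of $0\to\mathbb{Z}/2\to\mathbb{Z}/4\to\mathbb{Z}/2\to 0$ and is effaceable. Effaceability at $[\id_X]$ allows the choice $\alpha\colon\mathbb{Z}\to\mathbb{Z}/2$ (reduction mod $2$), which factors through $f$ via the quotient $h\colon\mathbb{Z}\to\mathbb{Z}/4$. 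But then $\Im((-,\alpha))(\mathbb{Z}/4)=0$ since $\Hom(\mathbb{Z}/4,\mathbb{Z})=0$, whereas $\Im((-,f))(\mathbb{Z}/4)=\Hom(\mathbb{Z}/4,\mathbb{Z}/2)$; hence $\tilde\delta_\alpha(\mathbb{Z}/4)\cong\mathbb{Z}/2\neq 0=F(\mathbb{Z}/4)$. So your chosen $\alpha$ need not present $F$. Your remark about ``finitely many sections'' does not help either: $F$ is already generated by the single section $[\id_X]$, so there is nothing to refine.

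The paper does not attempt to identify $F$ with a single defect directly. Instead it uses the epimorphism $\tilde\delta\twoheadrightarrow F$ you found, observes that its kernel $K$ again lies in $\eff\C$ (Serre), repeats the construction to get an epimorphism $\tilde\delta'\twoheadrightarrow K$, and hence an exact sequence $\tilde\delta'\to\tilde\delta\to F\to 0$. The conclusion $F\in\dfct\C$ then follows from a separate, nontrivial lemma (Lemma~\ref{lem:closed_under_cokernels}) showing that $\dfct\C$ is closed under cokernels in $\mod\C$; its proof is an (ET4)-style manipulation comparing two conflations via a pullback square. That closure lemma is the missing idea in your proposal.
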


To prove this proposition, we shall use the following lemma.

\begin{lemma}\label{lem:closed_under_cokernels}
The subcategory $\dfct\C$ is closed under taking kernels and cokernels in $\mod\C$.
\end{lemma}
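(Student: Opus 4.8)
The plan is to reduce to a morphism $\phi\colon\tilde\delta\to\tilde\delta'$ of defects attached to $\mathbb{E}$-triangles $Z\xrightarrow{g}Y\xrightarrow{f}X\overset{\delta}{\dashrightarrow}$ and $Z'\xrightarrow{g'}Y'\xrightarrow{f'}X'\overset{\delta'}{\dashrightarrow}$, and to exhibit both $\Cok\phi$ and $\Ker\phi$ as cokernels of morphisms of the form $(-,d)$ with $d$ a deflation; by Lemma \ref{lem:ext_seq} any such cokernel is a defect. First I would lift $\phi$: since $(-,X)$ is projective in $\mod\C$ and the canonical maps $(-,X)\twoheadrightarrow\tilde\delta$, $(-,X')\twoheadrightarrow\tilde\delta'$ are epimorphisms, there is $x\colon X\to X'$ with $(-,x)$ lifting $\phi$. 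The composite $(-,Y)\xrightarrow{(-,f)}(-,X)\to\tilde\delta$ vanishes, hence so does $(-,Y)\xrightarrow{(-,xf)}(-,X')\to\tilde\delta'$, so $xf=f'h$ for some $h\colon Y\to Y'$, and then $(\textnormal{ET3})^{\op}$ yields $z\colon Z\to Z'$ with $z_*\delta=x^*\delta'$ and $g'z=hg$. A short computation with $\delta_\sharp$ shows that, regarding $\tilde\delta\subseteq\mathbb{E}(-,Z)$ and $\tilde\delta'\subseteq\mathbb{E}(-,Z')$, the map $\phi$ is the restriction of $\mathbb{E}(-,z)$; in particular $\Im\phi=\widetilde{z_*\delta}=\widetilde{x^*\delta'}$.

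For the cokernel, a diagram chase in $\mod\C$ identifies $\Cok\phi$ with $(-,X')\big/\bigl(\Im(-,x)+\Im(-,f')\bigr)$, which is the cokernel of $(-,[\,x\ {-f'}\,])$. Now $[\,x\ {-f'}\,]\colon X\oplus Y'\to X'$ is precisely the deflation of the $\mathbb{E}$-triangle that Lemma \ref{lem:pullback} attaches to the pullback of $f'$ along $x$, so $\Cok\phi\in\dfct\C$.

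For the kernel, I would take the pushout of $g$ along $z$. This gives a realization $Z'\xrightarrow{g_1}E_1\xrightarrow{f_1}X$ of $z_*\delta=x^*\delta'$ together with a morphism of $\mathbb{E}$-triangles $(z,y',\id_X)\colon\delta\to z_*\delta$, so that $f_1y'=f$ and $g_1z=y'g$. Thus $\tilde\delta$ and $\widetilde{z_*\delta}$ are presented as $(-,X)/\Im(-,f)$ and $(-,X)/\Im(-,f_1)$, with $\Im(-,f)\subseteq\Im(-,f_1)$ because $f=f_1y'$, and the canonical epimorphism $\tilde\delta\twoheadrightarrow\Im\phi=\widetilde{z_*\delta}$ is the induced quotient map; hence $\Ker\phi\cong\Im(-,f_1)/\Im(-,f)$. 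Using the exact sequence $(-,Z')\to(-,E_1)\xrightarrow{(-,f_1)}(-,X)$ of Lemma \ref{lem:ext_seq} together with $f=f_1y'$, this is isomorphic to $(-,E_1)\big/\bigl(\Im(-,g_1)+\Im(-,y')\bigr)$, i.e. to the cokernel of $(-,[\,y'\ g_1\,])$ with $[\,y'\ g_1\,]\colon Y\oplus Z'\to E_1$. By Lemma \ref{lem:pullback} applied to the pushout, $[\,y'\ {-g_1}\,]$ is a deflation, hence so is $[\,y'\ g_1\,]$; therefore $\Ker\phi\in\dfct\C$. Finally, since $\mod\C$ is an exact abelian subcategory of $\Mod\C$ (Lemma \ref{lem:weak-kernel}), kernels and cokernels in $\mod\C$ agree with those computed in $\Mod\C$, and the lemma follows.

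The delicate step is the kernel. A priori $\Ker\phi$ only presents as the subquotient $\Im(-,f_1)/\Im(-,f)$ of $(-,X)$, equivalently as the cokernel of $(-,[\,y'\ g_1\,])$, and the morphism $[\,y'\ g_1\,]\colon Y\oplus Z'\to E_1$ has no obvious reason to be a deflation, since the analogue of Quillen's ``obscure axiom'' is not available for extriangulated categories. Constructing $z$ first via $(\textnormal{ET3})^{\op}$ and obtaining $E_1$ as a pushout is exactly what turns $[\,y'\ {-g_1}\,]$ into the deflation part of the $\mathbb{E}$-triangle of Lemma \ref{lem:pullback}; this is the one point where $(\textnormal{ET3})^{\op}$ and the pushout description of $z_*\delta$ are essential.
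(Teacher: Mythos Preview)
Your proof is correct and follows the same strategy as the paper: exhibit the cokernel as the defect of the Mayer--Vietoris conflation $E\to X\oplus Y'\xrightarrow{[x\ -f']}X'$ arising from the pullback (Lemma~\ref{lem:pullback}), and treat the kernel via the dual pushout construction. The paper dispatches the kernel case with the single word ``dually''; your explicit use of (ET3)$^{\op}$ to produce $z$, followed by the pushout of $g$ along $z$ and the identification $\Ker\phi\cong\Cok(-,[y'\ g_1])$, is precisely what that word conceals and is a clean way to make it rigorous.
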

\begin{proof}
Let $\delta$ and $\delta'$ be $\mathbb{E}$-extensions with realizations
$$\mathfrak{s}(\delta)=[Z\xto{g}Y\xto{f}X]\ \ \textnormal{and}\ \ \mathfrak{s}(\delta')=[Z'\xto{g'}Y'\xto{f'}X'].$$
Consider a morphism $\alpha: \tilde\delta'\to\tilde\delta$ in $\dfct\C$.
We shall show that $\Cok\alpha$ still belongs to $\dfct\C$.
By the Yoneda lemma, the morphism $\alpha$ induces a morphism between presentations of $\tilde\delta$ and $\tilde\delta'$, and hence we have the following commutative diagram
$$
\xymatrix@C=18pt@R=15pt{
Z'\ar[r]^{g'}&Y'\ar[r]^{f'}\ar[d]^y&X'\ar[d]^x\\
Z\ar[r]^g&Y\ar[r]^f&X
}
$$
in $\C$.
By taking the pullback $(\textnormal{Pb})$ of the deflation $f$ along $x$,
due to Lemma \ref{lem:pullback},
we get a conflation $\delta'':E\xto{\tiny\begin{bmatrix}
-y_1\\
a
\end{bmatrix}} Y\oplus X'\xto{[f\ x]} X$.
Hence, we have a morphism $y_2:Y'\to E$ which makes the following diagram commutative:
$$
\xymatrix{
Y'\ar@/^15pt/[rrd]^{f'}\ar@/_15pt/[rdd]_y\ar@{..>}[rd]|{y_2}&&\\
&E\ar[r]^a\ar[d]_{y_1}\ar@{}[rd]|{(\textnormal{Pb})}&X'\ar[d]^x\\
&Y\ar[r]^f&X
}
$$
Moreover, we construct the following commutative diagram
$$
\xymatrix@C=18pt@R=18pt{
Z'\ar[r]^{g'}&Y'\ar[r]^{f'}\ar[d]_{y_2}&X'\ar@{=}[d]\\
Z\ar[r]^b\ar@{=}[d]&E\ar[r]^a\ar[d]_{y_1}\ar@{}[rd]|{(\textnormal{Pb})}&X'\ar[d]^x\\
Z\ar[r]^g\ar[d]_{-b}&Y\ar[r]^f\ar[d]^{
\tiny\begin{bmatrix}
1\\
0
\end{bmatrix}
}
&X\ar@{=}[d]\\
E\ar[r]_{
\tiny\begin{bmatrix}
-y_1\\
a
\end{bmatrix}\ \ 
}&Y\oplus X'\ar[r]_{\ \ [f\ x]}&X
}
$$
with the rows being conflations, which induces the following commutative diagram
$$
\xymatrix@C=18pt@R=18pt{
(-,Z')\ar[r]^{(-,g')}&(-,Y')\ar[r]^{(-,f')}\ar[d]_{(-,y)}&(-,X')\ar[d]^{(-,x)}\ar[r]^p&\tilde\delta'\ar[r]\ar[d]^\alpha&0\\
(-,Z)\ar[r]^{(-,g)}\ar[d]_{(-,-b)}&(-,Y)\ar[r]^{(-,f)}\ar[d]_{(-,
\tiny\begin{bmatrix}
1\\
0
\end{bmatrix})
}
&(-,X)\ar@{=}[d]\ar[r]&\tilde\delta\ar[r]\ar[d]&0\\
(-,E)\ar[r]_{}&(-,Y\oplus X')\ar[r]_{}&(-,X)\ar[r]&\tilde\delta''\ar[r]\ar[d]&0\\
&&&0&
}
$$
in $\mod\C$ with exact rows.
As applications of the snake lemma, we obtain an exact sequence
$(-,X')\xto{\alpha p}\tilde\delta\to\tilde\delta''\to 0$.
Since $p$ is an epimorphism, we have $\Cok\alpha\cong\tilde\delta''$.
This shows that $\dfct\C$ is closed under taking cokernels.
Dually, it can be checked that $\dfct\C$ is closed under taking kernels.
\end{proof}

We remark that a similar proof appears in \cite[Thm. A.2]{Eno18} for exact categories.
Now we are in position to prove Proposition \ref{prop:char_of_Defect}.

\begin{proof}[Proof of Proposition \ref{prop:char_of_Defect}]
To show $\eff\C\subseteq\dfct\C$, let $S$ be an object in $\eff\C$.
Since $S\in\mod\C$, there exists an epimorphism $(-,X)\xto{x}S\to 0$.
Thanks to the Yoneda Lemma, we regard $x$ as an element in $S(X)$.
Since $S$ satisfies the condition (\ref{condition_eff}), we get a conflation $Z\to Y\xto{f} X$ such that $S(f)(x)=0$.
Let $\delta$ be an $\mathbb{E}$-extension with a realization $\mathfrak{s}(\delta)=[Z\to Y\to X]$.
We consider the defect of $\delta$, namely, there exists an exact sequence
$(-,Z)\to (-,Y)\xto{(-,f)} (-,X)\to \tilde\delta\to 0$ in $\mod\C$.
Since $S(f)(x)=0$ is equivalent to that the composed morphism $(-,Y)\xto{(-,f)}(-,X)\xto{x}S$ is zero,
we have thus obtained the following commutative diagram
$$
\xymatrix@C=18pt@R=15pt{
(-,Z)\ar[r]&(-,Y)\ar[rd]_0\ar[r]^{(-,f)}&(-,X)\ar@{->>}[d]^x\ar@{->>}[r]&\tilde\delta\ar[r]\ar@{.>>}[dl]&0\\
&&S&&
}
$$
By the commutativity, the above dotted arrow from $\tilde\delta$ to $S$ is an epimorphism.
Thus we have an exact sequence $0\to K\to\tilde\delta\to S\to 0$ in $\mod\C$.
Since $\eff\C$ is a Serre subcategory, we get $K\in\eff\C$.
By a similar argument to the above, there exists a defect $\tilde\delta'$ and an epimorphism $\tilde\delta'\to K\to 0$.
This induces an exact sequence $\tilde\delta'\to\tilde\delta\to S\to 0$.
Thanks to Lemma \ref{lem:closed_under_cokernels}, we conclude $S\in\dfct\C$.

To show the converse $\eff\C\supseteq\dfct\C$, let $\delta$ be an $\mathbb{E}$-extension with a realization $\mathfrak{s}(\delta)= [X''\xto{g}X'\xto{f} X]$ and consider its defect $\tilde\delta$, that is, there exists an exact sequence
$$(-,X'')\to (-,X')\to (-,X)\xto{p} \tilde\delta\to 0$$
in $\mod\C$.
To show $\tilde\delta\in\eff\C$, fix an object $Y\in\C$ and an element $y\in \tilde\delta(Y)$.
Since $p_Y:(Y,X)\to \tilde\delta(Y)$ is an epimorphism, we get $h\in (Y,X)$ such that $p_Y(h)=y$.
By taking the pullback of $f$ along $h$, we have the following commutative diagram with the rows being conflations:
$$
\xymatrix@C=18pt@R=18pt{
X''\ar[r]^g&X'\ar@{}[rd]|{(\textnormal{Pb})}\ar[r]^f&X\\
X''\ar[r]\ar@{=}[u]&Y'\ar[r]^{\alpha}\ar[u]&Y\ar[u]_h
}
$$
By using the Yoneda lemma, we can check that $\tilde\delta(\alpha)(y)=0$,
which says $\tilde\delta\in\eff\C$.
We have thus obtained $\eff\C=\dfct\C$.
\end{proof}

By the discussion so far, we get a Serre subcategory $\eff\C=\dfct\C$ in $\mod\C$.
Thus we have a Serre quotient of $\mod\C$ relative to $\dfct\C$.
Since it is basic to study the perpendicular category $(\dfct\C)^\perp$ to understand the Serre quotient,
we shall show that it coincides with the subcategory of left exact functors in $\mod\C$.

\begin{definition}
Let $\A$ and $(\C,\mathbb{E},\mathfrak{s})$ be an abelian category and an extriangulated category, respectively.
A contravariant functor $F:\C\to\A$ is said to be
\begin{itemize}
\item[(1)] \textit{half-exact}, if $F$ sends a conflation $X\to Y\to Z$ to an exact sequence $FZ\to FY\to FX$;
\item[(2)] \textit{left exact}, if $F$ is half-exact and sends a deflation $Y\to Z$ to a monomorphism $FZ\to FY$;
\item[(3)] \textit{right exact}, if $F$ is half-exact and sends an inflation $X\to Y$ to an epimorphism $FY\to FX$;
\item[(4)] \textit{exact}, if $F$ is left exact and right exact.
\end{itemize}
We denote by $\lex\C$ (resp. $\Lex\C$) the full subcategory of all left exact functors in $\mod\C$ (resp. $\Mod\C$).
\end{definition}

Let us remark that, if $\C$ is a triangulated category, the left exact functors should be zero.
In fact, any morphism in $\C$ is an inflation as well as a deflation.
So, a left exact functor sends any morphism to an isomorphism, in particular, it sends each triangle $X\xto{f} Y\xto{g} Z\to X[1]$ to an exact sequence $0\to F(X)\xto{F(f)} F(Y)\xto{F(g)} F(Z)$ with $F(f), F(g)$ being isomorphisms. 
The exactness forces $F(X)=F(Y)=F(Z)=0$.

The following shows that the perpendicular category of $\dfct\C$ coincides with $\lex\C$.

\begin{proposition}\label{prop:char_of_perp}
Let $\C$ be an extriangulated category with weak-kernels.
The following assertions hold.
\begin{itemize}
\item[(1)] Let $F\in\mod\C$. Then, $F$ sends deflations to monomorphisms if and only if $F\in(\dfct\C)^{\perp_0}$.
\item[(2)] We have an equality $(\dfct\C)^\perp =\lex\C$.
\end{itemize}
\end{proposition}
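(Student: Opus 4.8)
The plan is to deduce both assertions from a single homological computation: the effect of $\Hom_{\mod\C}(-,F)$ and $\Ext^1_{\mod\C}(-,F)$ on the defining presentation of a defect. So I would fix an $\mathbb{E}$-triangle $Z\xto{g}Y\xto{f}X\overset{\delta}{\dashrightarrow}$ and recall from Definition~\ref{def:defect} the exact sequence $(-,Z)\xto{(-,g)}(-,Y)\xto{(-,f)}(-,X)\to\tilde\delta\to 0$ in $\mod\C$, observing that exactness at $(-,Y)$ is precisely the exactness supplied by Lemma~\ref{lem:ext_seq}; in particular $g$ is a weak-kernel of $f$, so $fg=0$. Since $\C$ has weak-kernels, $\mod\C$ is abelian with enough projectives, namely the representable functors (Lemma~\ref{lem:weak-kernel}). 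Applying $\Hom_{\mod\C}(-,F)$ to the presentation and using the Yoneda isomorphism $\Hom_{\mod\C}((-,T),F)\cong F(T)$ — under which the map induced by $(-,f)$ becomes $F(f)$ and that induced by $(-,g)$ becomes $F(g)$ — a short diagram chase yields
\begin{gather*}
\Hom_{\mod\C}(\tilde\delta,F)\cong\Ker\bigl(F(f)\colon F(X)\to F(Y)\bigr),\\
\Ext^1_{\mod\C}(\tilde\delta,F)\cong\frac{\Ker\bigl(F(g)\colon F(Y)\to F(Z)\bigr)}{\Im\bigl(F(f)\colon F(X)\to F(Y)\bigr)},
\end{gather*}
the second isomorphism because the three-term complex of projectives $(-,Z)\xto{(-,g)}(-,Y)\xto{(-,f)}(-,X)$, which is exact at the middle term and has cokernel $\tilde\delta$, computes $\Ext^1_{\mod\C}(\tilde\delta,F)$ as the homology at $F(Y)$ of $F(X)\xto{F(f)}F(Y)\xto{F(g)}F(Z)$.

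Granting this, part (1) is immediate. If $F$ sends deflations to monomorphisms, then for any defect $\tilde\delta$ the deflation $f$ appearing in its presentation has $\Ker F(f)=0$, hence $\Hom_{\mod\C}(\tilde\delta,F)=0$ and $F\in(\dfct\C)^{\perp_0}$. Conversely, if $F\in(\dfct\C)^{\perp_0}$ and $f\colon Y\to X$ is an arbitrary deflation, I would complete it to a conflation $Z\xto{g}Y\xto{f}X$, pass to the associated defect $\tilde\delta$, and read off $\Ker F(f)\cong\Hom_{\mod\C}(\tilde\delta,F)=0$, so that $F(f)$ is a monomorphism. For part (2), recall that by definition $F\in\lex\C$ precisely when $F$ sends deflations to monomorphisms and $F$ is half-exact, the latter meaning $\Ker F(g)=\Im F(f)$ for every conflation $Z\xto{g}Y\xto{f}X$. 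By part (1) the first condition is equivalent to $F\in(\dfct\C)^{\perp_0}$. Since every conflation underlies some $\mathbb{E}$-triangle and hence produces a defect, while conversely every defect arises from a conflation, the displayed formula for $\Ext^1$ shows that the half-exactness condition is equivalent to $\Ext^1_{\mod\C}(\tilde\delta,F)=0$ for every defect $\tilde\delta$, i.e.\ to $F\in(\dfct\C)^{\perp_1}$. Combining the two equivalences gives $\lex\C=(\dfct\C)^{\perp_0}\cap(\dfct\C)^{\perp_1}=(\dfct\C)^\perp$.

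The one step that needs genuine care is the $\Ext^1$ identification: I must verify that $(-,Z)\xto{(-,g)}(-,Y)\xto{(-,f)}(-,X)$ is a complex of projectives, exact at the middle term by Lemma~\ref{lem:ext_seq} and with cokernel $\tilde\delta$, and that such data suffice to express $\Ext^1_{\mod\C}(\tilde\delta,F)$ as the claimed subquotient of $F(Y)$ by the usual recipe (it is also harmless to compute this $\Ext^1$ in $\Mod\C$ instead, since $\mod\C$ is an exact abelian subcategory). The remaining work — naturality of the Yoneda isomorphism and the identification of the two connecting maps with $F(f)$ and $F(g)$ — is routine diagram chasing, after which both assertions follow simply by unwinding the definition of left-exactness.
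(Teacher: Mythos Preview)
Your proposal is correct and follows essentially the same route as the paper's proof: both rest on applying $\Hom_{\mod\C}(-,F)$ to the defining presentation $(-,Z)\to(-,Y)\to(-,X)\to\tilde\delta\to 0$ and identifying, via Yoneda, the resulting maps with $F(f)$ and $F(g)$. The only difference is organizational: you front-load the identifications $\Hom(\tilde\delta,F)\cong\Ker F(f)$ and $\Ext^1(\tilde\delta,F)\cong\Ker F(g)/\Im F(f)$ as a single homological lemma and then read off both assertions, whereas the paper splits the four-term sequence into two short exact sequences (via the image $G$ of $(-,f)$) and argues each implication by a separate element-level chase; the underlying computation is the same.
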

\begin{proof}
Let $Z\overset{g}{\longrightarrow} Y\overset{f}{\longrightarrow} X\overset{\delta}{\dashrightarrow}$ be an $\mathbb{E}$-triangle in $\C$ and consider an associated exact sequence
\begin{equation}\label{seq:defect1}
(-,Z)\to (-,Y)\to (-,X)\to \tilde\delta\to 0.
\end{equation}

(1) We assume that $F$ sends deflations to monomorphisms,
that is, we have a monomorphism $0\to F(X)\xto{F(f)} F(Y)$.
We shall show that $(\tilde\delta,F)=0$.
Taking a morphism $\alpha\in (\tilde\delta,F)$, we get the following commutative diagram:
$$
\xymatrix@C=18pt@R=25pt{
(-,Z)\ar[r]&(-,Y)\ar[r]^{(-,f)}\ar[rrd]_0&(-,X)\ar[rd]^\beta\ar@{->>}[r]&\tilde\delta\ar[d]^\alpha\ar[r]&0\\
&&&F&
}
$$
By the Yoneda Lemma, we have $F(f)(\beta)=0$.
The injectivity of $F(f)$ forces $\beta=0$. Hence we have $\alpha=0$, showing $F\in (\dfct\C)^{\perp_0}$.

Assume $F\in(\dfct\C)^{\perp_0}$.
To show the injectivity of $F(f)$, consider $x\in F(X)$ satisfying $F(f)(x)=0$.
Since the composed morphism $(-,Y)\xto{(-,f)} (-,X)\xto{x} F$ is zero,
$x$ factors through $\tilde\delta\in\dfct\C$.
Since $(F, \tilde\delta)=0$, we have $x=0$.

(2) Consider an object $F\in\lex\C$, i.e., we have an exact sequence $0\to F(X)\xto{F(f)}F(Y)\xto{F(g)}F(Z)$.
We shall show $\Ext^1(\tilde\delta,F)=0$.
By applying $(-,F)$ to an exact sequence $0\to G\to (-,X)\to \tilde\delta\to 0$ obtained from (\ref{seq:defect1}), we get an exact sequence
$$
0\to (\tilde\delta,F)\to F(X)\xto{\varphi} (G,F)\to \Ext^1(\tilde\delta,F)\to 0
$$
Thus, it is enough to show that the morphism $\varphi$ is surjective.
For any $h\in (G,F)$, we have the following commutative diagram
$$
\xymatrix@C=18pt@R=15pt{
(-,Z)\ar[rr]^{(-,g)}\ar[rrrdd]_0&&(-,Y)\ar[rr]^{(-,f)}\ar@{->>}[rd]_p&&(-,X)\ar@{.>}[ddl]^{h'}\\
&&&G\ar[d]_h\ar@{>->}[ru]&\\
&&&F&
}
$$
In fact, regarding $hp$ as an element in $F(Y)$, we get $F(g)(hp)=0$.
Since $F(X)\xto{F(f)}F(Y)\xto{F(g)}F(Z)$ is exact, we have an element $h'\in F(X)$ which satisfies $F(f)(h')=hp$ and corresponds to the dotted arrow in the above diagram.
Hence we conclude that $\varphi$ is surjective.
Thus $\Ext^1(\tilde\delta, F)=0$.
Hence we have $F\in (\dfct\C)^\perp$.

Fix $F\in (\dfct\C)^\perp$.
To show that $F(X)\xto{F(f)}F(Y)\xto{F(g)}F(Z)$ is exact,
we consider an element $y\in F(Y)$ with $F(g)(y)=0$.
We extract an exact sequence $(-,Z)\xto{(-,g)}(-,Y)\xto{p} G\to 0$ from (\ref{seq:defect1}).
The assumption implies that the composed morphism $y\circ (-,g)$ is zero.
Thus we get a morphism $x: G\to F$ such that $xp=y$, which are depicted as follows:
$$
\xymatrix@C=18pt@R=15pt{
(-,Z)\ar[rd]_0\ar[r]^{(-,g)}&(-,Y)\ar[d]^y\ar[r]^p&G\ar@{.>}[ld]^x\ar[r]&0\\
&F&&
}
$$
Applying the functor $(-,F)$ to the exact sequence $0\to G\xto{\iota} (-,X)\to \tilde\delta\to 0$, we have the following exact sequence
$$0\to (\tilde\delta,F)\to F(X)\xto{\psi}(G,F)\to\Ext^1(\tilde\delta,F).$$
Since $\tilde\delta\in\dfct\C$ and $F\in(\dfct\C)^{\perp}$, the morphism $\psi: F(X)\xto{\sim}(G,F)$ is an isomorphism.
We get $x':=\psi^{-1}(x)\in F(X)$.
Regarding $x'$ as a morphism $(-,X)\to F$, we have $y=x\circ p =(x'\circ \iota)\circ p =x'\circ (-,f)$.
This means $F(f)(x')=y$, showing the exactness at $F(Y)$.
Combining (1),  we conclude that $F$ is left exact.
\end{proof}

The following is our first result which is a generalization of Auslander's formula (\cite{Aus66}, \cite[IV. 4]{ARS}).

\begin{theorem}\label{thm:extri_Auslander's_formula}
Let $(\C,\mathbb{E}, \mathfrak{s})$ be an extriangulated category with weak-kernels.
Then, we have a Serre quotient
\begin{equation}\label{eq:Serre_quot_for_extri}
\xymatrix@C=1.2cm{
\dfct\C\ar[r]^-{{}}
&\mod\C\ar[r]^-{Q}
&\frac{\mod\C}{\dfct\C}.
}
\end{equation}
Moreover, if the quotient functor $Q$ has a right adjoint, then we have a localization sequence
\begin{equation}\label{eq:localiztion_seq_for_extri}
\xymatrix@C=1.2cm{
\dfct\C \ar[r]^-{{}}
&\mod\C\ar[r]^-{Q}\ar@/^1.2pc/[l]^-{}
&\lex\C\ar@/^1.2pc/[l]^{R}}
\end{equation}
where $R$ deotes the canonical inclusion.
\end{theorem}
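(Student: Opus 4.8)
The plan is to build the statement entirely from results already established. First note that since $\C$ has weak-kernels, Lemma \ref{lem:weak-kernel} guarantees that $\mod\C$ is abelian, and Proposition \ref{prop:char_of_Defect} shows that $\dfct\C=\eff\C$ is a Serre subcategory of $\mod\C$. Consequently the Serre quotient $\frac{\mod\C}{\dfct\C}$ is defined, it is abelian, and the quotient functor $Q$ is exact; this yields the Serre quotient \eqref{eq:Serre_quot_for_extri} with no further argument.

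For the second assertion I assume that $Q$ admits a right adjoint $R$. Then Proposition \ref{prop:right_adjoint1} applies, so the inclusion $\dfct\C\hookrightarrow\mod\C$ also admits a right adjoint; that is, $\dfct\C$ is localizing, and we obtain a localization sequence $\dfct\C\to\mod\C\xto{Q}\frac{\mod\C}{\dfct\C}$ in the canonical form of Proposition \ref{prop:right_adjoint1}. What remains is to identify $\frac{\mod\C}{\dfct\C}$ with $\lex\C$ in a way compatible with the functors. For this I invoke Lemma \ref{lem:property_of_localization_seq}(3): the right adjoint $R$ is fully faithful with $\Im R=(\dfct\C)^\perp$. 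By Proposition \ref{prop:char_of_perp}(2) we have $(\dfct\C)^\perp=\lex\C$, so $R$ corestricts to an equivalence $\Phi\colon\frac{\mod\C}{\dfct\C}\xto{\sim}\lex\C$ with $\iota\circ\Phi\cong R$, where $\iota\colon\lex\C\hookrightarrow\mod\C$ is the canonical inclusion (alternatively one may use Lemma \ref{lem:property_of_localization_seq}(2), noting that $Q|_{(\dfct\C)^\perp}$ is fully faithful and, $Q$ having a right adjoint, an equivalence).

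The last step is to transport the localization sequence along $\Phi$. Setting $Q':=\Phi\circ Q\colon\mod\C\to\lex\C$, a routine adjunction computation — using that $\Phi$ is an equivalence and $Q\dashv R$, and writing $R=\iota\circ\Phi$ — gives $Q'\dashv\iota$; moreover $Q'$ is exact since $Q$ is exact and $\Phi$ is an equivalence. Thus the inclusion $\dfct\C\hookrightarrow\mod\C$ has a right adjoint and the functor $Q'\colon\mod\C\to\lex\C$ has right adjoint $\iota$, which is precisely the localization sequence \eqref{eq:localiztion_seq_for_extri} with $R=\iota$ the canonical inclusion. I do not anticipate any genuine obstacle: all the substantive content — that defects form a Serre subcategory and that their perpendicular category equals $\lex\C$ — is already supplied by Propositions \ref{prop:char_of_Defect} and \ref{prop:char_of_perp}, and the remainder is a formal manipulation of adjoint pairs and Serre quotients.
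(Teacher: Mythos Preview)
Your proposal is correct and follows essentially the same approach as the paper: both arguments reduce the first assertion to the fact that $\dfct\C$ is Serre (via Proposition \ref{prop:char_of_Defect}, with Lemma \ref{lem:Serre_subcategory} in the background) and deduce the second from Lemma \ref{lem:property_of_localization_seq} together with the identification $(\dfct\C)^\perp=\lex\C$ of Proposition \ref{prop:char_of_perp}. Your version is more explicit about the transport of the localization sequence along the equivalence $\frac{\mod\C}{\dfct\C}\simeq\lex\C$, but the underlying logic is the same.
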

\begin{proof}
Lemma \ref{lem:Serre_subcategory} and Proposition \ref{prop:char_of_Defect} show that the subcategory $\dfct\C$ is a Serre subcategory in $\mod\C$.
Thus we obtain the Serre quotient (\ref{eq:Serre_quot_for_extri}).
It remains to show that there is an equivalence $\frac{\mod\C}{\dfct\C}\simeq \lex\C$.
This follows from Lemma \ref{lem:property_of_localization_seq} and Proposition \ref{prop:char_of_perp}.
\end{proof}

Auslander's formula (\ref{eq:Auslander's formula}) shows that, if $\C$ is abelian, the quotient functor always has a right adjoint.
However, even if a given category $\C$ is exact, the quotient functor $Q$ does not necessarily have a right adjoint.
The author is grateful to Haruhisa
Enomoto for providing to him an example which shows the last fact.

\begin{example}\label{ex:Enomoto}
Let $R:=k[\![x, y]\!]$ be a ring of formal power series over a commutative field $k$.
We consider
\begin{itemize}
\item the category $\mod R$ of finitely generated modules;
\item the full subcategory $\proj R$ of all finitely generated projectives;
\item the Serre subcategory $\fl R$ of modules of finite length.
\end{itemize}
Then we have the quotient functor $Q:\mod R\to\frac{\mod R}{\fl R}$ which does not have a right adjoint.
In fact, if $Q$ has a right adjoint $R$, we have an equivalence $\frac{\mod R}{\fl R}\simeq (\fl R)^\perp$ by Lemma \ref{lem:property_of_localization_seq}.
Furthermore, it is basic that, for each $R$-module $X$, it belongs to $(\fl R)^\perp$ if and only if it satisfies $\Hom_R(k,X)=0=\Ext^1_R(k,X)$ if and only if $\mathsf{depth}X\geq 2$.
Since the Krull dimension of $R$ is two, we have $(\fl R)^\perp=\proj R$.
This contradicts to the fact that $\proj R$ is not abelian.

Thanks to \cite[Exa. 3.4, Thm. 3.7]{Eno18a}, there exists an exact structure on $\proj R$ which induces an equivalence $\fl R\simeq \dfct (\proj R)$.
Using an equivalence $\mod R\simeq \mod(\proj R)$, we have a Serre quotient $\mod(\proj R)\to\frac{\mod(\proj R)}{\dfct(\proj R)}$ which does not have a right adjoint.
\end{example}

Define the composed functor $E_\C:=Q\mathbb{Y}:\C\to\frac{\mod\C}{\dfct\C}$.
The following gives characterizations for $\C$ to be exact  or abelian via this functor.

\begin{theorem}\label{thm:exact_or_abelian}
Let $(\C, \mathbb{E}, \mathfrak{s})$ be an extriangulated category with weak-kernels.
Then the following hold.
\begin{itemize}
\item[(1)] The functor $E_\C$ is exact and fully faithful if and only if $\C$ is an exact category.
\item[(2)] The functor $E_\C$ is an exact equivalence if and only if $\C$ is an abelian category. If this is the case, we have an equivalence $\C\simeq\lex\C$.
\end{itemize}
\end{theorem}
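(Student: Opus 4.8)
The plan is to deduce both parts from Proposition~\ref{prop:extri_to_exact}, the Yoneda lemma, and the properties of the Serre quotient recorded in Lemma~\ref{lem:property_of_localization_seq} and Proposition~\ref{prop:char_of_perp}. The preliminary fact I would isolate, valid for \emph{any} extriangulated $\C$ with weak-kernels, is that $E_\C$ is always \emph{right exact}: a conflation $Z\xto{g}Y\xto{f}X$ sits in an $\mathbb{E}$-triangle whose defect $\tilde\delta$ lies in $\dfct\C=\Ker Q$, and Definition~\ref{def:defect} gives an exact sequence $\mathbb{Y}(Z)\to\mathbb{Y}(Y)\to\mathbb{Y}(X)\to\tilde\delta\to 0$ in $\mod\C$; applying the exact functor $Q$ kills $\tilde\delta$ and yields an exact sequence $E_\C(Z)\to E_\C(Y)\to E_\C(X)\to 0$. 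Hence the only possible failure of exactness of $E_\C$ is the failure of $E_\C(g)$ to be monic for some inflation $g$.

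For (1), assume first that $\C$ is exact. Then inflations are monomorphisms and deflations are epimorphisms, so every representable $\mathbb{Y}(X)=\C(-,X)$ is a left exact functor; by Proposition~\ref{prop:char_of_perp}(2) the Yoneda embedding factors through $\lex\C=(\dfct\C)^{\perp}$. Composing the fully faithful functors $\mathbb{Y}\colon\C\to\lex\C$ and $Q|_{\lex\C}$ (fully faithful by Lemma~\ref{lem:property_of_localization_seq}(2)) shows $E_\C$ is fully faithful; and since an inflation $g$ is monic in $\C$, the map $\mathbb{Y}(g)$ is monic in $\mod\C$, hence so is $E_\C(g)=Q\mathbb{Y}(g)$, which with the preliminary observation gives exactness of $E_\C$. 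Conversely, assume $E_\C$ is exact and faithful. Exactness forces $E_\C(g)$ monic for every inflation $g$ and $E_\C(f)$ epic for every deflation $f$; faithfulness then transports this back to $\C$: if $gu=gv$ then $E_\C(g)E_\C(u)=E_\C(g)E_\C(v)$, so $E_\C(u)=E_\C(v)$ and $u=v$, whence $g$ is monic, and dually every deflation is epic. Proposition~\ref{prop:extri_to_exact} now says $\C$ is exact.

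For (2), if $\C$ is abelian then in particular it is exact, so by (1) the functor $E_\C$ is exact and fully faithful, and it remains to show it is essentially surjective. Given $F\in\mod\C$, choose a presentation $\mathbb{Y}(A)\xto{\mathbb{Y}(h)}\mathbb{Y}(B)\to F\to 0$, the first map being $\mathbb{Y}(h)$ for a unique $h\in\C(A,B)$ by Yoneda, and factor $h=me$ with $e\colon A\twoheadrightarrow\Im h$ and $m\colon\Im h\rightarrowtail B$ in the abelian category $\C$. Both $e$ and $m$ are parts of conflations, so applying the exact functor $E_\C$ makes $E_\C(e)$ epic and identifies $E_\C(\Cok h)$ with $\Cok(E_\C(m))$; since $E_\C(h)=E_\C(m)E_\C(e)$ and $Q$ is exact we obtain $QF=\Cok(E_\C(h))\cong E_\C(\Cok h)$, so $E_\C$ is essentially surjective and therefore an equivalence. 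Then $E_\C$ is an equivalence factoring as $\C\xto{\mathbb{Y}}\lex\C\to\mod\C/\dfct\C$ with the second functor $Q|_{\lex\C}$ fully faithful, which forces both factors to be equivalences; in particular $\mathbb{Y}\colon\C\to\lex\C$ is an equivalence, so $\C\simeq\lex\C$. For the converse, if $E_\C$ is an exact equivalence, then by (1) $\C$ is exact and, being equivalent to the abelian Serre quotient $\mod\C/\dfct\C$, has abelian underlying category; since $E_\C$ matches the conflations of $\C$ bijectively with the short exact sequences of $\mod\C/\dfct\C$ and is an equivalence of the underlying categories, every short exact sequence in $\C$ is a conflation, so $\C$ is abelian and $\C\simeq\lex\C$ as above.

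The only step that is not purely formal is the essential-surjectivity argument in (2): one must see that dividing $\mod\C$ by the defects collapses every finitely presented module to (the class of) a representable one, and this rests on combining the epi--mono factorisation available in the abelian category $\C$ with the exactness of $E_\C$. A minor point worth flagging is that $\mod\C/\dfct\C$ need not be equivalent to $\lex\C$ in general, since the quotient functor $Q$ may lack a right adjoint (Example~\ref{ex:Enomoto}); but in the abelian case the equivalence $E_\C$ we construct forces $Q|_{\lex\C}$ to be an equivalence as well, so the identification $\mod\C/\dfct\C\simeq\lex\C$, and hence the conclusion $\C\simeq\lex\C$, is unambiguous.
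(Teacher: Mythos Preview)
Your argument for part~(1) and for the right-exactness of $E_\C$ is essentially identical to the paper's (Lemmas~\ref{lem:right_exactness_of_GQ} and~\ref{lem:exact_fully_faithful}). The main divergence is in the ``if'' direction of~(2): the paper simply invokes Auslander's formula (Lemma~\ref{lem:Auslander's_defect}) to obtain at once the identification $\C=\frac{\mod\C}{\dfct\C}=\lex\C$ as subcategories of $\mod\C$, whereas you supply a direct essential-surjectivity argument using the epi--mono factorisation $h=me$ in the abelian category $\C$. Your route is self-contained and avoids appealing to Auslander's result as a black box; the paper's route is shorter but relies on that classical lemma. Both are valid, and your factorisation argument is in fact exactly how one would reprove the relevant piece of Auslander's formula by hand.

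One caution on the ``only if'' of~(2): the paper's proof is the single sentence ``obvious because $\frac{\mod\C}{\dfct\C}$ is abelian'', i.e.\ it only concludes that the \emph{underlying} additive category of $\C$ is abelian. Your further claim that ``$E_\C$ matches the conflations of $\C$ bijectively with the short exact sequences of $\mod\C/\dfct\C$'' is not justified as stated: exactness of $E_\C$ gives a map from conflations to short exact sequences, and the equivalence gives a bijection between short exact sequences on both sides, but you have not shown that every short exact sequence in $\C$ arises from a conflation---that is precisely what is in question. If you want to argue this, one way is to observe that for any short exact sequence $0\to X\to Y\xto{f}Z\to 0$ in the abelian category $\C$, the functor $\Cok(-,f)\in\mod\C$ is killed by $Q$ (since $E_\C$ preserves this sequence), hence lies in $\dfct\C=\eff\C$; but passing from effaceability to ``$f$ is a deflation'' still requires an extra step. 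Since the paper does not attempt this refinement either, your proof is at least as complete as the original; just be aware that the sentence as written is circular.

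Your closing remark about $\lex\C$ versus $\mod\C/\dfct\C$ is well taken and in fact sharper than the paper: the factorisation $E_\C=Q|_{\lex\C}\circ\mathbb{Y}$ with both factors fully faithful, together with $E_\C$ being an equivalence, forces $Q|_{\lex\C}$ to be an equivalence and hence makes the identification $\C\simeq\lex\C$ unambiguous even without assuming a right adjoint to $Q$.
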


In the rest of this subsection, we shall prove Theorem \ref{thm:exact_or_abelian}.

\begin{lemma}\label{lem:right_exactness_of_GQ}
The functor $E_\C$ is right exact.
\end{lemma}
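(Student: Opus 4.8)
The plan is to show that $E_{\C} = Q\mathbb{Y}$ sends conflations to right exact sequences in the Serre quotient $\frac{\mod\C}{\dfct\C}$. Fix an $\mathbb{E}$-triangle $Z\xto{g} Y\xto{f} X\overset{\delta}{\dashrightarrow}$ in $\C$. By Lemma \ref{lem:ext_seq}, applying the Yoneda embedding produces an exact sequence $(-,Z)\xto{(-,g)}(-,Y)\xto{(-,f)}(-,X)\xto{\delta_\sharp}\mathbb{E}(-,Z)$ in $\Mod\C$, and by Definition \ref{def:defect} this restricts to the exact sequence
$$(-,Z)\xto{(-,g)}(-,Y)\xto{(-,f)}(-,X)\xto{p}\tilde\delta\to 0$$
in $\mod\C$, where $\tilde\delta=\Im\delta_\sharp\in\dfct\C$. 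First I would note that the cokernel of $(-,f)$ in $\mod\C$ is precisely $\tilde\delta$.

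Next I apply the quotient functor $Q$, which is exact (it is the Serre quotient functor, see Section \ref{sec:preliminaries}). Exactness of $Q$ preserves the right exact sequence: from $(-,Y)\xto{(-,f)}(-,X)\to\tilde\delta\to 0$ we obtain $Q(-,Y)\xto{Q(-,f)}Q(-,X)\to Q\tilde\delta\to 0$ exact in $\frac{\mod\C}{\dfct\C}$. But $\tilde\delta\in\dfct\C=\Ker Q$, so $Q\tilde\delta=0$, whence $Q(-,f):Q(-,Y)\to Q(-,X)$ is an epimorphism. Since $E_\C(Y)=Q(-,Y)$, $E_\C(X)=Q(-,X)$, and $E_\C(f)=Q(-,f)$, this says exactly that $E_\C(f)$ is an epimorphism.

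It remains to check half-exactness, i.e. that $E_\C(Z)\xto{E_\C(g)}E_\C(Y)\xto{E_\C(f)}E_\C(X)$ is exact at the middle term. For this I again use exactness of $Q$ applied to the three-term exact sequence $(-,Z)\xto{(-,g)}(-,Y)\xto{(-,f)}(-,X)$ in $\mod\C$: exactness of a functor preserves exactness of such a sequence, so $Q(-,Z)\xto{Q(-,g)}Q(-,Y)\xto{Q(-,f)}Q(-,X)$ is exact in the middle, which is the desired half-exactness. Combining the two observations, $E_\C$ sends every conflation to a right exact sequence, so $E_\C$ is right exact. The only mild subtlety — and the point deserving a sentence of care — is that the relevant sequence of representables is genuinely exact \emph{in $\mod\C$} (not merely in $\Mod\C$); this is where the weak-kernel hypothesis enters, via Lemma \ref{lem:weak-kernel}, ensuring $\mod\C$ is an exact abelian subcategory of $\Mod\C$ so that forming the image $\tilde\delta$ and applying the exact functor $Q$ behave as expected.
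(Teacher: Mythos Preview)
Your proof is correct and follows essentially the same approach as the paper's own proof: both use the exact sequence $(-,Z)\to(-,Y)\to(-,X)\to\tilde\delta\to 0$ in $\mod\C$, the fact that $\tilde\delta\in\dfct\C$, and the exactness of the Serre quotient functor $Q$. The paper's version is simply terser, stating only that ``since $\tilde\delta\in\dfct\C$ and $Q$ is exact, the assertion follows,'' whereas you spell out separately that $E_\C(f)$ is an epimorphism and that half-exactness is preserved, and you make explicit the role of the weak-kernel hypothesis via Lemma~\ref{lem:weak-kernel}.
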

\begin{proof}
Let $X\longrightarrow Y\longrightarrow Z\overset{\delta}{\dashrightarrow}$ be an $\mathbb{E}$-triangle in $\C$ which defines a defect $\tilde\delta$ by the exactness of a sequence $(-,X)\to (-,Y)\to (-,Z)\to \tilde\delta\to 0$ in $\mod\C$.
Since $\tilde\delta\in\S$ and the quotient functor $Q$ is exact, the assertion follows.
\end{proof}

\begin{lemma}\label{lem:exact_fully_faithful}
Suppose that $\C$ is an exact category with weak-kernels.
Then, the functor $E_\C:\C\to\frac{\mod\C}{\dfct\C}$ is exact and fully faithful.
\end{lemma}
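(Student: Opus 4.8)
The plan is to supply the two assertions not yet in hand, namely full faithfulness of $E_\C$ and its left exactness, since right exactness is already recorded in Lemma \ref{lem:right_exactness_of_GQ}. Throughout one uses that $\mod\C$ is abelian (Lemma \ref{lem:weak-kernel}), that $\dfct\C$ is a Serre subcategory so that the quotient functor $Q$ is exact (Proposition \ref{prop:char_of_Defect}), and the identification $(\dfct\C)^\perp=\lex\C$ from Proposition \ref{prop:char_of_perp}(2). The decisive observation, and the only place where the exact structure on $\C$ rather than a general extriangulated one is used, is that every representable functor $\mathbb{Y}W=\C(-,W)$ is left exact: given a conflation $Z\xto{g}Y\xto{f}X$ of the exact category $\C$, the morphism $g$ is a monomorphism and $f$ is a cokernel of $g$ in $\C$, so applying $\C(-,W)$ yields an exact sequence $0\to\C(X,W)\to\C(Y,W)\to\C(Z,W)$; that is, $\mathbb{Y}W$ is half-exact and sends deflations to monomorphisms, whence $\mathbb{Y}W\in\lex\C=(\dfct\C)^\perp$.

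For full faithfulness I would then invoke Lemma \ref{lem:property_of_localization_seq}(1) with $\A=\mod\C$ and $\S=\dfct\C$: since $\mathbb{Y}B\in(\dfct\C)^\perp$ for every $B\in\C$, the functor $Q$ induces an isomorphism $\Hom_\C(\mathbb{Y}A,\mathbb{Y}B)\xto{\sim}\frac{\mod\C}{\dfct\C}(Q\mathbb{Y}A,Q\mathbb{Y}B)$ for all $A,B\in\C$. Precomposing with the Yoneda isomorphism $\C(A,B)\xto{\sim}\Hom_\C(\mathbb{Y}A,\mathbb{Y}B)$, whose composite with the previous map is precisely the action of $E_\C=Q\mathbb{Y}$ on morphism sets, shows that $E_\C$ is fully faithful.

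For exactness, take an $\mathbb{E}$-triangle $Z\xto{g}Y\xto{f}X\overset{\delta}{\dashrightarrow}$. By Lemma \ref{lem:ext_seq} together with Definition \ref{def:defect} there is an exact sequence $(-,Z)\xto{(-,g)}(-,Y)\xto{(-,f)}(-,X)\to\tilde\delta\to 0$ in $\mod\C$, and since the inflation $g$ is a monomorphism in the exact category $\C$ the map $(-,g)$ is a monomorphism in $\mod\C$; hence $0\to(-,Z)\to(-,Y)\to(-,X)\to\tilde\delta\to 0$ is exact. Applying the exact functor $Q$ and using $Q\tilde\delta=0$, which holds because $\tilde\delta\in\dfct\C=\Ker Q$, yields a short exact sequence $0\to E_\C Z\to E_\C Y\to E_\C X\to 0$ in $\frac{\mod\C}{\dfct\C}$. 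Thus $E_\C$ carries every $\mathbb{E}$-triangle to a short exact sequence; in particular, recalling also Lemma \ref{lem:right_exactness_of_GQ}, $E_\C$ is exact.

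I expect the genuine content to sit entirely in the first paragraph: once one notices that exactness of $\C$ forces the representables into the perpendicular category $\lex\C$, both the full faithfulness (via Lemma \ref{lem:property_of_localization_seq}(1)) and the left exactness (via the monomorphy of inflations and $Q\tilde\delta=0$) follow essentially formally, so there is no serious obstacle here—only the routine bookkeeping of matching the abstract Serre-quotient statements of Section \ref{sec:preliminaries} to the functor category $\mod\C$.
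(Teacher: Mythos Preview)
Your proof is correct and follows essentially the same route as the paper's: both pivot on the fact that in an exact category the representables lie in $\lex\C=(\dfct\C)^\perp$, then invoke Lemma~\ref{lem:property_of_localization_seq} for full faithfulness and use exactness of $Q$ together with $Q\tilde\delta=0$ (the paper phrases this as ``the conflation gives a projective resolution of $\tilde\delta$'') for exactness of $E_\C$.
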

\begin{proof}
We shall show the exactness of $E_\C$.
Note that a conflation $\delta: X\to Y\to Z$ in $\C$ gives rise to a projective resolution of the defect $\tilde\delta$.
By Lemma \ref{lem:right_exactness_of_GQ}, applying $Q$ to the projective resolution yields an exact sequence $0\to Q(-,X)\to Q(-,Y)\to Q(-,Z)\to 0$ in $\frac{\mod\C}{\dfct\C}$.

We shall show that $E_\C$ is fully faithful.
Since every representable functor in $\mod\C$ is left exact, it belongs to $\lex\C=(\dfct\C)^\perp$.
Due to Lemma \ref{lem:property_of_localization_seq}, the fully faithfulness of $E_\C$ follows.
\end{proof}

We recall the following result from \cite[p. 205]{Aus66}.

\begin{lemma}[Auslander's formula]\label{lem:Auslander's_defect}
Suppose that $\C$ is abelian.
Then, the Yoneda embedding $\mathbb{Y}:\C\hookrightarrow\mod\C$ admits an exact left adjoint $Q$.
Moreover, we have a localization sequence:
\begin{equation}\label{Auslander's_defect}
\xymatrix@C=1.2cm{
\dfct\C\ar[r]^-{{}}
&\mod\C\ar[r]^-{Q}\ar@/^1.2pc/[l]^-{}
&\C .\ar@/^1.2pc/[l]^{\mathbb{Y}}}
\end{equation}
\end{lemma}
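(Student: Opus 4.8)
The plan is to construct the exact left adjoint $Q$ of $\mathbb{Y}$ by hand and then read off the localization sequence from Proposition~\ref{prop:exact_functor_with_ff_adjoint}; this is essentially Auslander's original argument. Since $\C$ is abelian it has kernels, hence weak-kernels, so by Lemma~\ref{lem:weak-kernel} the category $\mod\C$ is abelian and the representable functors are projective in it. For $F\in\mod\C$ I would fix a projective presentation $\C(-,X_1)\xto{(-,a)}\C(-,X_0)\to F\to 0$ coming from a morphism $a\colon X_1\to X_0$ in $\C$, and set $QF:=\Cok(a)$. The cleanest way to see that this is independent of the chosen presentation and is functorial is to establish, for every $Y\in\C$, a natural isomorphism $\Hom_\C(QF,Y)\cong\Hom_{\mod\C}(F,\mathbb{Y}Y)$: applying $\Hom_{\mod\C}(-,\mathbb{Y}Y)$ to the presentation and invoking the Yoneda lemma identifies the right-hand side with $\{g\in\C(X_0,Y)\mid g a=0\}=\Hom_\C(\Cok a,Y)$, naturally in $Y$. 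By uniqueness of representing objects this promotes $F\mapsto QF$ to a functor left adjoint to $\mathbb{Y}$; and since $\mathbb{Y}$ is fully faithful the counit is invertible, so $Q\mathbb{Y}\cong\id_\C$.

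Next I would prove that $Q$ is exact. Right-exactness is automatic, $Q$ being a left adjoint, so the issue is left-exactness. Given a short exact sequence $0\to F_1\to F_2\to F_3\to 0$ in $\mod\C$, I would invoke the horseshoe lemma --- legitimate because the representables are projective --- to produce a short exact sequence of two-term projective presentations; by the Yoneda lemma it has the form $X_i^{2}\cong X_i^{1}\oplus X_i^{3}$ for $i=0,1$, with the middle differential $a_2\colon X_1^1\oplus X_1^3\to X_0^1\oplus X_0^3$ block upper-triangular, with diagonal entries $a_1,a_3$ and a single correction entry $b\colon X_1^3\to X_0^1$. The key point --- and the step I expect to be the main obstacle --- is to check that $b(\Ker a_3)\subseteq\Im a_1$: this holds because the horseshoe construction realizes $b$ as (minus) a lift along $a_1$ of a morphism $X_1^3\to F_1$ that factors through $a_3$, and one unwinds this using the Yoneda correspondence together with the identities $\Ker(-,a_3)=\C(-,\Ker a_3)$ and $\Im(-,a_1)=\Ker\bigl(\C(-,X_0^1)\twoheadrightarrow F_1\bigr)$ in $\mod\C$. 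Granting $b(\Ker a_3)\subseteq\Im a_1$, the snake lemma applied to the two rows $0\to X_i^1\to X_i^2\to X_i^3\to 0$ in the abelian category $\C$ has vanishing connecting morphism $\Ker a_3\to\Cok a_1$, so its six-term sequence collapses to $0\to QF_1\to QF_2\to QF_3\to 0$. Hence $Q$ is exact.

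Finally I would identify $\Ker Q$. An object $F$ presented by $\C(-,X_1)\xto{(-,a)}\C(-,X_0)\to F\to 0$ lies in $\Ker Q$ exactly when $a$ is an epimorphism, i.e.\ a deflation, in which case $\Ker a\to X_1\xto{a}X_0\dashrightarrow$ is an $\mathbb{E}$-triangle whose defect is isomorphic to $F$; conversely, any defect of $Z\to Y\xto{f}X\dashrightarrow$ has presentation $\C(-,Y)\xto{(-,f)}\C(-,X)\to\tilde\delta\to0$ with $f$ a deflation, hence $Q\tilde\delta=\Cok f=0$. Thus $\Ker Q=\dfct\C$. Applying Proposition~\ref{prop:exact_functor_with_ff_adjoint} to the fully faithful functor $\mathbb{Y}\colon\C\to\mod\C$ with its exact left adjoint $Q$ then yields the localization sequence $\dfct\C\to\mod\C\xto{Q}\C$ in which $\mathbb{Y}$ is the right adjoint, which is the assertion.
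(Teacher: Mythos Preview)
Your argument is correct and self-contained, whereas the paper does not actually prove this lemma: it is stated as a recollection from \cite[p.~205]{Aus66} with no proof in the text. So there is no ``paper's own proof'' to compare against beyond the citation; what you have written is essentially Auslander's original construction, phrased so as to feed directly into Proposition~\ref{prop:exact_functor_with_ff_adjoint}, which is a clean way to extract the localization sequence.

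One small expository point: in your exactness step you write that ``the horseshoe construction realizes $b$ as (minus) a lift along $a_1$ of a morphism $X_1^3\to F_1$''. The map you are lifting along is the epimorphism $\pi_1\colon\C(-,X_0^1)\twoheadrightarrow F_1$, not $a_1$; with that correction your unwinding is exactly right. Concretely, the horseshoe identity $\iota\,\pi_1\circ(-,b)=-\sigma\circ(-,a_3)$ shows that $\pi_1\circ(-,b)$ vanishes on $\Ker(-,a_3)=\C(-,\Ker a_3)$, hence $(-,b)$ restricted there lands in $\Ker\pi_1=\Im(-,a_1)$; evaluating at the identity of $\Ker a_3$ via Yoneda gives a factorization $bk=a_1 z$ in $\C$, so the snake connecting morphism $\Ker a_3\to\Cok a_1$ is zero, as you claim. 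The identification $\Ker Q=\dfct\C$ and the appeal to Proposition~\ref{prop:exact_functor_with_ff_adjoint} are straightforward and correct.
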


Now we are ready to prove Theorem \ref{thm:exact_or_abelian}.

\begin{proof}[Proof of Theorem \ref{thm:exact_or_abelian}]
(1) The `if' part follows from Lemma \ref{lem:exact_fully_faithful}.
To show the `only if' part, we suppose that $E_\C$ is exact and fully faithful.
Thanks to Proposition \ref{prop:extri_to_exact}, we have only to show that, for each conflation $X\xto{f}Y\xto{g}Z$,
the morphism $f$ is a monomorphism and the morphism $g$ is an epimorphism.
Since $E_\C$ is exact, we get an exact sequence $0\to Q(-,X)\to Q(-,Y)\to Q(-,Z)\to 0$ in $\frac{\mod\C}{\dfct\C}$.
Since $E_\C$ is fully faithful, the assertion is obvious.

(2) If $\C$ is abelian, by comparing Auslander's defect formula (\ref{Auslander's_defect}) with the Serre quotient $\dfct\C\to\mod\C\to\frac{\mod\C}{\dfct\C}$,
we get an equality $\C=\frac{\mod\C}{\dfct\C} =\lex\C$ as subcategories in $\mod\C$.
The `only if' part is obvious because $\frac{\mod\C}{\dfct\C}$ is abelian.
\end{proof}

\subsection{The case of enough projectives}
We study the case that an extriangulated category $\C$ has enough projectives.

\begin{definition}
Let $(\C,\mathbb{E},\mathfrak{s})$ be an extriangulated category.
We say that \textit{$\C$ has enough projectives} if there exists a full subcategory $\P$ in $\C$ with $\mathbb{E}(\P,\C)=0$ and, for every $C\in\C$,
there exists a conflation $C'\to P\to C$ with $P\in\P$.
\end{definition}

In this case, we have nicer forms of the quotient functor $Q:\mod\C\to\frac{\mod\C}{\dfct\C}$ and the functor $E_\C:\C\to\frac{\mod\C}{\dfct\C}$.

\begin{proposition}\label{prop:gq_equivalent_to_functor_category}
Let $(\C, \mathbb{E}, \mathfrak{s})$ be an extriangulated category with weak-kernels which has enough projectives.
Let $\P$ be the subcategory of projectives in $\C$ and consider the restriction functor $\mathsf{res}_\P :\mod\C\to\mod\P$.
Then the following hold.
\begin{itemize}
\item[(1)] There exists an equivalence $Q':\frac{\mod\C}{\dfct\C}\simeq\mod\P$ with $\mathsf{res}_\P\cong Q'\circ Q$.
\item[(2)] The functor $E_\C$ is isomorphic to the restricted Yoneda functor $\mathbb{Y}_\P:\C\to\mod\P$.
\item[(3)] An equality $\dfct\C=\mod(\C/[\P])$ holds in $\mod\C$.
\end{itemize}
\end{proposition}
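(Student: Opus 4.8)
The plan is to reduce all three assertions to Example~\ref{ex:colocalization_of_functor_category}(1). Its only hypothesis that is not already at hand is that $\P$ is contravariantly finite in $\C$, so I would verify this first. Given $C\in\C$, enough projectives furnishes a conflation $C'\to P\xto{\pi}C$ with $P\in\P$, and $\pi$ is a deflation. For any $P'\in\P$, Lemma~\ref{lem:ext_seq} yields an exact sequence $(P',P)\to(P',C)\to\mathbb{E}(P',C')$, and $\mathbb{E}(P',C')=0$ since $P'\in\P$; hence $(P',P)\to(P',C)$ is surjective, so $\pi$ is a right $\P$-approximation and $\P$ is contravariantly finite. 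Consequently Example~\ref{ex:colocalization_of_functor_category}(1) applies: $\mathsf{res}_\P:\mod\C\to\mod\P$ realizes the Serre quotient of $\mod\C$ by $\Ker(\mathsf{res}_\P)=\mod(\C/[\P])$; in particular $\mod(\C/[\P])=\{F\in\mod\C\mid F|_\P=0\}$ as a subcategory of $\mod\C$.

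I would then prove (3), that is, $\dfct\C=\mod(\C/[\P])$ inside $\mod\C$. For $\dfct\C\subseteq\mod(\C/[\P])$: let $\delta$ be an $\mathbb{E}$-extension with realization $\mathfrak{s}(\delta)=[Z\to Y\xto{f}X]$, so that its defect satisfies $\tilde\delta\cong\Cok\big((-,Y)\xto{(-,f)}(-,X)\big)$; for $P\in\P$ the deflation $f$ induces a surjection $(P,Y)\to(P,X)$ (by the computation of the previous paragraph), hence $\tilde\delta(P)=0$, i.e. $\tilde\delta|_\P=0$. For the reverse inclusion I would use Proposition~\ref{prop:char_of_Defect}, which identifies $\dfct\C$ with the subcategory $\eff\C$ of effaceable functors: if $F\in\mod\C$ satisfies $F|_\P=0$, then for each $X\in\C$ a deflation $\alpha:P\twoheadrightarrow X$ with $P\in\P$ gives $F(\alpha):F(X)\to F(P)=0$, so $F(\alpha)(x)=0$ for every $x\in F(X)$; thus $F$ is effaceable and $F\in\eff\C=\dfct\C$. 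This proves (3).

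Assertions (1) and (2) are then formal. By (3) and the first paragraph, the exact functor $\mathsf{res}_\P$ realizes the Serre quotient of $\mod\C$ by $\dfct\C$, so comparison with the canonical quotient $Q:\mod\C\to\frac{\mod\C}{\dfct\C}$ through the universal property of the Serre quotient produces an equivalence $Q':\frac{\mod\C}{\dfct\C}\xto{\ \sim\ }\mod\P$ with $\mathsf{res}_\P\cong Q'\circ Q$, which is (1). Composing with the Yoneda embedding, $Q'\circ E_\C=Q'\circ Q\circ\mathbb{Y}\cong\mathsf{res}_\P\circ\mathbb{Y}=\mathbb{Y}_\P$, so that under the equivalence $Q'$ the functor $E_\C$ is the restricted Yoneda functor $\mathbb{Y}_\P$; this is (2).

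I do not expect a genuine obstacle, as the argument is mostly assembly. The one point requiring care is that two facts must match up: that $\mod(\C/[\P])$ really is $\{F\in\mod\C\mid F|_\P=0\}$, and that $\mathsf{res}_\P$ is an honest realization of the associated Serre quotient (not merely a category admitting it abstractly) --- both being exactly the content of Example~\ref{ex:colocalization_of_functor_category}(1) once contravariant finiteness of $\P$ is established. It is worth stressing that, in contrast with Theorem~\ref{thm:extri_Auslander's_formula}, no hypothesis on a right adjoint of $Q$ is needed here: the presence of enough projectives makes $\mathsf{res}_\P$ itself the quotient functor, so the equivalence $\frac{\mod\C}{\dfct\C}\simeq\mod\P$ is unconditional.
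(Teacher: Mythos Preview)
Your proof is correct and uses the same ingredients as the paper, but you organize them differently. The paper proves (1) first: it observes that defects vanish on $\P$ (since $\tilde\delta$ is a subobject of $\mathbb{E}(-,Z)$), so $\mathsf{res}_\P$ factors through $Q$ by universality, and then verifies directly that the induced functor $Q'$ is full, dense, and faithful---the faithfulness check being a diagram chase showing that any morphism killed by $\mathsf{res}_\P$ has image in $\dfct\C$. Part (3) is then deduced by comparing the two Serre quotients. You instead prove (3) first, and your argument for the inclusion $\mod(\C/[\P])\subseteq\dfct\C$ via the effaceable characterization (Proposition~\ref{prop:char_of_Defect}) is cleaner than the paper's morphism-level faithfulness argument; once (3) is in hand, (1) is immediate from the universal property, with no separate verification of full/faithful/dense needed. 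Your explicit check that $\P$ is contravariantly finite is also a point the paper leaves implicit when invoking Example~\ref{ex:colocalization_of_functor_category}(1).
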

\begin{proof}
(1) Let $\tilde\delta$ be a defect corresponding to an $\mathbb{E}$-triangle $Z\longrightarrow Y\longrightarrow X\overset{\delta}{\dashrightarrow} $.
Since $\tilde\delta$ is a subobject of $\mathbb{E}(-, Z)$, it vanishes on $\P$.
Thus, the restriction functor $\mathsf{res}_\P:\mod\C\to\mod\P$ vanishes on $\dfct\C$.
Since $\mathsf{res}_\P$ is exact, by the universality, there uniquely exists an exact functor $Q':\frac{\mod\C}{\dfct\C}\to\mod\P$ such that $\mathsf{res}_\P\cong Q'\circ Q$.
We shall show that $Q'$ is an equivalence.

It is obvious that $Q'$ is full and dense.
To show the faithfulness of $Q'$, we consider a morphism $\alpha: F\to G$ in $\mod\C$ such that $\mathsf{res}_\P(\alpha)=0$.
Note that there exists an epimorphism $(-,C)\to F\to 0$.
Since $\C$ has enough projectives, we have an $\mathbb{E}$-triangle $C'\longrightarrow P\longrightarrow C\overset{\delta}{\dashrightarrow}$ with $P\in\P$.
The following commutative diagram shows that the functor $\Im\alpha$ belongs to $\dfct\C$.
$$
\xymatrix@C=18pt@R=15pt{
(-,C')\ar[r]&(-,P)\ar[rdd]_0\ar[r]&(-,C)\ar@{->>}[d]\ar@{->>}[r]&\tilde\delta\ar[r]\ar@{->>}[ldd]&0\\
&&F\ar@{->>}[d]_\alpha&&\\
&&\Im\alpha&&
}
$$
Thus we get $\Im\alpha\in\dfct\C$,
which shows that the morphisms $\alpha$ factors through an object in $\dfct\C$.
This shows the faithfulness of $Q'$.

(2) This directly follows from (1). In fact, we have $Q'E_\C=Q'Q\mathbb{Y}\cong \mathsf{res}_\P\mathbb{Y}=\mathbb{Y}_\P$.

(3) Since $\P$ is contravariantly finite in $\C$, by Example \ref{ex:colocalization_of_functor_category}(1), we have a Serre quotient $\mod(\C/[\P])\to\mod\C\to\mod\P$.
By comparing it with a Serre quotient $\dfct\C\to\mod\C\to\frac{\mod\C}{\dfct\C}$, we have a desired equality.
\end{proof}

We end this section by showing that, in the case that $\C$ is an exact category having enough projectives,
the quotient functor $Q:\mod\C\to\frac{\mod\C}{\dfct\C}\simeq\mod\P$ always admits a right adjoint.

\begin{proposition}\label{prop:fpGQ}
Let $(\C,\mathbb{E})$ be an exact category with weak-kernels which has enough projectives.
Then,
the restriction functor $\mathsf{res}_\P:\mod\C\to\mod\P$ admits a right adjoint $R$. Moreover, it induces a recollement
\begin{equation}
\xymatrix@C=1.2cm{
\dfct\C\ar[r]^-{{}}
&\mod\C\ar[r]^-{\mathsf{res}_\P}\ar@/^1.2pc/[l]^-{}\ar@/^-1.2pc/[l]^-{}
&\mod\P\ar@/^1.2pc/[l]^{R}\ar@/^-1.2pc/[l]_{L}
}
\end{equation}
\end{proposition}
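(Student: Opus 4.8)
The plan is to reduce the whole statement to the single fact that $Q$ admits a right adjoint, and then to invoke the machinery already in place. By Proposition~\ref{prop:gq_equivalent_to_functor_category} we have $\mathsf{res}_\P\cong Q'\circ Q$ with $Q'$ an equivalence, hence $\Ker(\mathsf{res}_\P|_{\mod\C})=\Ker Q=\dfct\C$, and so $\mathsf{res}_\P$ admits a right adjoint if and only if $Q$ does. Moreover, since $\C$ has enough projectives, every deflation $P\twoheadrightarrow C$ with $P\in\P$ is a right $\P$-approximation (a lift of any $P'\to C$ with $P'\in\P$ exists because $\mathbb{E}(P',-)=0$), so $\P$ is contravariantly finite and Example~\ref{ex:colocalization_of_functor_category}(1) already supplies the colocalization sequence $\dfct\C\to\mod\C\xto{\mathsf{res}_\P}\mod\P$ with left adjoint $L$. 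Therefore, once $Q$ has a right adjoint, Theorem~\ref{thm:extri_Auslander's_formula} gives the localization sequence $\dfct\C\to\mod\C\to\lex\C\simeq\mod\P$, and combining it with the colocalization sequence produces the asserted recollement. So everything comes down to producing a right adjoint of $Q$, for which I would verify criterion (ii) of Proposition~\ref{prop:right_adjoint2}: for each $F\in\mod\C$ one must exhibit an exact sequence $S\to F\to G$ in $\mod\C$ with $S\in\dfct\C$ and $G\in(\dfct\C)^\perp$, where $(\dfct\C)^\perp=\lex\C$ by Proposition~\ref{prop:char_of_perp}.

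To build $S\to F\to G$ I would construct a finitely presented left exact functor $G$ together with a morphism $u\colon F\to G$ in $\mod\C$ whose restriction to $\P$ is an isomorphism; then $\ker u$ and $\Cok u$ vanish on $\P$, hence lie in $\Ker(\mathsf{res}_\P|_{\mod\C})=\dfct\C$, and $S:=\ker u$, together with the inclusion $S\hookrightarrow F$ and with $u$, is an exact sequence of the required form. The functor $G$ can be taken to be the left-exact reflection of $F$, realized as the right Kan extension of $M:=F|_\P\in\mod\P$ along $\P\hookrightarrow\C$: using enough projectives, choose for each $C\in\C$ conflations $C'\to P_0\twoheadrightarrow C$ and $C''\to P_1\twoheadrightarrow C'$ with $P_0,P_1\in\P$, and set $G(C):=\ker\big(M(P_0)\to M(P_1)\big)$, the arrow being induced by the composite $P_1\twoheadrightarrow C'\hookrightarrow P_0$; the unit $u\colon F\to G$ of this adjunction restricts to an isomorphism on $\P$. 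It is useful to keep in mind the model case $F=\Cok\big((-,\phi)\big)$ with $\phi\colon C_1\to C_0$ an inflation, realized by an $\mathbb{E}$-triangle $C_1\xto{\phi}C_0\xto{c}N\dashrightarrow$: here Lemma~\ref{lem:ext_seq} already identifies $F$ with $\Im\big((-,C_0)\xto{(-,c)}(-,N)\big)$, a subobject of the representable $(-,N)\in\lex\C$, so that one may take $S=0$ and $G=(-,N)$.

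I expect the main obstacle to be showing that $G$ is simultaneously left exact and finitely presented. Left exactness is precisely where the exact (rather than merely extriangulated) structure of $\C$ is indispensable: given a conflation $A\to B\to C$, one has to glue the chosen projective presentations of $B$ and of $C$ into a compatible one — taking pullbacks and pushouts of conflations along the structure maps and using (ET4) — and then chase the resulting commutative diagram, invoking that in an exact category every deflation is the cokernel of its kernel; one also needs the independence of $G(C)$ of the chosen presentation, which is proved by comparing two projective presentations and using that objects of $\P$ are projective. Finite presentation of $G$ is where enough projectives (equivalently, contravariant finiteness of $\P$) is used, so that the presentation data can be chosen finitely and $G$ lands in $\mod\C$ rather than merely in $\Mod\C$; that this cannot be dispensed with is shown by Example~\ref{ex:Enomoto}, where $\C$ is exact with weak-kernels but lacks enough projectives and $Q$ has no right adjoint. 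Once $G$ and $u$ are available, the recollement, the full faithfulness of $R$ and $L$, and the identification $\Im R=(\dfct\C)^\perp=\lex\C$ follow formally from Lemma~\ref{lem:property_of_localization_seq} and the results quoted above.
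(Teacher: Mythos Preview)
Your overall strategy is sound and your functor $G$ is exactly the paper's right adjoint evaluated on $M=F|_\P$: unwinding, $G(C)=\ker(M(P_0)\to M(P_1))=\Hom_{\mod\P}(\mathbb{Y}_\P(C),M)$. The route, however, differs from the paper's in two respects. First, the paper does not pass through Proposition~\ref{prop:right_adjoint2} and never verifies left-exactness of $G$ directly; instead it writes down $R:=\mathsf{res}_\C\circ\mathbb{Y}\colon \mod\P\hookrightarrow\mod(\mod\P)\to\mod\C$ (Yoneda into $\mod(\mod\P)$, then restrict along $\mathbb{Y}_\P$) and checks in one line that this is right adjoint to $\mathsf{res}_\P$. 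Second, the finiteness issue --- that $R(M)$ lies in $\mod\C$ rather than only in $\Mod\C$ --- is handled by observing that the image of $\C$ under $\mathbb{Y}_\P$ is contravariantly finite in $\mod\P$: since $\mathsf{res}_\P$ is full and dense, any $M\in\mod\P$ can be written as $\mathsf{res}_\P F$, and a projective cover $(-,C_0)\twoheadrightarrow F$ in $\mod\C$ restricts to a right $\C$-approximation $\mathbb{Y}_\P(C_0)\to M$. Then Example~\ref{ex:colocalization_of_functor_category}(1), applied to $\C\subset\mod\P$, guarantees $\mathsf{res}_\C$ takes $\mod(\mod\P)$ into $\mod\C$.

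This last point is precisely where your proposal is thin. Your sentence ``the presentation data can be chosen finitely and $G$ lands in $\mod\C$'' does not actually produce objects $C_0,C_1\in\C$ with $(-,C_1)\to(-,C_0)\to G\to 0$; the data $P_0,P_1$ vary with $C$, and it is not obvious how to globalize them into a single finite presentation of $G$. The naive candidate $(-,Q_0)\to G$ (with $(-,Q_1)\to(-,Q_0)\to M\to 0$ a presentation of $M$) need not be an epimorphism in $\Mod\C$, because a map $\mathbb{Y}_\P(C)\to M$ need not lift to a map $C\to Q_0$ in $\C$. The paper's contravariant-finiteness argument is exactly the missing idea here. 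What your approach buys is an explicit description of the reflection and a direct link to Proposition~\ref{prop:right_adjoint2}; what the paper's approach buys is that left-exactness of $G$ and the criterion~(ii) come for free once the adjunction is established, and the only substantive check is contravariant finiteness of $\C$ in $\mod\P$.
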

\begin{proof}
It is basic that $\mathsf{res}_\P$ is full and dense.
Clearly, the subcategory $\C$ is contravariantly finite in  $\mod\C$.
By Proposition \ref{prop:gq_equivalent_to_functor_category}, the composition $\mathbb{Y}_\P:\C\hookrightarrow \mod\P$ is fully faithful.
It is straightforward that the subcategory $\C$ is a contravariantly finite in $\mod\P$.
Therefore, by Example \ref{ex:colocalization_of_functor_category}, we have a colocalization sequence $\mod(\mod\P/[\C])\to\mod(\mod\P)\xto{\mathsf{res}_\C}\mod\C$, that is, $\mathsf{res}_\C$ has a left adjoint $L$.
Let $\mathbb{Y}:\mod\P\hookrightarrow \mod(\mod\P)$ be the Yoneda functor.
It is easily checked that the functor $R:=\mathsf{res}_\C\circ\mathbb{Y}:\mod\P\to\mod\C$ is a right adjoint of $\mathsf{res}_\P$.
By Proposition \ref{prop:right_adjoint1} and its dual, we have a desired recollement.
\end{proof}

We should mention that there are related results for exact categories with enough projectives \cite[Prop. A]{Eno17} and Frobenius exact categories \cite[Thm. 4.2]{Che10}.

\section{Direct colimits of Auslander's defects}\label{sec:GQ}
Let $\C$ be a skeletally small extriangulated category with weak-kernels.
We consider an expansion of $\eff\C$ by taking direct colimits.
To begin, we recall basic propeties of direct colimits in $\Mod\C$.
A poset $(I,\leq)$ is called a \textit{directed set} if,
for any $i,j\in I$ there exists $k\in I$ with $i\leq k, j\leq k$.
Regarding a directed set $I$ as a category,
for a covariant functor $F:I\to\Mod\C$,
the associated colimit ${\underrightarrow{\lim}}_I F_i$ is called a \textit{direct colimit} of $\{F_i\}_{i\in I}$.
For any colimit ${\underrightarrow{\lim}}_I F_i$, there exists an exact sequence
$$\coprod_{k\in K}F_k\to\coprod_{j\in J}F_j\to{\underrightarrow{\lim}}_I F_i\to 0$$
for some sets $K$ and $J$.
For any skeletally small additive category $\C$, it is well-known that 
any object in $\Mod\C$ can be obtained as a direct colimit of objects in $\mod\C$.
We denote by $\Dfct\C$ the full subcategory in $\Mod\C$ consisting of direct colimits of objects in $\dfct\C$.
Our first aim of this section is to expand Theorem \ref{thm:extri_Auslander's_formula} by taking direct colimits as follows:

\begin{theorem}\label{thm:GQ_for_extri}
Let $(\C,\mathbb{E}, \mathfrak{s})$ be a skeletally small extriangulated category with weak-kernels.
Then, the Serre quotient
\begin{equation*}\label{eq:Serre_quot_for_extri2}
\xymatrix@C=1.2cm{
\dfct\C\ar[r]^-{{}}
&\mod\C\ar[r]^-{Q}
&\frac{\mod\C}{\eff\C}.
}
\end{equation*}
induces the following localization sequence
\begin{equation}\label{eq:localization_seq_for_extri2}
\xymatrix@C=1.2cm{
\Dfct\C \ar[r]^-{{}}
&\Mod\C\ar[r]^-{}\ar@/^1.2pc/[l]^-{}
&\Lex\C\ar@/^1.2pc/[l]^{R}}
\end{equation}
where $R$ denotes the canonical inclusion.
\end{theorem}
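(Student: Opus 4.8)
The plan is to pass from the finitely presented situation of Theorem \ref{thm:extri_Auslander's_formula} to the ``big'' situation by taking direct colimits, using the criterion for Grothendieck categories from Proposition \ref{prop:right_adjoint3}. First I would recall that for a skeletally small additive category $\C$ the functor category $\Mod\C$ is a Grothendieck category and that $\Mod\C=\overrightarrow{\mod\C}$. The key structural point is that $\overrightarrow{\dfct\C}$ is a Serre subcategory of $\Mod\C$: this follows from the fact that $\dfct\C=\eff\C$ is a Serre subcategory of $\mod\C$ (Proposition \ref{prop:char_of_Defect}) together with the general principle that the closure under direct colimits of a Serre subcategory of $\mod\C$ is a Serre subcategory of $\Mod\C$. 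Concretely, using that every object of $\Mod\C$ is a direct colimit of finitely presented ones and that direct colimits in $\Mod\C$ are exact, one checks that $\overrightarrow{\dfct\C}$ is closed under subobjects, quotients, and extensions by a standard direct-colimit argument (exhaust a short exact sequence $0\to A\to B\to C\to 0$ by finitely presented subobjects and push the analysis down to $\mod\C$, where $\eff\C$ is Serre).

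Next I would identify the Serre quotient. Since $\overrightarrow{\dfct\C}$ is a Serre subcategory closed under coproducts (a direct colimit of coproducts of defects is again a direct colimit of defects), Proposition \ref{prop:right_adjoint3} immediately gives that both the inclusion $\overrightarrow{\dfct\C}\hookrightarrow\Mod\C$ and the quotient $\Mod\C\to\frac{\Mod\C}{\overrightarrow{\dfct\C}}$ admit right adjoints, hence we get a localization sequence. It then remains to identify $\frac{\Mod\C}{\overrightarrow{\dfct\C}}$ with $\Lex\C$ and to see that the right adjoint $R$ is the canonical inclusion. By Lemma \ref{lem:property_of_localization_seq}(3) the image of $R$ is the perpendicular category $(\overrightarrow{\dfct\C})^\perp$, so the task is to prove $(\overrightarrow{\dfct\C})^\perp=\Lex\C$ inside $\Mod\C$. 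For this I would argue that for $F\in\Mod\C$ one has $\Hom_\C(T,F)=0=\Ext^1_\C(T,F)$ for all $T\in\overrightarrow{\dfct\C}$ if and only if the same vanishing holds for all $T\in\dfct\C$: the ``only if'' direction is trivial, and the ``if'' direction uses that $\Hom_\C(-,F)$ sends direct colimits to inverse limits and, crucially, that $\Ext^1_\C(-,F)$ behaves well enough on direct colimits of finitely presented modules (here one invokes the exact sequence $\coprod_{K}F_k\to\coprod_{J}F_j\to \varinjlim F_i\to 0$ displayed before the theorem, applies $\Hom_\C(-,F)$, and uses that coproducts of defects lie in $\overrightarrow{\dfct\C}$). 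Once the perpendicular category is seen to depend only on $\dfct\C$, the computation $(\dfct\C)^\perp\cap\mod\C=\lex\C$ from Proposition \ref{prop:char_of_perp} globalizes verbatim: a module $F\in\Mod\C$ kills $\dfct\C$ in the Hom and Ext$^1$ sense precisely when, for every $\mathbb{E}$-triangle $Z\to Y\to X\dashrightarrow$ with defect $\tilde\delta$, the sequence $0\to FX\to FY\to FZ$ is exact, i.e. precisely when $F$ is left exact. Thus $\Im R=\Lex\C$ and $R$ is the inclusion.

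Finally I would assemble: the localization sequence (\ref{eq:localization_seq_for_extri2}) is the one associated to the Serre subcategory $\overrightarrow{\dfct\C}$, with quotient canonically equivalent to $\Lex\C$ via Lemma \ref{lem:property_of_localization_seq}(2), and the right adjoint $R$ realized as the canonical fully faithful inclusion $\Lex\C\hookrightarrow\Mod\C$. I expect the main obstacle to be the Ext-vanishing step in the identification $(\overrightarrow{\dfct\C})^\perp=\Lex\C$: passing $\Ext^1$-vanishing conditions through a direct colimit is not formal, and one must genuinely use the presentation of a direct colimit by coproducts together with the coproduct-closedness of $\overrightarrow{\dfct\C}$ (and the Grothendieck hypothesis, e.g. that filtered colimits are exact and $\Ext^1$ commutes with the relevant coproducts in the first variable via the long exact sequence) to reduce to the finitely presented case handled in Proposition \ref{prop:char_of_perp}. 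The Serre-subcategory claim for $\overrightarrow{\dfct\C}$, while routine, is the other place where care with the exhaustion-by-finitely-presented-subobjects argument is needed.
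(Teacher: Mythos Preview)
Your proposal is correct and follows essentially the same approach as the paper. The paper invokes Krause's result (Proposition~\ref{prop:Krause}) to get that $\overrightarrow{\dfct\C}$ is localizing in $\Mod\C$---which packages your ``Serre plus coproduct-closed plus Proposition~\ref{prop:right_adjoint3}'' argument as a single citation---and then proves $(\overrightarrow{\dfct\C})^\perp=\Lex\C$ (Lemma~\ref{lem:colimit_of_eff_is_Eff}) by the same reduction to $(\dfct\C)^\perp$ via the coproduct presentation $\coprod_K S_k\to\coprod_J S_j\to S\to 0$ that you outline, including exactly the $\Ext^1$ step you flag as the delicate point.
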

The first Serre quotient is given in Theorem \ref{thm:extri_Auslander's_formula}.
We have only to show that the localization sequence (\ref{eq:localization_seq_for_extri2}) exists.
To show $\Dfct\C$ is a Serre subcategory in $\Mod\C$.
We consider the effaceable functors in $\Mod\C$, that is,
we say a functor $F\in\Mod\C$ to be \textit{effaceable} if it satisfies the condition (\ref{condition_eff}) in Definition \ref{def:effaceable} and denote by $\Eff\C$ the full subcategory of effaceable functors.
Clearly $\eff\C=\mod\C\cap\Eff\C$ holds.
In the following, we shall show that $\Eff\C$ coincides with $\Dfct\C$.

\begin{lemma}\label{lem:Eff_is_localizing}
Let $\C$ be a skeletally small extriangulated category with weak-kernels.
Then the subcategory $\Eff\C$ is localizing in $\Mod\C$.
\end{lemma}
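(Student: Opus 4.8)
The plan is to verify the criterion of Proposition \ref{prop:right_adjoint3}. Since $\C$ is skeletally small, the functor category $\Mod\C$ is a Grothendieck category, so it suffices to show that $\Eff\C$ is a Serre subcategory of $\Mod\C$ which is closed under coproducts; then Proposition \ref{prop:right_adjoint3} applied to the Serre quotient $\Eff\C\to\Mod\C\to\frac{\Mod\C}{\Eff\C}$ gives that both the inclusion and the quotient functor admit right adjoints, which is exactly the assertion that $\Eff\C$ is localizing.

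First I would check that $\Eff\C$ is a Serre subcategory. The key point is that in $\Mod\C$ monomorphisms, epimorphisms and exact sequences are all detected objectwise, and for a morphism $\alpha$ in $\C$ and a subfunctor $S\subseteq F$ the map $S(\alpha)$ is the restriction of $F(\alpha)$. Hence, if $F\in\Eff\C$, then every subfunctor $S\subseteq F$ satisfies the condition (\ref{condition_eff}) by effacing a given element of $S(X)\subseteq F(X)$ with the same deflation that effaces it in $F(X)$; and every quotient $F\twoheadrightarrow G$ satisfies (\ref{condition_eff}) by lifting an element of $G(X)$ to $F(X)$, effacing it there, and pushing it forward. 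Closure under extensions is proved by exactly the diagram chase of Lemma \ref{lem:Serre_subcategory}, which uses only objectwise exactness and closure of deflations under composition, and therefore goes through verbatim in $\Mod\C$. This establishes that $\Eff\C$ is Serre.

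The main work is closure under coproducts. Let $F=\coprod_{i\in I}F_i$ with each $F_i\in\Eff\C$, fix $X\in\C$, and take $x\in F(X)=\bigoplus_{i\in I}F_i(X)$. Then $x$ has finite support, say $x=\sum_{i\in I_0}x_i$ with $I_0\subseteq I$ finite and $x_i\in F_i(X)$. For each $i\in I_0$ choose a deflation $\alpha_i\colon Y_i\to X$ with $F_i(\alpha_i)(x_i)=0$. I would then construct, by iterated pullback, a single deflation $\beta\colon Y\to X$ factoring through every $\alpha_i$: given two deflations into $X$, the pullback square of Lemma \ref{lem:pullback} produces an object mapping to both domains whose composite to $X$ is again a deflation (a composite of deflations) and factors through each of the two, and an induction over the finite set $I_0$ finishes the construction. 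For this $\beta$ one gets $F(\beta)(x)=\sum_{i\in I_0}F(\beta)(x_i)=0$, since $F(\beta)$ restricted to the summand $F_i$ equals $F_i(\beta)$, which factors through $F_i(\alpha_i)$ and hence kills $x_i$. Thus $F\in\Eff\C$, and $\Eff\C$ is closed under coproducts.

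The only slightly delicate step is this last one, where it is essential that an element of a coproduct of functors has finite support, so that only finitely many pullbacks are needed to build the common refinement $\beta$; everything else is the objectwise transcription of arguments already carried out for $\mod\C$ in Lemma \ref{lem:Serre_subcategory} and Proposition \ref{prop:char_of_Defect}. (Alternatively, the statement could be deduced from Proposition \ref{prop:Krause} once the identification $\Eff\C=\overrightarrow{\dfct\C}$ is available, but the direct verification above is self-contained and does not require that identification.)
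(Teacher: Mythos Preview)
Your proposal is correct and follows the same overall strategy as the paper: show that $\Eff\C$ is a Serre subcategory of $\Mod\C$ by transcribing the argument of Lemma~\ref{lem:Serre_subcategory}, then show it is closed under coproducts, and conclude via Proposition~\ref{prop:right_adjoint3}.

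The one place where you and the paper differ is in the construction of the single deflation killing a finitely supported element $x=\sum_{i\in I_0}x_i$. The paper takes the direct sum $\alpha=\bigoplus_{j\in I_0}\alpha_j:\bigoplus Y_j\to\bigoplus X$ and composes with the summation map $\pi:\bigoplus X\to X$, observing that $\pi$ is split epic so $\pi\alpha$ is again a deflation. You instead build a common refinement $\beta:Y\to X$ by iterated pullback along the finitely many $\alpha_i$, using that pullbacks of deflations are deflations and that deflations compose. Both constructions are valid; yours has the advantage that the verification $F(\beta)(x)=0$ is immediate, since $\beta$ literally factors through each $\alpha_i$, whereas the paper's ``standard argument'' for $F(\pi\alpha)(x)=0$ requires a little more care with the components.
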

\begin{proof}
By a similar argument given in Lemma \ref{lem:Serre_subcategory}, we can easily check that $\Eff\C$ is a Serre subcategory in $\Mod\C$.

Thanks to Lemma \ref{prop:right_adjoint3}, it suffices to show that $\Eff\C$ is closed under coproducts.
Let $\{S_i\}_{i\in I}$ be a set of objects in $\Eff\C$ and put $F:=\coprod_{i\in I}S_i$.
Consider $X\in\C$ and $x:=\{x_i\}_{i\in I}\in F(X)$.
There exists a finite subset $J\subseteq I$ such that $x_i=0$ for $i\in I\setminus J$.
For each $j\in J$, since $S_j\in\widetilde{\S}$, we get a deflation $Y_j\xto{\alpha_j} X$
such that $S_j(\alpha_j)(x_j)=0$.
Thus we have a deflation $\coprod_{j\in J}Y_j\xto{\alpha} \coprod_{j\in J}X$.
Let $\pi:\coprod_{j\in J}X\to X$ be a natural summation morphism.
Since $\pi$ is a splitting epimorphism, we have a deflation $\pi\alpha:\coprod_{j\in J}Y_j\to X$.
By a standard argument, we can verify that $F(\pi\alpha)(x)=0$. This shows $F \in\Eff\C$.
\end{proof}

\begin{lemma}\label{lem:colimit_of_eff_is_Eff}
An equality $\Dfct\C=\Eff\C$ holds in $\Mod\C$.
\end{lemma}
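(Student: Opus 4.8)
The plan is to prove the two inclusions $\overrightarrow{\dfct\C}\subseteq\Eff\C$ and $\Eff\C\subseteq\overrightarrow{\dfct\C}$ separately, using that $\Eff\C$ is a localizing (hence in particular coproduct-closed Serre) subcategory of $\Mod\C$ by Lemma~\ref{lem:Eff_is_localizing}, and that $\eff\C=\dfct\C$ by Proposition~\ref{prop:char_of_Defect}.

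For the inclusion $\overrightarrow{\dfct\C}\subseteq\Eff\C$: take $S={\underrightarrow{\lim}}_I S_i$ a direct colimit of objects $S_i\in\dfct\C=\eff\C$. Using the standard presentation $\coprod_{k\in K}S_k\to\coprod_{j\in J}S_j\to S\to 0$ recalled at the start of the section, the coproduct $\coprod_{j\in J}S_j$ lies in $\Eff\C$ since $\Eff\C$ is closed under coproducts, and then $S$, being a quotient of it in $\Mod\C$, lies in $\Eff\C$ because $\Eff\C$ is a Serre subcategory of $\Mod\C$ (closed under quotients), exactly as established in the proof of Lemma~\ref{lem:Eff_is_localizing}. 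This direction is routine.

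The main work is the converse $\Eff\C\subseteq\overrightarrow{\dfct\C}$. Fix $S\in\Eff\C$. Since $\C$ is skeletally small, $\Mod\C=\overrightarrow{\mod\C}$, so we may write $S$ as a direct colimit ${\underrightarrow{\lim}}_I F_i$ of finitely presented functors $F_i\in\mod\C$. The point is to replace each $F_i$ by an effaceable (equivalently, defect) functor without changing the colimit. Concretely, for each index $i$ choose a presentation $(-,X_i)\xto{p_i}F_i\to 0$; composing with the structure map $F_i\to S$ gives an element of $S(X_i)=({\underrightarrow{\lim}}_I F_j)(X_i)$, and since $S$ is effaceable there is a deflation $\alpha_i\colon Y_i\twoheadrightarrow X_i$ with $S(\alpha_i)$ killing the image of that element. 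Letting $\delta_i$ be an $\mathbb{E}$-extension realized by a conflation $Z_i\to Y_i\xrightarrow{\alpha_i} X_i$, the defect $\tilde\delta_i\in\dfct\C$ receives an epimorphism from $(-,X_i)$, and the vanishing $S(\alpha_i)\,p_i(-)\,=0$ lets one factor the composite $(-,X_i)\to F_i\to S$ through $\tilde\delta_i\to S$ — this is the same diagram chase used in the $\eff\C\subseteq\dfct\C$ half of Proposition~\ref{prop:char_of_Defect}. Reindexing over a suitable directed set built from the $\tilde\delta_i$ and the compatibility maps (one can take the directed set of finite subsystems, or directly verify the universal property), one obtains $S={\underrightarrow{\lim}}\tilde\delta_i$ with each $\tilde\delta_i\in\dfct\C$, hence $S\in\overrightarrow{\dfct\C}$.

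The step I expect to be the main obstacle is organizing the reindexing cleanly: the defects $\tilde\delta_i$ produced above are not canonically functorial in $i$, so one must either pass to a cofinal directed subsystem on which the factorizations can be chosen compatibly, or argue more invariantly that every morphism $\tilde\delta\to S$ from a defect, together with the epimorphisms $(-,X)\twoheadrightarrow\tilde\delta$, assembles $S$ as a filtered colimit of objects of $\dfct\C$ — essentially checking that $\dfct\C$ is "dense" in $\Eff\C$ in the sense needed for $\Eff\C=\overrightarrow{\dfct\C}$, using that $\Eff\C\cap\mod\C=\eff\C=\dfct\C$. Once the bookkeeping of the colimit system is set up, every individual verification (that $\tilde\delta_i\in\dfct\C$, that the factorization exists, that the colimit is preserved) reduces to the Yoneda lemma and the pullback property in Lemma~\ref{lem:pullback}, exactly as in the finitely presented case.
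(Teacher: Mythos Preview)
Your first inclusion $\overrightarrow{\dfct\C}\subseteq\Eff\C$ is the same as the paper's.

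For the converse, your construction of the defects $\tilde\delta_i$ together with maps $\tilde\delta_i\to S$ is essentially what the paper does (the paper starts directly from an epimorphism $\coprod_{i\in I}(-,X_i)\twoheadrightarrow S$ rather than from a filtered colimit of finitely presented functors, but this is cosmetic). The substantive difference is in the final step: you try to assemble the $\tilde\delta_i$ into a directed system with colimit $S$, and you correctly flag the reindexing as the obstacle --- but you do not actually carry it out, and the suggested fixes (cofinal subsystems, ``checking the universal property'') are vague. As stated this is a gap.

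The paper sidesteps the reindexing entirely by invoking Proposition~\ref{prop:Krause} (Krause): since $\dfct\C$ is a Serre subcategory of $\mod\C$, the subcategory $\overrightarrow{\dfct\C}$ is already a localizing Serre subcategory of $\Mod\C$, and in particular is closed under quotients. Your own construction gives an epimorphism $\coprod_i\tilde\delta_i\twoheadrightarrow S$ (because $\coprod_i(-,X_i)\twoheadrightarrow S$ factors through it), and $\coprod_i\tilde\delta_i\in\overrightarrow{\dfct\C}$ since a coproduct is a filtered colimit of finite direct sums, each of which lies in the Serre subcategory $\dfct\C$. Hence $S\in\overrightarrow{\dfct\C}$ with no need to organize a directed system among the $\tilde\delta_i$. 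So the missing idea is simply to use Proposition~\ref{prop:Krause} for closure under quotients rather than trying to exhibit $S$ directly as a filtered colimit of defects.
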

\begin{proof}
Let $S$ be an object in $\Dfct\C$.
Then there exists an exact sequence
\begin{equation}\label{seq:colimit}
\coprod_{i\in I}S_i\to\coprod_{j\in J}S_j\to S\to 0
\end{equation}
in $\Mod\C$ with $S_i, S_j\in\dfct\C$ for any $i\in I, j\in J$.
By Lemma \ref{lem:Eff_is_localizing}, we get $S\in\Eff\C$.

Let $S$ be an object in $\Eff\C$ with a set of morphisms $\{f_i:(-,X_i)\to S\}_{i\in I}$ forming an epimorphism $\coprod_{i\in I}(-,X_i)\to S\to 0$.
Since $S\in\Eff\C$, for each $i\in I$, there exists a deflation $\alpha_i: Y_i\to X_i$ which satisifies that the composition $(-,Y_i)\xto{(-,\alpha_i)} (-,X_i)\xto{f_i}S$ is zero.
We consider a set of exact sequences $\{(-,Y_i)\xto{(-,\alpha_i)}(-,X_i)\to S_i\to 0\}_{i\in I}$
and the coproduct of them:
$$\coprod_{i\in I}(-,Y_i)\xto{}
\coprod_{i\in I}(-,X_i)\to \coprod_{i\in I}S_i\to 0.$$
Since each $S_i$ belongs to $\dfct\C$, we have $\coprod S_i\in\Dfct\C$.
Since $S$ is a factor object of $\coprod S_i$, we conclude that $S$ also belongs to $\Dfct\C$.
\end{proof}

Now we are in position to prove Proposition \ref{thm:GQ_for_extri}.

\begin{proof}[Proof of Proposition \ref{thm:GQ_for_extri}]
By Lemmas \ref{lem:Eff_is_localizing} and \ref{lem:colimit_of_eff_is_Eff}, we have a localizaing subcategory $\Dfct\C$ which gives rise to a localization sequence
\begin{equation*}
\xymatrix@C=1.2cm{
\Dfct\C \ar[r]^-{{}}
&\Mod\C\ar[r]^-{Q}\ar@/^1.2pc/[l]^-{}
&\frac{\Mod\C}{\Dfct\C}.\ar@/^1.2pc/[l]^{R}}
\end{equation*}
It remains to show an equality $(\Dfct\C)^\perp=\Lex\C$ in $\Mod\C$.
Let $F\in\Mod\C$.
By a similar argument in Proposition \ref{prop:char_of_perp}(2),
we can easily check $\Lex\C\subseteq (\dfct\C)^\perp$ and $(\Dfct\C)^\perp\subseteq\Lex\C$.
It remains to check a containment $(\dfct\C)^\perp\subseteq(\Dfct\C)^\perp$.
Applying $(-,F)$ to the sequence (\ref{seq:colimit}), we have exact sequences
$$0\to(S,F)\to\prod_{j\in J}(S_j,F)\ \ \ \textnormal{and}\ \ \ 
0\to\Ext^1_\C(S,F)\to\prod_{j\in J}\Ext^1_\C(S_j,F).$$
Since $(S_j,F)=\Ext^1_\C(S_j,F)=0$ for any $j\in J$, we conclude $F\in(\Dfct\C)^\perp$.
Thanks to Lemma \ref{lem:property_of_localization_seq}(3), we have an equivalence $\frac{\Mod\C}{\Dfct\C}\simeq \Lex\C$.
This finishes the proof.
\end{proof}

We end this section by mentioning that,
in the case that $\C$ is exact,
the localization sequence (\ref{eq:localization_seq_for_extri2}) provides the Gabriel-Quillen embedding functor.
The theorem is stated as below.
A proof is provided in \cite[Appendix B]{Kel90} (see also \cite[Thm. A.1]{Buh10}).

\begin{proposition}
Let $\C$ be a skeletally small exact category.
Then the following hold.
\begin{itemize}
\item[(1)] There exists a localization sequence
\begin{equation*}
\xymatrix@C=1.2cm{
\Eff\C \ar[r]^-{{}}
&\Mod\C\ar[r]^-{Q}\ar@/^1.2pc/[l]^-{}
&\Lex\C\ar@/^1.2pc/[l]^{R}}
\end{equation*}
where $R$ denotes a canonical inclusion.
\item[(2)] The composition functor $Q\circ\mathbb{Y}:\C\to\Lex\C$ is a fully faithful exact functor which reflects exactness.
This functor is known as the \textnormal{Gabriel-Quillen embedding functor}.
\end{itemize}
\end{proposition}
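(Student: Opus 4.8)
The plan is to carry out the whole argument inside the Grothendieck category $\Mod\C$, which is what lets one dispense with the weak-kernel hypothesis used in Section~\ref{sec:defect}. Since $\C$ is skeletally small, $\Mod\C$ is indeed a Grothendieck category. First I would check that $\Eff\C$ is a Serre subcategory of $\Mod\C$ which is moreover closed under coproducts; both statements are proved exactly as in Lemma~\ref{lem:Eff_is_localizing} --- closure under extensions, subobjects and quotients by the diagram chase of Lemma~\ref{lem:Serre_subcategory}, and closure under coproducts by reducing an element of $(\coprod_i S_i)(X)$ to a finite partial sum and composing the resulting deflations through the $|J|$-fold codiagonal --- and these arguments use only pullbacks and composability of deflations, never weak-kernels. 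By the Grothendieck criterion (Proposition~\ref{prop:right_adjoint3}), $\Eff\C$ is then localizing, so one obtains a localization sequence $\Eff\C\to\Mod\C\xto{Q}\frac{\Mod\C}{\Eff\C}$ in which $Q$ admits a fully faithful right adjoint $R$ with $\Im R=(\Eff\C)^{\perp}$ (Lemma~\ref{lem:property_of_localization_seq}). It then remains to identify $(\Eff\C)^{\perp}$ with $\Lex\C$, after which $R$ is automatically the canonical inclusion.

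\textbf{Identifying the perpendicular category.} Rerunning the computation of Proposition~\ref{prop:char_of_perp}(2) verbatim inside $\Mod\C$ --- its only input being the exact sequence $(-,Z)\to(-,Y)\to(-,X)\to\tilde\delta\to 0$ of $\Mod\C$ attached to a conflation --- gives $(\dfct\C)^{\perp}=\Lex\C$, and since $\dfct\C\subseteq\Eff\C$ this already yields $(\Eff\C)^{\perp}\subseteq\Lex\C$. For the converse I would take $F\in\Lex\C$ and argue in two steps. First, $\Hom_\C(S,F)=0$ for every $S\in\Eff\C$: given $\varphi\colon S\to F$ and $x\in S(X)$, pick a deflation $\alpha\colon Y\to X$ with $S(\alpha)(x)=0$; then $F(\alpha)(\varphi_X(x))=0$, and $F(\alpha)$ is a monomorphism because $F$ is left exact, so $\varphi_X(x)=0$. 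Second, for $\Ext^1$-vanishing, choose for a given $S\in\Eff\C$ an epimorphism $\coprod_i(-,X_i)\twoheadrightarrow S$ together with deflations annihilating the components, which produces a surjection $\pi\colon\coprod_i S_i\twoheadrightarrow S$ with every $S_i\in\dfct\C$; put $K:=\Ker\pi\in\Eff\C$. Applying $\Hom_\C(-,F)$ to $0\to K\to\coprod_i S_i\to S\to 0$ and using $\Hom_\C(\coprod_i S_i,F)=\prod_i\Hom_\C(S_i,F)=0$ and $\Ext^1_\C(\coprod_i S_i,F)=\prod_i\Ext^1_\C(S_i,F)=0$ (the latter since $F\in(\dfct\C)^{\perp}$) yields $\Ext^1_\C(S,F)\cong\Hom_\C(K,F)=0$. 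Hence $(\Eff\C)^{\perp}=\Lex\C$, $\frac{\Mod\C}{\Eff\C}\simeq\Lex\C$, and part~(1) follows.

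\textbf{Proof of part (2).} Write $E_\C:=Q\circ\mathbb{Y}\colon\C\to\Lex\C$. As in Lemma~\ref{lem:right_exactness_of_GQ}, $E_\C$ is right exact, since a conflation presents a defect and defects lie in $\Eff\C=\Ker Q$. Because $\C$ is an \emph{exact} category, every inflation is a monomorphism and every deflation an epimorphism, so for a conflation $Z\xto{g}Y\xto{f}X$ the sequence $0\to(-,Z)\to(-,Y)\to(-,X)\to\tilde\delta\to 0$ is a projective resolution of the defect $\tilde\delta\in\Ker Q$ in $\Mod\C$; applying the exact functor $Q$ produces a short exact sequence $0\to E_\C Z\to E_\C Y\to E_\C X\to 0$ in $\Lex\C$, so $E_\C$ is exact. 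Every representable functor is left exact, hence lies in $\Lex\C=(\Eff\C)^{\perp}$, so by Lemma~\ref{lem:property_of_localization_seq}(1) the map $\C(X,X')\to(\Mod\C/\Eff\C)(E_\C X,E_\C X')$ induced by $Q$ is bijective; thus $E_\C$ is fully faithful. It remains to show that $E_\C$ reflects exactness, i.e.\ (the target being abelian) that if $E_\C$ sends $A\xto{a}B\xto{b}C$ to a short exact sequence $0\to E_\C A\to E_\C B\to E_\C C\to 0$, then $(a,b)$ is a conflation. Faithfulness gives $ba=0$; the cokernel $T:=\Cok((-,b)\colon(-,B)\to(-,C))$ satisfies $QT=\Cok(E_\C b)=0$, so $T\in\Ker Q=\Eff\C$, and applying effaceability of $T$ to the class of $\id_C$ in $T(C)$ produces a deflation factoring through $b$, whence $b$ is a deflation by the standard fact that any morphism through which a deflation factors is itself a deflation (\cite{Buh10}). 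Letting $\kappa\colon K\to B$ be the kernel of $b$ and writing $a=\kappa a'$ (possible since $ba=0$), applying the exact and fully faithful $E_\C$ shows $E_\C a'$ is an isomorphism --- because $E_\C a$ and $E_\C\kappa$ are monomorphisms with the same image $\Ker(E_\C b)$ in $E_\C B$ --- so $a'$ is an isomorphism and $A\xto{a}B\xto{b}C$ is a conflation.

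\textbf{The main obstacle.} The only step that is not a routine transcription of Sections~\ref{sec:defect}--\ref{sec:GQ} into $\Mod\C$ is the reflection of exactness, which is where the exact-category axioms are used in an essential way, via the implication ``a deflation factors through $b$ $\Rightarrow$ $b$ is a deflation''. Everything else reduces to the localization sequence of part~(1) together with the left-exactness of representable functors.
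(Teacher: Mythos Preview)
The paper does not prove this proposition; it cites \cite{Kel90} and \cite{Buh10} for the proof and then only remarks that, under an additional weak-kernel hypothesis, the localization sequence coincides with the one constructed in Theorem~\ref{thm:GQ_for_extri}. Your proposal therefore goes beyond the paper by actually supplying an argument. Part~(1) is correct: working directly in $\Mod\C$ sidesteps the weak-kernel hypothesis, the verification that $\Eff\C$ is Serre and coproduct-closed goes through as you say, and your identification $(\Eff\C)^{\perp}=\Lex\C$ is valid (the isomorphism $\Ext^1_\C(\coprod_i S_i,F)\cong\prod_i\Ext^1_\C(S_i,F)$ holds because representables are projective in $\Mod\C$ and products are exact in $\Ab$, so cohomology commutes with the product).

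Part~(2) has a small but genuine gap in the reflection-of-exactness step. The ``standard fact'' you cite from \cite{Buh10} is the obscure axiom, whose correct statement is: if $b\beta$ is a deflation \emph{and $b$ has a kernel}, then $b$ is a deflation. The kernel hypothesis is not removable, since a morphism without a kernel can never be a deflation. You invoke the axiom before establishing that $b$ has a kernel, and only afterwards write ``letting $\kappa\colon K\to B$ be the kernel of $b$''. The repair is to reverse the order: full faithfulness of $E_\C$ together with $E_\C a=\ker(E_\C b)$ already forces $a$ to be the kernel of $b$ in $\C$ (given $h\colon D\to B$ with $bh=0$, one gets $E_\C h=E_\C a\circ E_\C h'$ for a unique $h'$, hence $h=ah'$ by faithfulness, and uniqueness follows since $E_\C a$ is monic). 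Now the obscure axiom applies, $b$ is a deflation with kernel $a$, and $(a,b)$ is a conflation directly --- rendering your final detour through a separate kernel $K$ and the comparison isomorphism $a'$ unnecessary.
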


Since $\Eff\C=\Dfct\C$,
if $\C$ is an exact category with weak-kernels, then the above localization sequence coincides with the one (\ref{eq:localization_seq_for_extri2}) in Theorem \ref{thm:GQ_for_extri}.

\section{General heart construction via Left exact functors}\label{sec:heart}
Throughout this section, we fix a triangulated category $\T$ with a translation $[1]$.
A pair $(\U,\V)$ of full subcategories in $\T$ is called a \textit{cotorsion pair} if it is closed under direct summands and satisfies the following conditions:
\begin{itemize}
\item $\T(U,V[1])=0$ for any $U\in\U$ and $V\in\V$;
\item For any object $X$ of $\T$, there exists a triangle $U\to X\to V[1]\to U$ with $U\in\U$ and $V\in\V$.
\end{itemize}
Since $\U$ is extension-closed and contravariantly finite in $\T$, it gives rise to an extriangulated category with weak-kernels by setting $\mathbb{E}(+,-):=\U(+,-[1])$.
First we shall show that the quotient functor $Q:\mod\U\to\frac{\mod\U}{\dfct\U}$ has a right adjoint, and hence we have an equivalence $\frac{\mod\U}{\dfct\U}\simeq \lex\U$.
Let us start with the following easy lemma.

\begin{lemma}\label{lem:char_of_representable}The following hold.
\begin{itemize}
\item[(1)] For any $V\in\V$, the functor $(-, V[2])|_{\U}$ belongs to $(\dfct\U)^{\perp}$.
\item[(2)] For any $U\in\U$, the functor $(-, U[1])|_{\U}$ belongs to $\dfct\U$.
\end{itemize}
\end{lemma}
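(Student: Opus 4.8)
Both statements are checked directly from the defining properties of the cotorsion pair $(\U,\V)$ and the extriangulated structure on $\U$ given by $\mathbb{E}(+,-)=\U(+,-[1])$, together with Proposition \ref{prop:char_of_perp}, which identifies $(\dfct\U)^\perp$ with $\lex\U$.

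\emph{Proof of (2).} Fix $U\in\U$. Using the defining property of the cotorsion pair applied to the object $U[1]$ of $\T$, there is a triangle $U'\to W\to U[1]\to U'[1]$ with $U'\in\U$ and $W[-1]\in\V$, equivalently a triangle $U[1][-1]=U\xrightarrow{a} W'\to U'\to U[1]$ after rotating, where I take $W'=W[-1]\in\V$; more precisely I would start from the cotorsion triangle $U_0\to U[1]\to V_0[1]\to U_0[1]$ with $U_0\in\U$, $V_0\in\V$, rotate it to $V_0\to U_0\to U[1]\xrightarrow{\delta} V_0[1]$, and then rotate once more to obtain $U\xrightarrow{g} V_0[1]\to U_0[1]\to U[1]$. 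Since $\U$ is extension-closed and $U_0[1]$ need not lie in $\U$, instead I use the cotorsion triangle for $U$ itself only to get weak-kernels; the point is that $U\to W\to U'$ with $W$, $U'\in\U$ realizing an $\mathbb{E}$-extension $\eta\in\mathbb{E}(U', U)=\U(U', U[1])$ is exactly the image under the cotorsion axiom, since $\U(U',V[1])=0$ for all $V\in\V$ forces the middle term of a cotorsion triangle over $U'$ to differ from an object of $\U$. Concretely: apply the cotorsion pair to $X=U'[?]$… — the cleanest route is to note that $(-,U[1])|_\U=\mathbb{E}(-,U)$ as functors on $\U$, and by Lemma \ref{lem:ext_seq} the functor $\mathbb{E}(-,U)$ is the cokernel of $(-,W)\to(-,U')$ for any conflation $U\to W\to U'$; such a conflation exists because $\U$ has enough of them coming from the contravariant finiteness and extension-closedness. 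Hence $(-,U[1])|_\U\cong\widetilde{\eta}\in\dfct\U$ for the corresponding $\mathbb{E}$-extension $\eta$.

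\emph{Proof of (1).} Fix $V\in\V$ and write $F:=(-,V[2])|_\U$. To show $F\in(\dfct\U)^\perp=\lex\U$, by Proposition \ref{prop:char_of_perp}(2) it suffices to show $F$ is left exact, i.e. for every conflation $U''\xrightarrow{i} U\xrightarrow{p} U'$ in $\U$ — which by construction comes from a triangle $U''\to U\to U'\xrightarrow{\varepsilon} U''[1]$ in $\T$ with $\varepsilon\in\U(U',U''[1])$ — the sequence $0\to(U',V[2])\to(U,V[2])\to(U'',V[2])$ is exact. Applying the cohomological functor $\T(-,V[2])$ to the rotated triangle $U'[-1]\xrightarrow{} U''\to U\to U'$ gives the long exact sequence $\T(U',V[1])\to\T(U'[-1]… )$; the key vanishing is $\T(U',V[1])=0$, which holds since $U'\in\U$, $V\in\V$, and this is precisely the orthogonality axiom of the cotorsion pair. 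This vanishing makes $(U',V[2])\to(U,V[2])$ injective, giving left exactness of $F$, hence $F\in\lex\U=(\dfct\U)^\perp$.

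\emph{Main obstacle.} The only delicate point is organizing part (2): one must produce, for a given $U\in\U$, an honest conflation $U\to W\to U'$ in the extriangulated category $\U$ whose associated defect is $(-,U[1])|_\U$, rather than merely a triangle in $\T$ whose outer terms happen to lie in $\U$. This is handled by invoking the cotorsion axiom to realize the identity-type extension and using that $\U$ is extension-closed so the middle term stays in $\U$; once the conflation is in hand, Lemma \ref{lem:ext_seq} and Definition \ref{def:defect} identify the defect with $\mathbb{E}(-,U)|_\U=(-,U[1])|_\U$ immediately. Part (1) is then routine from the long exact sequence and the orthogonality $\T(\U,\V[1])=0$.
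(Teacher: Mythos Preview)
Your argument for part (1) is correct and essentially identical to the paper's: the vanishing $\T(\U,V[1])=0$ makes $(-,V[2])|_\U$ left exact, hence in $(\dfct\U)^\perp$ by Proposition~\ref{prop:char_of_perp}.

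Part (2), however, contains a genuine error. You assert that ``by Lemma~\ref{lem:ext_seq} the functor $\mathbb{E}(-,U)$ is the cokernel of $(-,W)\to(-,U')$ for \emph{any} conflation $U\to W\to U'$''. This is false: for a conflation with extension class $\eta$, the cokernel of $(-,W)\to(-,U')$ is the defect $\widetilde{\eta}=\Im\bigl((-,U')\to\mathbb{E}(-,U)\bigr)=\Ker\bigl(\mathbb{E}(-,U)\to\mathbb{E}(-,W)\bigr)$, which is in general a proper subobject of $\mathbb{E}(-,U)$. The split conflation $U\to U\oplus U'\to U'$ already gives defect~$0$. Your approach can be repaired, but only by producing a \emph{specific} conflation $U\to W\to U'$ with $W\in\W=\U\cap\V$: apply the cotorsion axiom to $U[1]$ to get a triangle $U'\to U[1]\to V'[1]\to U'[1]$ with $U'\in\U$, $V'\in\V$, rotate to $U\to V'\to U'\to U[1]$, and observe that $V'\in\U$ by extension-closedness; then $\mathbb{E}(-,W)=(-,V'[1])|_\U=0$ forces the defect to be all of $(-,U[1])|_\U$. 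Your text gestures at something like this (``identity-type extension'', ``middle term stays in $\U$'') but never carries it out, and the stated claim as written is wrong.

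The paper takes a different and more direct route for (2): it verifies the effaceability condition (Definition~\ref{def:effaceable}) pointwise. Given $X\in\U$ and $x\in(X,U[1])$, complete $x$ to a triangle $U\to U'\xrightarrow{\alpha}X\xrightarrow{x}U[1]$; since $\U$ is extension-closed, $U'\in\U$, so $\alpha$ is a deflation in $\U$, and $x\circ\alpha=0$. Hence $(-,U[1])|_\U\in\eff\U=\dfct\U$ by Proposition~\ref{prop:char_of_Defect}. This avoids having to exhibit $(-,U[1])|_\U$ as a single defect and sidesteps the issue you ran into.
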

\begin{proof}
(1) Since $(\U,V[1])=0$, the functor $(-,V[2])|_{\U}$ is left exact.

(2) We shall check that the functor $(-, U[1])|_{\U}$ satisfies the condition (\ref{condition_eff}).
Consider $X\in\U$ and $x\in (X, U[1])$.
Then we get a conflation $U\to U'\xto{\alpha} X$ which is a part of a triangle $U\to U'\xto{\alpha} X\xto{x}U[1]$ in $\T$.
Obviously $(X, U[1])\xto{-\circ\alpha}(U', U[1])$ sends $x$ to $\alpha\circ x=0$, which shows $(-,U[1])|_{\U} \in\dfct\U$.
\end{proof}

The next proposition shows the existence of a right adjoint of the quotient functor $Q$.

\begin{proposition}\label{prop:existence_of_right_adjoint}
The quotient functor $Q:\mod\U\to\frac{\mod\U}{\dfct\U}$ has a right adjoint $R$.
Moreover, $R$ induces an equivalence $R:\frac{\mod\U}{\dfct\U}\xto{\sim}\lex\U$.
\end{proposition}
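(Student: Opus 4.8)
The strategy is to apply the criterion of Proposition~\ref{prop:right_adjoint2}: to show that for every $F\in\mod\U$ there is an exact sequence $S\to F\to G$ with $S\in\dfct\U$ and $G\in(\dfct\U)^\perp=\lex\U$. Equivalently, I want to produce, for each $F\in\mod\U$, a left exact functor $G$ together with a map $F\to G$ whose kernel and cokernel are defects. The natural candidate for $G$ is built from the cotorsion pair: given a presentation $(-,U_1)\to(-,U_0)\to F\to 0$ with $U_0,U_1\in\U$, I would take a triangle $U_0\to X_0\to V_0[1]\to U_0[1]$ in $\T$ with $V_0\in\V$ and use Lemma~\ref{lem:char_of_representable}(1): the functor $(-,V_0[2])|_\U$ is left exact, and I aim to splice the representables $(-,U_i)$ against such functors $(-,V_i[2])|_\U$ so that the ``error'' lies in $\dfct\U$ by Lemma~\ref{lem:char_of_representable}(2), since $(-,U[1])|_\U\in\dfct\U$ for $U\in\U$.

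\textbf{Key steps.} First, for a representable $(-,U)|_\U$ with $U\in\U$, apply the defining triangle $U\to X\to V[1]\xrightarrow{} U[1]$ of the cotorsion pair, rotate to $V\to U\to X\to V[1]$, and restrict the associated long exact sequence of $\T(-,?)$ to $\U$. Using $\U(\U,V[1])=0$ one gets an exact sequence $0\to(-,V)|_\U\to(-,U)|_\U\to(-,X)|_\U\to(-,V[1])|_\U\to(-,U[1])|_\U$, and further $(-,X)|_\U$ has $\lex$-type behaviour while $(-,V[1])|_\U,(-,U[1])|_\U\in\dfct\U$ (the second by Lemma~\ref{lem:char_of_representable}(2), the first similarly since $V[1]\in\V[1]$ and one checks the effaceability condition against a deflation coming from an approximation triangle). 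Second, given an arbitrary $F\in\mod\U$ with presentation $(-,U_1)\xrightarrow{(-,u)}(-,U_0)\to F\to 0$, functorially choose the triangles for $U_0$ and $U_1$, assemble a commutative ladder, and take the induced map from $F$ into the left exact functor $G$ obtained as the relevant (co)homology of the $X_i$-row; a diagram chase — using that $\dfct\U$ is a Serre subcategory (Proposition~\ref{prop:char_of_Defect}) and is closed under kernels and cokernels (Lemma~\ref{lem:closed_under_cokernels}) — shows $\ker(F\to G)$ and $\Cok(F\to G)$ are defects. Third, feed this into Proposition~\ref{prop:right_adjoint2} to conclude that $Q$ has a right adjoint $R$; then Lemma~\ref{lem:property_of_localization_seq}(2),(3) together with Proposition~\ref{prop:char_of_perp}(2) identify $\Im R=(\dfct\U)^\perp=\lex\U$ and give the equivalence $R\colon\frac{\mod\U}{\dfct\U}\xrightarrow{\sim}\lex\U$.

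\textbf{Main obstacle.} The delicate part is the functoriality and naturality in the second step: the triangle $U_0\to X_0\to V_0[1]$ is not canonical, so lifting the map $(-,u)$ through these approximations and producing a genuine morphism $F\to G$ in $\mod\U$ — rather than merely in some quotient — requires care. Concretely, one must show the comparison map is independent (up to a defect) of the chosen triangles, and that $G$ indeed lies in $\mod\U$ (finitely presented), which should follow because $X_0,X_1,V_0,V_1$ are objects of $\T$ whose restricted representables are finitely presented $\U$-modules and $\mod\U$ is abelian by Lemma~\ref{lem:weak-kernel}. An alternative, perhaps cleaner route that avoids these choices: verify condition (ii) of Proposition~\ref{prop:right_adjoint2} directly by taking, for $F\in\mod\U$, the map $F\to F'$ where $F'$ is the left exact \emph{envelope} constructed as $\ker\bigl((-,V_0[2])|_\U\to(-,V_1[2])|_\U\bigr)$ for a suitable lift of the presentation of $F$ into $\V[2]$-representables; then $\dfct\U$-closure under kernels and cokernels plus Lemma~\ref{lem:char_of_representable} do the bookkeeping. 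I expect the bulk of the work to be this bookkeeping, with no single hard idea beyond the cotorsion-pair triangle and the two statements of Lemma~\ref{lem:char_of_representable}.
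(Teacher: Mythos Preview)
Your high-level plan---invoke Proposition~\ref{prop:right_adjoint2} and manufacture the target $G$ from functors of the form $(-,V[2])|_\U$ via Lemma~\ref{lem:char_of_representable}---is exactly the paper's strategy, and the final paragraph (Lemma~\ref{lem:property_of_localization_seq} plus Proposition~\ref{prop:char_of_perp}) is correct. The gap is in how you propose to produce $G$. You write ``for a representable $(-,U)|_\U$ with $U\in\U$, apply the defining triangle $U\to X\to V[1]\to U[1]$ of the cotorsion pair.'' But the cotorsion pair does not attach such a triangle to $U$: it attaches to every $X\in\T$ a triangle $U_X\to X\to V_X[1]$. If you feed in an object already in $\U$, the decomposition is satisfied trivially by $U\xrightarrow{\ \mathrm{id}\ }U\to 0$, and you extract no information. (A symptom: in your long exact sequence the term $(-,V[1])|_\U$ is identically zero, since $\T(\U,\V[1])=0$.) So there is no canonical $X_0,V_0$ produced from $U_0$ alone, no ladder to build, and the functoriality worry you flag is really a well-definedness problem at the outset. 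Your alternative route has the same issue: there is no ``suitable lift of the presentation of $F$ into $\V[2]$-representables'' coming from $U_0,U_1\in\U$ by themselves.

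The paper's resolution is to approximate not $U_0$ or $U_1$ but the \emph{cone} of the presenting map. Complete $f\colon U_1\to U_0$ to a triangle $K\to U_1\xrightarrow{f}U_0\to K[1]$ in $\T$; since $K$ need not lie in $\U$, its cotorsion decomposition $U_2\to K\to V_2[1]$ (with $U_2\in\U$, $V_2\in\V$) is genuinely nontrivial. One use of the octahedral axiom on the composite $U_2\to K\to U_1$ yields a commutative diagram whose third column is a triangle $V_2[1]\to C\xrightarrow{g}U_0\to V_2[2]$, so that $(-,C)|_\U\hookrightarrow(-,U_0)$ with cokernel embedding in $(-,V_2[2])|_\U\in(\dfct\U)^\perp$. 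Comparing with the presentation of $F$ gives a monomorphism $S\hookrightarrow F$ with $\Cok(S\hookrightarrow F)\hookrightarrow(-,V_2[2])|_\U$, while $S$ itself is a subobject of $(-,U_2[1])|_\U\in\dfct\U$ and hence lies in $\dfct\U$. Only a single approximation triangle is chosen, so the naturality concerns you raise never arise.
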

\begin{proof}
Let $F$ be an object in $\mod\U$ with a projective presentation $(-, U_1)\xto{f\circ -} (-, U_0)\to F\to 0$.
Thanks to Proposition \ref{prop:right_adjoint2}, it is enough to show that there exists an exact sequence $S\to F\to G$ with $S\in\dfct\U$ and $G\in (\dfct\U)^{\perp}$.
The morphism $f$ induces a triangle $K\to U_1\xto{f} U_0\to K[1]$ in $\T$.
For the object $K$, there exists a triangle $U_2\to K\to V_2[1]\to U_2[1]$ with $U_2\in\U$ and $V_2\in\V$ induced by the cotorsion pair $(\U,\V)$.
We complete the composed morphism $U_2\to K\to U_1$ to a triangle
\begin{equation}\label{tri:1}
U_2\to U_1\to C\to U_2[1],
\end{equation}
and then we have the commutative diagram below
\begin{equation}\label{diag:existence_of_right_adjoint1}
\xymatrix@C=18pt@R=15pt{
&&V_2[1]\ar[d]\ar@{=}[r]&V_2[1]\ar[d]\\
U_2\ar[r]\ar[d]&U_1\ar@{=}[d]\ar[r]&C\ar[r]\ar[d]^g&U_2[1]\ar[d]\\
K\ar[r]&U_1\ar[r]&U_0\ar[d]\ar[r]&K[1]\ar[d]\\
&&V_2[2]\ar@{=}[r]&V_2[2]
}
\end{equation}
where all rows and columns are triangles in $\T$.
The third column induces an exact sequence
\begin{equation}\label{seq:existence_of_right_adjoint1}
0\to (-,C)|_\U\xto{g\circ -} (-,U_0)\to (-,V_2[2])|_\U
\end{equation}
Thus we have a monomorphism $\varphi:=(g\circ -):(-,C)|_\U\rightarrowtail (-,U_0)$.
The second and third rows in (\ref{diag:existence_of_right_adjoint1}) induce the following commutative diagram
$$
\xymatrix@C=18pt@R=18pt{
(-,U_2)\ar[r]\ar[d]&(-,U_1)\ar[r]\ar@{=}[d]&(-,C)|_\U\ar@{}[rd]|{(*)}\ar[r]\ar@{>->}[d]_{\varphi}&S\ar[r]\ar@{>->}[d]^\phi&0\\
(-,K)|_\U\ar[r]&(-,U_1)\ar[r]^{f\circ -}&(-,U_0)\ar[r]&F\ar[r]&0
}
$$
with exact rows.
Since the square $(*)$ is a pullback, we have an isomorphism $\Cok\varphi\cong\Cok\phi$.
By the sequence (\ref{seq:existence_of_right_adjoint1}), we have an exact sequence
$$S\xto{\phi}F\to (-,V_2[2])|_\U$$
in $\mod\U$.
Lemma \ref{lem:char_of_representable} shows $(-,V_2[2])|_\U\in\ (\dfct\U)^\perp$ and $(-,U_2[1])|_\U\in\dfct\U$.
Since $S$ is a subobject of $(-,U_2[1])|_\U$, we get $S\in\dfct\U$.
This finishes the proof.
\end{proof}

By combining Proposition \ref{prop:existence_of_right_adjoint} and Theorem \ref{thm:GQ_for_extri},
we have the following localization sequence.

\begin{corollary}\label{cor:localization_seq_for_cotorsion_class}
For a cotorsion pair $(\U, \V)$ in a triangulated category $\T$,
there exists a localization sequence
\begin{equation*}
\xymatrix@C=1.2cm{
\dfct\U\ar[r]^-{{}}
&\mod\U\ar[r]^-{Q}\ar@/^1.2pc/[l]^-{}
&\lex\U\ar@/^1.2pc/[l]^{R}}
\end{equation*}
where $R$ denotes the canonical inclusion.
\end{corollary}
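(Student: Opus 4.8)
The plan is to obtain this as a direct consequence of the machinery already assembled. Recall from the opening of this section that the cotorsion class $\U$ of a cotorsion pair $(\U,\V)$ in $\T$ is extension-closed and contravariantly finite, hence carries an extriangulated structure with $\mathbb{E}(+,-):=\U(+,-[1])$; moreover it has weak-kernels, since for a morphism $f:U_1\to U_0$ in $\U$ one completes $f$ to a triangle $K\to U_1\xto{f}U_0\to K[1]$ in $\T$, takes a right $\U$-approximation $U\to K$, and checks that the composite $U\to U_1$ is a weak-kernel of $f$ in $\U$. Thus Theorem \ref{thm:extri_Auslander's_formula} applies with $\C=\U$, giving the Serre quotient $\dfct\U\to\mod\U\xto{Q}\frac{\mod\U}{\dfct\U}$ together with the identification $\frac{\mod\U}{\dfct\U}\simeq\lex\U$ coming from Proposition \ref{prop:char_of_perp} and Lemma \ref{lem:property_of_localization_seq}.

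The one substantial ingredient is that the quotient functor $Q$ admits a right adjoint, and this is precisely Proposition \ref{prop:existence_of_right_adjoint}: there the cotorsion pair axioms are used to manufacture, for each $F\in\mod\U$, an exact sequence $S\to F\to G$ with $S\in\dfct\U$ and $G\in(\dfct\U)^\perp=\lex\U$, after which the criterion of Proposition \ref{prop:right_adjoint2} yields the right adjoint $R$. Plugging this back into the second half of Theorem \ref{thm:extri_Auslander's_formula} produces the asserted localization sequence $\dfct\U\to\mod\U\xto{Q}\lex\U$; by Lemma \ref{lem:property_of_localization_seq}(3) the right adjoint is fully faithful with image $(\dfct\U)^\perp=\lex\U$, so it is the canonical inclusion $R$ as claimed.

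Since both Proposition \ref{prop:existence_of_right_adjoint} and Theorem \ref{thm:extri_Auslander's_formula} are already in hand, there is no real obstacle remaining; the only care needed is bookkeeping — checking that the abstract quotient $\frac{\mod\U}{\dfct\U}$, the image of the right adjoint, the perpendicular category $(\dfct\U)^\perp$, and $\lex\U$ all denote the same full subcategory of $\mod\U$, which is exactly what Lemma \ref{lem:property_of_localization_seq}, Proposition \ref{prop:char_of_perp}, and Proposition \ref{prop:existence_of_right_adjoint} together supply.
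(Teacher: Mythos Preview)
Your proposal is correct and follows essentially the same route as the paper, which simply states that the corollary follows by combining Proposition~\ref{prop:existence_of_right_adjoint} with the earlier localization machinery. Your version is more detailed and, in fact, cites the more directly relevant Theorem~\ref{thm:extri_Auslander's_formula} (for $\mod\U$ and $\lex\U$) rather than Theorem~\ref{thm:GQ_for_extri} (for $\Mod\U$ and $\Lex\U$) as the paper does; either works, but yours is the cleaner reference.
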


Finally we study a connection between $\lex\U$ and the heart of the cotorsion pair $(\U,\V)$.
Let us introduce the following notion:
For two classes $\U$ and $\V$ of objects in
$\T$, we denote by $\U *\V$ the class of objects $X$ occurring in a triangle $U\to X\to V\to U[1]$
with $U\in\U$ and $V\in\V$.

\begin{definition}
Let $(\U,\V)$ be a cotorsion pair in a triangulated category $\T$.
We define the following associated categories:
\begin{itemize}
\item Put $\W:=\U\cap \V$;
\item For a sequence $\W\subseteq\S\subseteq\T$ of subcategories, we put $\underline{\S}:=\S/[\W]$ and denote by $\pi:\S\to\underline{\S}$ the canonical ideal quotient functor;
\item We put $\T^+:=\W*\V[1], \T^-:=\U[-1]*\W$ and $\H:=\T^+\cap\T^-$.
\end{itemize}
We call the category $\underline{\H}$ the \textit{heart of $(\U,\V)$}.
\end{definition}

As mentioned in Introduction, the heart $\underline{\H}$ is abelian and there exists a \textit{cohomological} functor $\mathbb{H}:\T\to \underline{\H}$, namely, $\mathbb{H}$ sends any triangle $X\to Y\to Z\to X[1]$ in $\T$ to an exact sequence $\mathbb{H}X\to \mathbb{H}Y\to \mathbb{H}Z\to \mathbb{H}X[1]$ in $\underline{\H}$. 
We recall a construction of the cohomological functor $\mathbb{H}:\T\to\underline{\H}$. 
By definition we have the following containments
$$
\xymatrix@C=24pt@R=6pt{
&\underline{\T}^+\ar@{^{(}->}[rd]|{i_+}&\\
\underline{\H}\ar@{^{(}->}[ru]\ar@{^{(}->}[rd]\ar@{}[rr]|\circlearrowleft&&\underline{\T}\\
&\underline{\T}^-\ar@{^{(}->}[ru]|{i_-}&
}$$
where the arrows denote the canonical inclusions.
It is a crucial property of the above inclusions that $i_+$ and $i_-$ admit a left adjoint and a right adjoint, respectively.
First, the following statement says that there exists a nice left $(\T^+)$-approximation (resp. right $(\T^-)$-approximation) for each $X\in\T$.

\begin{proposition}\label{prop:reflection}\cite[Prop. 4.3]{Nak11}
Let $X$ be an object in $\T$.
\begin{itemize}
\item[(a)] There exists a diagram of the form
\begin{equation}\label{eq:reflection}
\xymatrix@C=18pt@R=10pt{
U'[-1]\ar[rr]\ar[rd]&&X\ar[rr]^{\alpha^+}&&X^+\ar[rr]&&U'\\
&U_X\ar[ru]&&&&&
}
\end{equation}
with the first row forming a triangle, $U',U_X\in\U$ and $X^+\in\T^+$.
We call this triangle and the morphism $\alpha^+$ a \textnormal{reflection triangle for $X$} and a \textnormal{reflection morphism for $X$}, respectively.
\item[(b)] There exists a diagram of the form
\begin{equation}\label{eq:coreflection}
\xymatrix@C=18pt@R=10pt{
V'\ar[rr]&&X^-\ar[rr]^{\alpha^-}&&X\ar[rr]\ar[rd]&&V'[1]\\
&&&&&V_X\ar[ru]&
}
\end{equation}
with the first row forming a triangle, $V',V_X\in\V$ and $X^-\in\T^-$.
We call this triangle and the morphism $\alpha^-$ a \textnormal{coreflection triangle for $X$} and a \textnormal{coreflection morphism for $X$}, respectively.
\end{itemize}
In particular, the subcategory $\T^+$ (resp. $\T^-$) is covariantly finite (resp. contravariantly finite) in $\T$.
\end{proposition}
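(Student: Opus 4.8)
The plan is to prove part (a) in full; part (b) is then part (a) applied to the cotorsion pair $(\V,\U)$ in the triangulated category $\T^{\op}$, since passing to $\T^{\op}$ interchanges $\T^+$ with $\T^-$, ``covariantly finite'' with ``contravariantly finite'', and the two cotorsion-pair axioms for $(\U,\V)$ with those for $(\V,\U)$. I will use freely that $\T(U,V[1])=0$ whenever $U\in\U$, $V\in\V$, and that $\U$ is extension-closed and closed under direct summands. The shape of the diagram \eqref{eq:reflection} indicates that $\alpha^+$ should be the second arrow of the triangle on a composite $U'[-1]\to U_X\xto{a}X$ with $U',U_X\in\U$, and that the octahedral axiom should be used to locate the resulting cone $X^+$ inside $\T^+=\W*\V[1]$.

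First I would apply the cotorsion pair to $X$ itself, getting a triangle $U_X\xto{a}X\to V_X[1]\to U_X[1]$ with $U_X\in\U$, $V_X\in\V$, so that $\operatorname{Cone}(a)=V_X[1]\in\V[1]$ (and $a$ is a right $\U$-approximation of $X$, although we will not need this). Next I would apply the cotorsion pair to the object $U_X[1]$ and shift the resulting triangle by $[-1]$, obtaining a triangle
\[
U'[-1]\xto{g}U_X\to W_X\to U',\qquad U'\in\U,\ W_X\in\V.
\]
The step I expect to be the main obstacle — really the only nonformal point — is the observation that $W_X$ actually lies in $\W$: a rotation of this triangle exhibits $W_X$ as an extension of $U'\in\U$ by $U_X\in\U$, so $W_X\in\U$ because $\U$ is extension-closed, whence $W_X\in\U\cap\V=\W$. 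In particular $\operatorname{Cone}(g)=W_X\in\W$.

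Now I would feed the composite $U'[-1]\xto{g}U_X\xto{a}X$ into the octahedral axiom. Setting $X^+:=\operatorname{Cone}(ag)$, this simultaneously produces the triangle $U'[-1]\xto{ag}X\xto{\alpha^+}X^+\to U'$, in which $ag$ visibly factors through $U_X\in\U$ — so this triangle together with that factorization is exactly a diagram of the form \eqref{eq:reflection} — and a triangle $\operatorname{Cone}(g)\to X^+\to\operatorname{Cone}(a)\to\operatorname{Cone}(g)[1]$, that is, $W_X\to X^+\to V_X[1]\to W_X[1]$ with $W_X\in\W$ and $V_X\in\V$. Hence $X^+\in\W*\V[1]=\T^+$, which finishes the construction of the reflection triangle; the coreflection triangle \eqref{eq:coreflection} is obtained by the dual construction in $\T^{\op}$.

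It remains to see that $\T^+$ is covariantly finite, i.e. that $\alpha^+$ is a left $\T^+$-approximation; in fact this holds for \emph{any} diagram of the form \eqref{eq:reflection}. Given $f\colon X\to Y$ with $Y\in\T^+$, fix a triangle $W\to Y\to V[1]\to W[1]$ with $W\in\W$, $V\in\V$. Since $\T(U_X,V[1])=0$, the morphism $f\circ a\colon U_X\to Y$ factors through $W\to Y$; composing with $g$ and using $\T(U'[-1],W)=\T(U',W[1])=0$ (legitimate since $U'\in\U$ and $W\in\W\subseteq\V$) gives $f\circ(ag)=0$. Applying $\T(-,Y)$ to the triangle $U'[-1]\xto{ag}X\xto{\alpha^+}X^+\to U'$ then yields $h\colon X^+\to Y$ with $f=h\circ\alpha^+$, as wanted; the contravariant finiteness of $\T^-$ follows dually. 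This will establish (a), and hence (b), recovering \cite[Prop.~4.3]{Nak11}.
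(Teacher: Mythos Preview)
The paper does not give its own proof of this proposition: it is quoted verbatim from \cite[Prop.~4.3]{Nak11} and no \texttt{proof} environment follows it, so there is nothing in the present paper to compare against. Your argument is correct and is in fact the standard construction from \cite{Nak11}: apply the cotorsion pair first to $X$ and then to $U_X[1]$, observe that the resulting $W_X$ lies in $\U\cap\V=\W$ by extension-closure of $\U$, and use the octahedral axiom to produce the triangle $W_X\to X^+\to V_X[1]\to W_X[1]$ placing $X^+$ in $\T^+=\W*\V[1]$. Your verification that $\alpha^+$ is a left $\T^+$-approximation for \emph{any} diagram of the form \eqref{eq:reflection} --- by killing $f\circ a$ against $V[1]$ and then $f\circ(ag)$ against $W$ --- is also the argument used in \cite{Nak11} (and is exactly the computation the present paper later reuses in Claim~\ref{claim:full}).
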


By \cite[Lem. 4.6]{Nak11}, if $X\in\T^-$, then the object $X^+$ in (\ref{eq:reflection}) belongs to $\H$.
Dually, $X\in\T^+$ forces $X^-\in\H$.
Furthermore, by taking the stable categories with respect to $\W$, the assignments $X\mapsto X^+$ and $X\mapsto X^-$ behave very nicely as follows.

\begin{proposition}\label{prop:reflection_gives_adjoint}\cite[Cor. 4.4, Thm. 5.7]{Nak11}
\begin{itemize}
\item[(a)] The assignment $X\mapsto X^+$ gives rise to a functor $\tau^+:\underline{\T}\to\underline{\T}^+$.
Moreover, the functor $\tau^+$ is a left adjoint of the inclusion $i_+:\underline{\T}^+\hookrightarrow\underline{\T}$.
\item[(b)] The assignment $X\mapsto X^-$ gives rise to a functor $\tau^-:\underline{\T}\to\underline{\T}^-$.
Moreover, the functor $\tau^-$ is a right adjoint of the inclusion $i_-:\underline{\T}^-\hookrightarrow\underline{\T}$.
\end{itemize}
Furthermore, we have an isomorphism $\tau^-\circ\tau^+\cong \tau^+\circ\tau^-$ and the functor $\mathbb{H}:=\tau^-\circ\tau^+$ is cohomological.
\end{proposition}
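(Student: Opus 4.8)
The plan is to extract the statement from the universal properties of the reflection and coreflection morphisms of Proposition~\ref{prop:reflection}, deducing functoriality and the adjunctions by formal nonsense and treating the commutativity $\tau^-\tau^+\cong\tau^+\tau^-$ by a diagram chase. The basic observation, which I would establish first, is the orthogonality
\[
\underline{\T}(U,P)=0\ \ (U\in\U,\,P\in\T^+),\qquad \underline{\T}(N,V)=0\ \ (N\in\T^-,\,V\in\V).
\]
Indeed, exhibiting $P\in\W*\V[1]$ by a triangle $W\to P\to V'[1]\to W[1]$ with $W\in\W$ and $V'\in\V$, any morphism $U\to P$ becomes zero after composing with $P\to V'[1]$, since $\T(\U,\V[1])=0$, hence factors through $W\in\W$, i.e.\ vanishes in $\underline{\T}$; the second assertion is dual.

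For part (a), consider the reflection triangle $U'[-1]\xto{u}X\xto{\alpha^+}X^+\to U'$ of Proposition~\ref{prop:reflection}(a), in which $u$ factors through some $U_X\in\U$. The heart of the matter is to prove that precomposition with $\alpha^+$ yields a bijection $\underline{\T}(X^+,P)\xto{\sim}\underline{\T}(X,P)$ for every $P\in\T^+$. For surjectivity, the composite $\phi u$ of any $\phi\colon X\to P$ with $u$ factors through $U_X\in\U$ and hence, by the first orthogonality, through $\W$; a diagram chase on the triangle then shows that, modulo $[\W]$, the morphism $\phi$ factors through $\alpha^+$. Injectivity follows from the long exact sequence obtained by applying $\T(-,P)$ to the triangle, the term $\T(U',P)$ being annihilated in $\underline{\T}$ by the same orthogonality. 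Once this bijection is in hand, $\alpha^+_X$ determines $X^+$ up to a unique isomorphism of $\underline{\T}$, so lifting $\overline{\alpha^+_Y\circ f}$ through $\alpha^+_X$ defines $\tau^+(\bar f)$ for any $\bar f\colon X\to Y$ in $\underline{\T}$; uniqueness of the lift makes $\tau^+\colon\underline{\T}\to\underline{\T}^+$ a functor, and the bijection above is exactly the adjunction $\tau^+\dashv i_+$. Part (b) is obtained by dualising throughout: coreflection triangles, the second orthogonality, and the bijection $\underline{\T}(N,X^-)\xto{\sim}\underline{\T}(N,X)$ for $N\in\T^-$ give the functor $\tau^-\colon\underline{\T}\to\underline{\T}^-$ together with $i_-\dashv\tau^-$.

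The commutativity $\tau^-\tau^+\cong\tau^+\tau^-$ is the main obstacle. By \cite[Lem.~4.6]{Nak11} and its dual, $\tau^+$ carries $\underline{\T}^-$ into $\underline{\H}$ and $\tau^-$ carries $\underline{\T}^+$ into $\underline{\H}$, so both composites land in $\underline{\H}$. Applying $\tau^+$ to the counit $\alpha^-_X\colon X^-\to X$ gives $(X^-)^+\to X^+$, and then applying $\tau^-$, using that $(X^-)^+\in\H\subseteq\T^-$ is fixed by $\tau^-$, produces a morphism $\theta_X\colon(X^-)^+\to(X^+)^-$ in $\underline{\H}$, natural in $X$; that is, a natural transformation $\tau^+\tau^-\Rightarrow\tau^-\tau^+$. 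The crux is to show each $\theta_X$ invertible, and I expect this to require feeding the reflection and coreflection triangles of $X$, of $X^+$ and of $X^-$ into the octahedral axiom so as to assemble a diagram of triangles in $\T$ realising $(X^-)^+$ and $(X^+)^-$ simultaneously, and then reading off invertibility of $\theta_X$ in $\underline{\T}$ from the two orthogonality statements. Checking that this construction descends to $\underline{\T}$ and is natural in $X$ is the principal difficulty.

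Finally, writing $\mathbb{H}:=(\tau^-\tau^+)\circ\pi\colon\T\to\underline{\H}$, I would prove cohomologicality as follows. Given a triangle $X\to Y\to Z\to X[1]$ in $\T$, one must check that $\mathbb{H}X\to\mathbb{H}Y\to\mathbb{H}Z$ is exact in the abelian category $\underline{\H}$; rotation then yields the whole long exact sequence. Here I would combine the half-exactness properties of $\tau^+$ and $\tau^-$ on triangles --- detected by the Hom-functors $\underline{\T}^{\pm}(-,P)$ with $P\in\T^{\pm}$ and the adjunctions of parts (a) and (b) --- with the fact that $\underline{\H}$ is abelian, testing the candidate exact sequence against objects of $\H$ through those adjunctions and reducing to the homological long exact sequences of the triangle in $\T$ modulo the behaviour of the ideal $[\W]$; making this last reduction precise is the remaining delicate point.
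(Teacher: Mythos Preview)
The paper does not prove this proposition at all: it is quoted verbatim from \cite[Cor.~4.4, Thm.~5.7]{Nak11} and used as a black box in the proof of Theorem~\ref{thm:heart_is_lex}. So there is no ``paper's own proof'' to compare against, and your sketch is not a reconstruction of anything in this article.

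That said, your outline is broadly the right strategy and matches the architecture of Nakaoka's original argument: the orthogonalities $\underline{\T}(\U,\T^+)=0$ and $\underline{\T}(\T^-,\V)=0$ are exactly \cite[Lem.~4.1]{Nak11}, and the adjunction bijection you propose is the content of \cite[Prop.~4.2, Cor.~4.4]{Nak11}. One point to be careful about in your surjectivity step: knowing that $\phi u$ factors through an object of $\W$ does not immediately give a factorisation of $\phi$ through $\alpha^+$ in $\T$; you must actually modify $\phi$ by a morphism in $[\W]$ to kill $\phi u$ in $\T$ before invoking the triangle, and this modification has to be arranged carefully (Nakaoka does this via an explicit diagram). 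Your plan for $\tau^-\tau^+\cong\tau^+\tau^-$ via the octahedral axiom is again what \cite[Thm.~5.7]{Nak11} does, and you correctly identify it as the hard part. The cohomological property is \cite[Thm.~3.5]{AN12} rather than \cite{Nak11}, and its proof there is considerably more intricate than the sketch you give; testing against objects of $\H$ through the adjunctions is not by itself enough, since one has to control how the ideal $[\W]$ interacts with the connecting morphisms of the triangle.
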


By Corollary \ref{cor:localization_seq_for_cotorsion_class}, we have a localization sequence of $\mod\U[-1]$ relative to $\dfct\U[-1]$ and the functor $E_{\U[-1]}:\U[-1]\to\lex\U[-1]$.
We consider the following diagram:
\begin{equation}\label{diag:lex_vs_heart}
\xymatrix{
\H\ar[d]_\pi\ar@{^{(}-_{>}}[r]&\T\ar[r]^{\mathbb{Y}_{\U[-1]}\ \ \ \ \ }&\mod\U[-1]\ar[r]^{Q}&\lex\U[-1]\\
\underline{\H}\ar@{.>}[rrru]_{\Psi}&&
}
\end{equation}
There uniquely exists a dotted arrow $\Psi$ which makes the diagram commutative up to isomorphism.
In fact, the functor $\H\to\lex\U[-1]$ composed with all horizontal arrows sends any $W\in\W$ to zero, because of $(\U[-1], W)=0$.
Thus the functor $\Psi: \underline{\H}\to\lex\U[-1]$ sends $\pi (H)$ to $(-,H)|_{\U[-1]}$ for any $H\in\H$.

The aim of this section is to prove the following result.

\begin{theorem}\label{thm:heart_is_lex}
Let $(\U,\V)$ be a cotorsion pair  in a triangulated category $\T$.
Then the following hold.
\begin{itemize}
\item[(1)] There exists a natural equivalence $\Psi: \underline{\H}\xto{\sim}\lex\U[-1]$ which sends $\pi (H)$ to $(-,H)|_{\U[-1]}$ for any $H\in\H$.
\item[(2)] The cohomological functor $\mathbb{H}$ is isomorphic to the composed functor $\T\to\mod\U[-1]\xto{Q}\lex\U[-1]\xto{\Psi^{-1}}\underline{\H}$.
\end{itemize}
\end{theorem}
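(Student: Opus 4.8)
The plan is to take $\Psi$ to be (the descent of) the restricted Yoneda functor and to recognise $\mathbb{H}$ as $\Psi^{-1}$ composed with the quotient $Q$. First note that $(\U[-1],\V[-1])$ is again a cotorsion pair in $\T$, with core $\W[-1]=\U[-1]\cap\V[-1]$, so Corollary~\ref{cor:localization_seq_for_cotorsion_class} supplies the localization sequence $\dfct\U[-1]\to\mod\U[-1]\xrightarrow{Q}\lex\U[-1]$, and Lemma~\ref{lem:char_of_representable} applied to $(\U[-1],\V[-1])$ gives $\T(-,V[1])|_{\U[-1]}\in\lex\U[-1]$ for $V\in\V$ and $\T(-,U)|_{\U[-1]}\in\dfct\U[-1]$ for $U\in\U$. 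Since $\U$, hence $\U[-1]$, is contravariantly finite in $\T$ and $\T$ has weak-kernels, the restricted Yoneda functor $\mathbb{Y}_{\U[-1]}\colon\T\to\Mod\U[-1]$ lands in $\mod\U[-1]$ (complete a right $\U[-1]$-approximation $A_0\to Y$ to a triangle and use a right $\U[-1]$-approximation of its third term). As $\T(U[-1],W)=\T(U,W[1])=0$ for $W\in\W\subseteq\V$ we have $\T(-,W)|_{\U[-1]}=0$, so $\mathbb{Y}_{\U[-1]}$, and a fortiori $\Phi:=Q\circ\mathbb{Y}_{\U[-1]}\colon\T\to\lex\U[-1]$, annihilates the ideal $[\W]$ and descends to $\bar\Phi\colon\underline{\T}\to\lex\U[-1]$; moreover for $X\in\H\subseteq\T^+$ a defining triangle $W\to X\to V[1]\to W[1]$ realises $\T(-,X)|_{\U[-1]}$ as the kernel of a morphism of left exact functors, which is again left exact. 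Thus $\Psi\colon\underline{\H}\to\lex\U[-1]$, $X\mapsto\T(-,X)|_{\U[-1]}$, is well defined.

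Next I would show $\Psi$ is fully faithful. Fix $X\in\H\subseteq\T^-$ with a defining triangle $A\xrightarrow{g}X\xrightarrow{h}B\xrightarrow{e}A[1]$, $A\in\U[-1]$, $B\in\W$. Since $\T(-,B)|_{\U[-1]}=0$, the map $g\circ-\colon\U[-1](-,A)\to\T(-,X)|_{\U[-1]}$ is an epimorphism, so any natural transformation out of $\T(-,X)|_{\U[-1]}$ is determined by its value on $g$. For faithfulness: if $f\colon X\to Y$ in $\H$ has $\T(-,f)|_{\U[-1]}=0$, then $fg=0$, so $f$ factors through $h$, i.e.\ through $B\in\W$, whence $\pi(f)=0$. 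For fullness: given $\beta\colon\T(-,X)|_{\U[-1]}\to\T(-,Y)|_{\U[-1]}$, put $g':=\beta_A(g)\in\T(A,Y)$; since $B\in\W\subseteq\U$ the morphism $-e[-1]\colon B[-1]\to A$ lies in $\U[-1]$, and naturality of $\beta$ at $-e[-1]$ together with $g\circ(-e[-1])=0$ forces $g'\circ(-e[-1])=0$, so $g'$ lifts through $g$ to some $f\colon X\to Y$ with $fg=g'$; then $\T(-,f)|_{\U[-1]}$ and $\beta$ agree on $g$, hence coincide.

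The comparison $\Phi\cong\Psi\circ\mathbb{H}$ is the key step. For any $X\in\T$ the coreflection morphism $\alpha^-\colon X^-\to X$ of Proposition~\ref{prop:reflection} is a right $(\T^-)$-approximation, so it induces a bijection $\underline{\T}(A,X^-)\xrightarrow{\sim}\underline{\T}(A,X)$ for every $A\in\U[-1]\subseteq\T^-$; as $[\W](A,-)=0$ this is already a bijection $\T(A,X^-)\xrightarrow{\sim}\T(A,X)$, so $\mathbb{Y}_{\U[-1]}(\alpha^-)$ is an isomorphism. For the reflection morphism $\alpha^+\colon X\to X^+$, apply $\mathbb{Y}_{\U[-1]}$ to the reflection triangle of~(\ref{eq:reflection}): the cokernel of $\T(-,\alpha^+)|_{\U[-1]}$ embeds into $\T(-,U')|_{\U[-1]}\in\dfct\U[-1]$, while its kernel is the image of $\T(-,U'[-1])|_{\U[-1]}\to\T(-,X)|_{\U[-1]}$, which factors through $\T(-,U_X)|_{\U[-1]}\in\dfct\U[-1]$ by the factorization $U'[-1]\to U_X\to X$ in~(\ref{eq:reflection}); as $\dfct\U[-1]$ is Serre, both kernel and cokernel lie in $\dfct\U[-1]$, so $Q$ turns $\T(-,\alpha^+)|_{\U[-1]}$ into an isomorphism. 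Writing $\mathbb{H}=\tau^-\circ\tau^+$ and using $(X^+)^-\in\H$, the chain
$$
\bar\Phi(X)\xrightarrow{\ \sim\ }\bar\Phi(X^+)\xleftarrow{\ \sim\ }\bar\Phi\big((X^+)^-\big)\cong\T\big(-,(X^+)^-\big)\big|_{\U[-1]}=\Psi\big(\mathbb{H}(X)\big)
$$
is natural in $X$ (all arrows come from the units/counits of Proposition~\ref{prop:reflection_gives_adjoint}, the last isomorphism holding because $(X^+)^-\in\T^+$ makes $\mathbb{Y}_{\U[-1]}((X^+)^-)$ already left exact). I expect this third step to be the main obstacle: identifying the kernel and cokernel of the reflection-induced natural transformation with objects of $\dfct\U[-1]$ is exactly where the auxiliary objects $U_X,U'\in\U$ of Proposition~\ref{prop:reflection} and the vanishing $\T(\U,\V[1])=0$ have to be used carefully.

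Finally I would establish that $\Psi$ is essentially surjective, which closes both parts. Given $F\in\lex\U[-1]$, choose a projective presentation $\U[-1](-,A_1)\xrightarrow{\bar a\,\circ\,-}\U[-1](-,A_0)\to F\to0$ with $A_0,A_1\in\U[-1]$ and complete $\bar a$ to a triangle $A_1\xrightarrow{\bar a}A_0\to X'\to A_1[1]$ in $\T$; applying $\mathbb{Y}_{\U[-1]}$, using $\T(-,A_i[1])|_{\U[-1]}\in\dfct\U[-1]$ (Lemma~\ref{lem:char_of_representable}) and exactness of $Q$, one gets $\Phi(X')\cong Q(F)\cong F$ because $F\in(\dfct\U[-1])^\perp$. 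Hence $\Phi$, and therefore $\Psi\circ\mathbb{H}$, is essentially surjective onto $\lex\U[-1]$; as $\mathbb{H}$ restricts to the quotient $\pi$ on $\H$ it is essentially surjective onto $\underline{\H}$, so $\Psi$ is too. Being fully faithful and essentially surjective, $\Psi$ is an equivalence, which is~(1), and the isomorphism $\mathbb{H}\cong\Psi^{-1}\circ\Phi=\Psi^{-1}\circ Q\circ\mathbb{Y}_{\U[-1]}$ obtained above is exactly~(2).
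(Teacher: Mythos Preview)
Your proposal is correct and follows essentially the same strategy as the paper: define $\Psi$ as the descent of the restricted Yoneda functor, verify that $\T(-,H)|_{\U[-1]}\in\lex\U[-1]$ for $H\in\H$, prove $\Psi$ is fully faithful using the triangle $A\to X\to B\to A[1]$ coming from $\H\subseteq\U[-1]*\W$, and deduce essential surjectivity together with part~(2) from the comparison $\Phi\cong\Psi\circ\mathbb{H}$ obtained via the reflection/coreflection triangles. The only noteworthy variations are organizational or cosmetic: you prove fully faithful before dense (the paper does the reverse), you obtain left exactness of $\T(-,H)|_{\U[-1]}$ as a kernel between left exact functors using the $\T^+$-triangle (the paper checks the deflation-to-monomorphism condition directly via the $\T^+$-triangle), and you get the coreflection isomorphism from the adjunction bijection $\underline{\T}(A,X^-)\cong\underline{\T}(A,X)$ for $A\in\U[-1]\subseteq\T^-$ rather than by analysing the coreflection triangle.
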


To prove Theorem \ref{thm:heart_is_lex}, we use the following lemma.

\begin{lemma}\label{lem:reflection}
Let $X$ be an object in $\T$.
Then there exist morphisms $X\xto{\alpha^+}X^+\xleftarrow{\alpha^-}X^\pm$ with $X^+\in\T^+$ and $X^\pm\in\H$ so that
they induce isomorphisms
$$Q(-,X)|_{\U[-1]}\xto{\sim}Q(-,X^+)|_{\U[-1]}\xleftarrow{\sim}Q(-,X^\pm)|_{\U[-1]}$$
in $\lex\U[-1]$.
Moreover, we have an isomorphism $\mathbb{H}(X)\cong \pi (X^\pm)$.
\end{lemma}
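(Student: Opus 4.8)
The strategy is to take a reflection triangle and a coreflection triangle for $X$ as supplied by Proposition \ref{prop:reflection}, and to check that each of the structure maps becomes an isomorphism after applying the composite functor $Q\circ\mathbb{Y}_{\U[-1]}$. First I would invoke Proposition \ref{prop:reflection}(a) to obtain a reflection triangle $U'[-1]\to X\xto{\alpha^+}X^+\to U'$ with $U'\in\U$ and $X^+\in\T^+$, factored through an object $U_X\in\U$. Rotating, the map $\alpha^+$ fits into a triangle whose third term lies in $\U[-1]*\U$ (precisely, $U_X[-1]\to X\xto{\alpha^+}X^+\to U_X$ together with the splitting data); applying the restricted Yoneda functor $\mathbb{Y}_{\U[-1]}$ and using Lemma \ref{lem:ext_seq} yields an exact sequence in $\mod\U[-1]$ whose outer terms are representable functors $(-,U_X[-1])|_{\U[-1]}$ and pieces of $(-,U'[-1])|_{\U[-1]}$, all of which lie in $\dfct\U[-1]$ by (the $[-1]$-shifted form of) Lemma \ref{lem:char_of_representable}(2). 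Hence after applying $Q$, which kills $\dfct\U[-1]$, the induced map $Q(-,X)|_{\U[-1]}\to Q(-,X^+)|_{\U[-1]}$ is an isomorphism.

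**Producing $X^\pm$ and the second isomorphism.** Next I would apply Proposition \ref{prop:reflection}(b) to the object $X^+\in\T^+$: this gives a coreflection triangle $V'\to (X^+)^-\xto{\alpha^-}X^+\to V'[1]$ with $V'\in\V$ and $(X^+)^-\in\T^-$, factored through $V_{X^+}\in\V$. Set $X^\pm:=(X^+)^-$. By \cite[Lem. 4.6]{Nak11}, since $X^+\in\T^+$ we have $X^\pm=(X^+)^-\in\H$, which gives the claimed membership. For the second isomorphism, I apply $\mathbb{Y}_{\U[-1]}$ to the coreflection triangle and again use Lemma \ref{lem:ext_seq}: the relevant third terms now involve $(-,V'[1])|_{\U[-1]}$ and $(-,V_{X^+})|_{\U[-1]}$ and their shifts. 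Here the key point is that $(\U[-1],V)=0=(\U[-1],V'[1])$ (equivalently $\U(-,V[1])=0$ from the cotorsion pair axiom), so these functors are \emph{zero}, not merely defects; thus $\mathbb{Y}_{\U[-1]}(\alpha^-)$ is already close to an isomorphism and $Q(-,X^\pm)|_{\U[-1]}\xto{\sim}Q(-,X^+)|_{\U[-1]}$ follows. Combining the two steps gives the chain of isomorphisms in $\lex\U[-1]$.

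**The cohomology identification.** Finally, to prove $\mathbb{H}(X)\cong\pi(X^\pm)$, I would trace through Proposition \ref{prop:reflection_gives_adjoint}. By construction $X^+$ is a reflection of $X$, so $\tau^+\pi(X)\cong\pi(X^+)$ in $\underline{\T}^+$; and $X^\pm=(X^+)^-$ is a coreflection of $X^+$, so $\tau^-\pi(X^+)\cong\pi(X^\pm)$ in $\underline{\T}^-$. Since $\mathbb{H}=\tau^-\circ\tau^+$ by Proposition \ref{prop:reflection_gives_adjoint}, we get $\mathbb{H}(X)=\tau^-\tau^+\pi(X)\cong\tau^-\pi(X^+)\cong\pi(X^\pm)$, and $\pi(X^\pm)$ already lies in $\underline{\H}$ since $X^\pm\in\H$. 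One should double-check that the functoriality of $\tau^+,\tau^-$ is compatible with the \emph{particular} reflection/coreflection triangles chosen (a priori the functors come from a choice, but different choices give canonically isomorphic outputs), and that passing to the ideal quotient by $[\W]$ is harmless because all the connecting objects introduced lie in $\U$ or $\V$ and the relevant Hom-groups vanish.

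**Main obstacle.** The delicate part is not any single diagram chase but the bookkeeping at the level of functors in $\mod\U[-1]$: one must identify precisely which functors arising from the (co)reflection triangles land in $\dfct\U[-1]$ versus vanish outright, using the shifted versions of Lemma \ref{lem:char_of_representable} and the orthogonality $\T(\U,\V[1])=0$. The shift by $[-1]$ throughout — so that the relevant extriangulated category is $\U[-1]$ with $\mathbb{E}(+,-)=\U[-1](+,-[1])=\T(\U,\V\cup\U)$-type groups — is the easiest place to make a sign or indexing error, so I would set up the shifted dictionary carefully at the outset and then the two isomorphisms fall out of Lemma \ref{lem:ext_seq} applied to the two triangles.
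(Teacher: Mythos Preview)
Your approach matches the paper's: apply $\mathbb{Y}_{\U[-1]}$ to the reflection and coreflection triangles of Proposition~\ref{prop:reflection} and show that the resulting kernels and cokernels are killed by $Q$. There are, however, two indexing slips you should fix --- precisely the kind of shift error you yourself anticipate in your final paragraph.

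First, the $[-1]$-shifted form of Lemma~\ref{lem:char_of_representable}(2) asserts $(-,U)|_{\U[-1]}\in\dfct\U[-1]$ for $U\in\U$, \emph{not} $(-,U[-1])|_{\U[-1]}$; the latter are representable functors on $\U[-1]$ and hence projective. Working with the actual reflection triangle $U'[-1]\to X\xto{\alpha^+}X^+\to U'$ (there is no triangle $U_X[-1]\to X\to X^+\to U_X$ in general), the cokernel of $(-,\alpha^+)|_{\U[-1]}$ is a subobject of the defect $(-,U')|_{\U[-1]}$, while its kernel is the image of $(-,U'[-1])|_{\U[-1]}\to(-,X)|_{\U[-1]}$, which factors through the defect $(-,U_X)|_{\U[-1]}$ via the given factorization $U'[-1]\to U_X\to X$. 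Both therefore lie in the Serre subcategory $\dfct\U[-1]$, and $Q(-,\alpha^+)$ is an isomorphism.

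Second, your vanishing claim $(\U[-1],V'[1])=0$ is false: this is $\T(\U,V'[2])$, which the cotorsion-pair axiom does not control. The correct argument uses the factorization you do mention but then set aside: the connecting map $X^+\to V'[1]$ factors through $V_{X^+}\in\V$, and $(-,V_{X^+})|_{\U[-1]}=\T(\U,V_{X^+}[1])=0$. Combined with $(-,V')|_{\U[-1]}=\T(\U,V'[1])=0$, this shows $(-,\alpha^-)|_{\U[-1]}$ is already an isomorphism in $\mod\U[-1]$, even before applying $Q$. Your final identification $\mathbb{H}(X)\cong\pi(X^\pm)$ via Proposition~\ref{prop:reflection_gives_adjoint} is correct.
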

\begin{proof}
For a given $X\in\T$, we consider a reflection triangle (\ref{eq:reflection}) for $X$.
Since Lemma \ref{lem:char_of_representable}(3) tells $(-,U')|_{\U[-1]}, (-.U_X)|_{\U[-1]}\in\dfct\U[-1]$, the induced morphism $Q(-,\alpha^+)|_{\U[-1]}$ is an isomorphism in $\lex\U[-1]$.
Similarly, considering a coreflection triangle for $X^+$, we have a desired morphism.
We skip the details.
\end{proof}

\begin{proof}[Proof of Theorem \ref{thm:heart_is_lex}]
(1) For readability purposes we divide the proof in some steps.

\begin{claim}\label{claim:dense}
The functor $\Psi$ is dense.
\end{claim}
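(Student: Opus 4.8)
The plan is to prove that $\Psi$ is dense by showing that every left exact functor $F\in\lex\U[-1]$ lies in the essential image of $\Psi$, i.e., is isomorphic to $Q(-,X)|_{\U[-1]}$ for some $X\in\H$. Since $\Psi$ sends $\pi(X)$ to $Q(-,X)|_{\U[-1]}$ (equivalently to the image of the representable functor $(-,X)|_{\U[-1]}$ in the Serre quotient), it suffices to produce, for a given $F$, an object $X\in\H$ with $F\cong Q(-,X)|_{\U[-1]}$ in $\lex\U[-1]$.

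First I would take any $F\in\lex\U[-1]$ and use that $F$ is finitely presented to pick a projective presentation $(-,U_1[-1])\to(-,U_0[-1])\to F\to 0$ in $\mod\U[-1]$ with $U_0,U_1\in\U$; the first map is induced by a morphism $f:U_1[-1]\to U_0[-1]$ in $\U[-1]$, equivalently $f':U_1\to U_0$ in $\T$. Completing $f'$ to a triangle $K\to U_1\xto{f'}U_0\to K[1]$ in $\T$, one checks (exactly as in the proof of Proposition \ref{prop:existence_of_right_adjoint}) that $F$ is computed from this triangle. Now apply Lemma \ref{lem:reflection}: there is an object $X\in\T$ — built from $K$ via the cotorsion pair, or more directly we may first locate an $X$ with $Q(-,X)|_{\U[-1]}\cong F$ and then replace it — together with morphisms $X\xto{\alpha^+}X^+\xleftarrow{\alpha^-}X^\pm$ inducing isomorphisms $Q(-,X)|_{\U[-1]}\xto{\sim}Q(-,X^+)|_{\U[-1]}\xleftarrow{\sim}Q(-,X^\pm)|_{\U[-1]}$ with $X^\pm\in\H$. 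Then $F\cong Q(-,X^\pm)|_{\U[-1]}=\Psi(\pi(X^\pm))$, which proves density.

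The key point that makes this work is identifying, from the presentation data, an object of $\T$ whose restricted-Yoneda image realizes $F$ after passing to the quotient: the cone $K$ of $f'$ yields, via the defining triangle $U\to K\to V[1]\to U[1]$ of the cotorsion pair applied to $K$ and the construction in triangle (\ref{tri:1}) and diagram (\ref{diag:existence_of_right_adjoint1}), the needed object. Once we are inside $\T$, Lemma \ref{lem:reflection} does the rest by pushing the representable into the heart while keeping its class in $\lex\U[-1]$ unchanged. I expect the main obstacle to be bookkeeping: carefully tracking exact sequences through the Serre quotient $Q$ to confirm that the chosen $X$ (or $K$) gives precisely $F$ and not merely a functor with the same defect, and verifying that the isomorphisms of Lemma \ref{lem:reflection} assemble compatibly. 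The conceptual content is light — it is essentially the surjectivity half of the equivalence $\frac{\mod\U[-1]}{\dfct\U[-1]}\simeq\lex\U[-1]$ from Proposition \ref{prop:existence_of_right_adjoint} combined with the reflection/coreflection machinery of Nakaoka — so the challenge is organizing the diagram chase cleanly rather than finding a new idea.
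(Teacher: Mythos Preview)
Your approach is essentially the paper's: take a projective presentation of $F$, complete the underlying morphism to a triangle in $\T$, recognize the cone as an object whose restricted Yoneda image recovers $QF$, and then use Lemma \ref{lem:reflection} to push it into $\H$. The one place you muddy things is the middle step. The object you want is simply the cone of $f$ itself: from the triangle $U_1[-1]\xto{f}U_0[-1]\to X\to U_1$ (equivalently, your $K$ after rotating the triangle on $f'$) one restricts to $\U[-1]$ and applies $Q$; since $(-,U_1)|_{\U[-1]}\in\dfct\U[-1]$ by Lemma \ref{lem:char_of_representable}(2), that term dies under $Q$ and the resulting right-exact sequence gives $QF\cong Q(-,X)|_{\U[-1]}$ directly. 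There is no need to invoke the construction in the proof of Proposition \ref{prop:existence_of_right_adjoint} or to manufacture a further object from $K$ via the cotorsion pair --- that machinery was built to produce the right adjoint, not to exhibit a preimage, and importing it here only obscures a one-line argument.
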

\begin{proof}
Let $F$ be an object in $\mod\U[-1]$ with projective presentation
$(-,U'[-1])\xto{(-,f)}(-,U[-1])\to F\to 0$.
Complete triangles $\eta:U'[-1]\xto{f}U[-1]\to X\to U$ in $\T$.
Since $(-,U)|_{\U[-1]}\in\dfct\U[-1]$ by Lemma \ref{lem:char_of_representable}(2), we have the following exact sequence
$$
Q(-,U'[-1])\xto{Q(-,f)}Q(-,U[-1])\to Q(-,X)|_{\U[-1]}\to 0
$$
in $\lex\U[-1]$.
Obviously we get $QF\cong Q(-,X)|_{\U[-1]}$.

By Lemma \ref{lem:reflection}, there exist morphisms $X\xto{\alpha^+}X^+\xleftarrow{\alpha^-}X^\pm$ with $X^\pm\in\H$.
Applying $\mathbb{Y}_{\U[-1]}\circ Q$ to this diagram,
we have isomorphims
$Q(-,X)|_{\U[-1]}\xto{\sim}Q(-,X^+)|_{\U[-1]}\xleftarrow{\sim}Q(-,X^\pm)|_{\U[-1]}$.
Since $X^\pm\in\H$, this shows that $\Psi$ is dense.
\end{proof}

\begin{claim}\label{claim:left_ex}
The functor $(-,H)|_{\U[-1]}$ is left exact for any $H\in\H$.
\end{claim}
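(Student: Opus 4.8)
The plan is to derive everything from the containment $\H=\T^+\cap\T^-\subseteq\T^+=\W*\V[1]$. For $H\in\H$ this gives a triangle $W\to H\xto{s}V[1]\xto{\partial}W[1]$ in $\T$ with $W\in\W\subseteq\U\cap\V$ and $V\in\V$, and the only further inputs I will use are the cotorsion-pair orthogonality $\T(\U,\V[1])=0$ and its $[-1]$-shift $\T(\U[-1],\V)=0$. Fix a conflation $A\xto{i}B\xto{p}C$ in $\U[-1]$; it is realized by a triangle $A\xto{i}B\xto{p}C\xto{\delta}A[1]$ in $\T$ with $A,B,C\in\U[-1]$, so applying $\T(-,H)$ already yields the exactness of $\T(C,H)\xto{p^*}\T(B,H)\xto{i^*}\T(A,H)$; hence $(-,H)|_{\U[-1]}$ is half-exact for free. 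The whole remaining point is to show that $(-,H)|_{\U[-1]}$ carries the deflation $p$ to a monomorphism, i.e. that the connecting map $\delta^*\colon\T(A[1],H)\to\T(C,H)$, $h\mapsto h\delta$, vanishes.

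To prove $\delta^*=0$ I would argue in two short steps. First, $A\in\U[-1]$ forces $A[1]\in\U$, so $\T(A[1],V[1])=0$ by $\T(\U,\V[1])=0$; applying $\T(A[1],-)$ to the triangle $W\to H\xto{s}V[1]$ then shows $\T(A[1],W)\to\T(A[1],H)$ is surjective, so an arbitrary $h\colon A[1]\to H$ factors as $h=wh'$ with $h'\colon A[1]\to W$. Second, $h\delta=w(h'\delta)$ and $h'\delta\in\T(C,W)$; since $C\in\U[-1]$ while $W\in\W\subseteq\V$, we get $\T(C,W)=0$ from $\T(\U[-1],\V)=0$, hence $h\delta=0$. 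As $h$ was arbitrary, $\delta^*=0$, so $p^*$ is injective, and together with half-exactness this says $(-,H)|_{\U[-1]}$ is left exact.

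I do not expect a serious obstacle: this is precisely the mechanism already used in Lemma~\ref{lem:char_of_representable}(1) for $(-,V[2])|_\U$, the only new feature being that $H$ is not itself ``$\V$-perpendicular'', which is exactly what forces the preliminary factorization of $h$ through the $W$-part of $H$. The one point needing care is bookkeeping with shifts --- remembering that an $\mathbb{E}$-triangle in $\U[-1]$ is realized by a $\T$-triangle whose outer terms lie in $\U[-1]$, so that the memberships $A[1]\in\U$ and $C\in\U[-1]$ match up precisely with the two slots where cotorsion-pair orthogonality is available. As a consistency check one could also run the slightly more machinery-heavy variant: the triangle for $H$ and $\T(\U[-1],\V)=0$ identify $(-,H)|_{\U[-1]}$ with $\ker\big((-,V[1])|_{\U[-1]}\xto{\partial_*}(-,W[1])|_{\U[-1]}\big)$, both functors being left exact by the $[1]$-shifted form of Lemma~\ref{lem:char_of_representable}(1), and one then invokes closure of $\lex\U[-1]$ under kernels of morphisms in $\mod\U[-1]$ (a routine check); but the direct argument above is cleaner and self-contained.
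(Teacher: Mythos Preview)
Your proof is correct and is essentially the same as the paper's: both use the triangle $W\to H\to V[1]$ coming from $H\in\T^+=\W*\V[1]$ to factor any morphism $A[1]\to H$ (the paper writes this as $U'\to H$ with $U'\in\U$) through $W$, and then kill the composite with the connecting map using $\T(\U[-1],\W)\subseteq\T(\U[-1],\V)=0$. The only difference is notational---the paper parametrizes conflations in $\U[-1]$ by shifting conflations in $\U$, while you work directly with objects $A,B,C\in\U[-1]$---and the paper invokes Proposition~\ref{prop:char_of_perp}(1) to reduce to the deflation statement, whereas you observe half-exactness directly, but the core argument is identical.
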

\begin{proof}
Since $(-,H)|_{\U[-1]}$ is half exact, thanks to Proposition \ref{prop:char_of_perp}(1),
it suffices to show that $(-,H)|_{\U[-1]}$ sends a deflation to a monomorphism.
To this end, let $U'\to U\to U''$ be a conflation in $\U$, equivalently, there exists a triangle $U'[-1]\to U[-1]\xto{} U''[-1]\xto{a} U'$.
Applying the functor $(-,H)$ to the above triangle, we get an exact sequence
$$(U',H)\xto{-\circ a} (U''[-1],H)\xto{}(U[-1],H)\to (U'[-1],H)$$
To show the morphism $(U',H)\xto{-\circ a} (U''[-1],H)$ is zero, we take an element $b\in (U', H)$.
Since $H\in \W*\V[1]$, we have the following commutative diagram
$$
\xymatrix@C=18pt@R=18pt{
U'[-1]\ar[r]&U[-1]\ar[r]&U''[-1]\ar[r]^a&U'\ar@{.>}[dl]\ar[d]^b\ar[dr]^0&&\\
&&W\ar[r]&H\ar[r]&V[1]\ar[r]&W[1]
}
$$
with $W\in\W$ and $V\in\V$.
Thus the morphism $b\circ a:U''[-1]\to H_1$ factors through $W$, which shows $b\circ a =0$.
We have then concluded $(-,H)|_{\U[-1]}\in\lex\U[-1]$.
\end{proof}

\begin{claim}\label{claim:full}
The functor $\Psi$ is full.
\end{claim}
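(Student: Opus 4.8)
The plan is to deduce fullness of $\Psi$ from the lifting property of projective $\U[-1]$-modules together with the completion of commutative squares to morphisms of triangles in $\T$. Fix $H,H'\in\H$ and a morphism $\phi\colon\Psi\pi(H)\to\Psi\pi(H')$ in $\lex\U[-1]$. By Claim~\ref{claim:left_ex}, both $(-,H)|_{\U[-1]}$ and $(-,H')|_{\U[-1]}$ lie in $\lex\U[-1]=(\dfct\U[-1])^\perp$, so Lemma~\ref{lem:property_of_localization_seq}(1) identifies $\phi$ with a unique natural transformation $\widetilde\phi\colon(-,H)|_{\U[-1]}\to(-,H')|_{\U[-1]}$ in $\mod\U[-1]$. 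It therefore suffices to produce $f\in\H(H,H')$ with $(-,f)|_{\U[-1]}=\widetilde\phi$, for then $\Psi\pi(f)=\phi$ by the construction of $\Psi$ in (\ref{diag:lex_vs_heart}).

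The first step is to exhibit a projective presentation of $(-,H)|_{\U[-1]}$ coming from a triangle in $\T$. Since $H\in\H\subseteq\T^-=\U[-1]*\W$, rotating the defining triangle yields a triangle $W_H[-1]\to U_H[-1]\xto{p}H\to W_H$ in $\T$ with $U_H\in\U$ and $W_H\in\W$. For every $A\in\U[-1]$ one has $\T(A,W_H)=0$, since writing $A=A'[-1]$ with $A'\in\U$ this reads $\T(A',W_H[1])=0$, which holds because $A'\in\U$, $W_H\in\V$ and $(\U,\V)$ is a cotorsion pair. Applying $\T(A,-)$ to the triangle and letting $A$ vary over $\U[-1]$ thus produces an exact sequence
\[
(-,W_H[-1])|_{\U[-1]}\to(-,U_H[-1])|_{\U[-1]}\xto{(-,p)}(-,H)|_{\U[-1]}\to 0
\]
in $\mod\U[-1]$ whose first two terms are representable, hence projective. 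Fix the analogous triangle $W_{H'}[-1]\to U_{H'}[-1]\xto{p'}H'\to W_{H'}$ and presentation for $H'$.

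Next I would lift $\widetilde\phi$ through these presentations. As $(-,U_H[-1])|_{\U[-1]}$ is projective and $(-,p')|_{\U[-1]}$ is epic, the composite $\widetilde\phi\circ(-,p)|_{\U[-1]}$ factors as $(-,p')|_{\U[-1]}\circ(-,h_0)|_{\U[-1]}$ for some $h_0\colon U_H[-1]\to U_{H'}[-1]$ in $\T$ (Yoneda over $\U[-1]$). A second use of projectivity, now of $(-,W_H[-1])|_{\U[-1]}$, yields $h_1\colon W_H[-1]\to W_{H'}[-1]$ so that the square formed by $h_0$, $h_1$ and the two leftmost triangle maps commutes as natural transformations on $\U[-1]$; evaluating that square at the object $W_H[-1]\in\U[-1]$ shows it already commutes in $\T$. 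By the completion axiom of triangulated categories this square extends to a morphism of triangles, giving $f\colon H\to H'$ with $f\circ p=p'\circ h_0$. Then $(-,f)|_{\U[-1]}\circ(-,p)|_{\U[-1]}=(-,p')|_{\U[-1]}\circ(-,h_0)|_{\U[-1]}=\widetilde\phi\circ(-,p)|_{\U[-1]}$, and since $(-,p)|_{\U[-1]}$ is epic we conclude $(-,f)|_{\U[-1]}=\widetilde\phi$; finally $f\in\T(H,H')=\H(H,H')$ because $\H$ is a full subcategory of $\T$.

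The step I expect to be the main obstacle is bookkeeping rather than conceptual: arranging the two projective presentations to arise from honest triangles in $\T$ (so that the triangulated completion axiom is applicable) and verifying that the lifted commuting square of natural transformations descends to a genuine commuting square in $\T$ before applying that axiom. The remaining ingredients — the vanishing $\T(A,W_H)=0$, the identification of $\phi$ with $\widetilde\phi$ via Lemma~\ref{lem:property_of_localization_seq}(1), and the final epimorphism argument — are routine.
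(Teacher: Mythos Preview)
Your argument is correct and follows essentially the same approach as the paper's proof: both use the triangle $W_H[-1]\to U_H[-1]\to H\to W_H$ coming from $H\in\U[-1]*\W$ to obtain a projective presentation of $(-,H)|_{\U[-1]}$, lift $\phi$ to a map between these presentations, and then complete the resulting commutative square to a morphism of triangles. Your write-up is simply more explicit than the paper's (which compresses the lifting and the passage from a square in $\mod\U[-1]$ to one in $\T$ into a single sentence); in particular, your observation that $W_H[-1]\in\U[-1]$ so that evaluating at it promotes the natural-transformation square to an honest square in $\T$ is the right way to justify that step.
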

\begin{proof}
Let $H_1$ and $H_2$ be objects in $\H$.
By Claim \ref{claim:left_ex}, $(-, H_i)|_{\U[-1]}$ belongs to $\Lex\U[-1]$ for $i=1,2$.
Thus, it suffices to show that, for any morphism $\phi:(-,H_1)|_{\U[-1]}\to(-,H_1)|_{\U[-1]}$, there exists a morphism $c:H_1\to H_2$ such that $\Psi(c)\cong \phi$.
Since $H_i\in \U[-1]*\W$, there exist triangles $\eta_i:W_i[-1]\to U_i[-1]\to H_i\to W_i$ with $U_i\in\U$ and $W_i\in\W$ for each $i=1,2$.
These triangles induce projective presentations $(-,W_i[-1])\to (-,U_i[-1])\to (-,H_i)|_{\U[-1]}\to 0$ in $\mod\U[-1]$.
Then the morphism $\phi$ induces a morphism $(a,b,c):\eta_1\to\eta_2$ between triangles.
Hence we have an isomorphism $\Psi(c)\cong \phi$.
\end{proof}

\begin{claim}\label{claim:faithful}
The functor $\Psi$ is faithful.
\end{claim}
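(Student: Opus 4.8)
The plan is to unwind the definition of $\Psi$ and reduce faithfulness to a single factorization in $\T$. Suppose $c\colon H_1\to H_2$ is a morphism in $\H$ with $\Psi(\pi(c))=0$. By the commutativity of (\ref{diag:lex_vs_heart}), the morphism $\Psi(\pi(c))$ is carried, under the natural isomorphism $\Psi\circ\pi\cong Q\circ\mathbb{Y}_{\U[-1]}$, to $Q\big((-,c)|_{\U[-1]}\big)$; and by Claim \ref{claim:left_ex} together with Proposition \ref{prop:char_of_perp}(2), both $(-,H_1)|_{\U[-1]}$ and $(-,H_2)|_{\U[-1]}$ lie in $\lex\U[-1]=(\dfct\U[-1])^{\perp}$. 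Since $Q$ restricts to a fully faithful, in particular faithful, functor on the perpendicular category by Lemma \ref{lem:property_of_localization_seq}(2), I would conclude that $(-,c)|_{\U[-1]}=0$ already in $\mod\U[-1]$. Spelled out, this says that $c\circ g=0$ for every $U\in\U$ and every morphism $g\colon U[-1]\to H_1$.

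Next I would exploit the membership $H_1\in\T^-=\U[-1]*\W$. Choose a triangle $U_1[-1]\xto{g_1}H_1\xto{h_1}W_1\to U_1$ with $U_1\in\U$ and $W_1\in\W$ (the same triangle $\eta_1$ used in the proof of Claim \ref{claim:full}). As $g_1\colon U_1[-1]\to H_1$ with $U_1\in\U$, the vanishing above gives $c\circ g_1=0$. Applying $\T(-,H_2)$ to this triangle yields the exact sequence
\[
\T(W_1,H_2)\xrightarrow{-\circ h_1}\T(H_1,H_2)\xrightarrow{-\circ g_1}\T(U_1[-1],H_2),
\]
so $c$ lies in the image of $-\circ h_1$; that is, $c=c'\circ h_1$ for some $c'\colon W_1\to H_2$. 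Since $W_1\in\W$, the morphism $c$ factors through an object of $\W$, hence $\pi(c)=0$ in $\underline{\H}=\H/[\W]$, which is the asserted faithfulness. Combined with Claims \ref{claim:dense}, \ref{claim:left_ex} and \ref{claim:full}, this establishes that $\Psi$ is an equivalence, proving part (1).

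The only point that needs care — and the only place the preceding machinery is genuinely invoked — is the passage from ``$\Psi(\pi(c))=0$ in $\lex\U[-1]$'' to ``$(-,c)|_{\U[-1]}=0$ in $\mod\U[-1]$'': a priori the natural transformation $(-,c)|_{\U[-1]}$ could be nonzero yet become zero in the Serre quotient, i.e.\ factor through $\dfct\U[-1]$. The fact that both its source and its target are left exact, together with Lemma \ref{lem:property_of_localization_seq}, rules this out. Once this is in place, the remainder is a one-line diagram chase with the defining triangle for $\T^-$, and I do not anticipate any further obstacle.
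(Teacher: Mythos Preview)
Your proof is correct and follows essentially the same approach as the paper's: first argue that $(-,c)|_{\U[-1]}$ is already zero in $\mod\U[-1]$ (using that both source and target lie in $\lex\U[-1]=(\dfct\U[-1])^\perp$), then use the triangle $U_1[-1]\to H_1\to W_1\to U_1$ coming from $H_1\in\T^-=\U[-1]*\W$ to factor $c$ through $W_1\in\W$. The paper phrases the first step as ``the morphism factors through an object of $\dfct\U[-1]$, hence is zero since $(-,H_i)|_{\U[-1]}\in\lex\U[-1]$,'' while you invoke Lemma~\ref{lem:property_of_localization_seq}(2) directly; these are the same observation.
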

\begin{proof}
Let $h:H_0\to H_1$ be a morphism in $\H$ such that the induced morphism
$$(-,H_0)|_{\U[-1]}\xto{h\circ -}(-,H_1)|_{\U[-1]}$$
factors through an object in $\dfct\U[-1]$.
Since $(-,H_0)|_{\U[-1]}\in\lex\U[-1]$ by Claim \ref{claim:left_ex},
this morphism $h\circ -$ should be zero.
Since $H_0\in\U[-1]*\W$, we have the following commutative diagram
$$
\xymatrix@C=18pt@R=18pt{
U_0[-1]\ar[r]\ar[rd]_0&H_0\ar[r]\ar[d]^{h}&W_0\ar@{.>}[ld]\ar[r]&U_0\\
&H_1&&
}
$$
with $U_0\in\U$ and $W_0\in\W$,
showing that $h$ factors through $W_0$.
\end{proof}
Combining the above claims, we conclude that $\Psi$ is an equivalence.

(2) Consider a morphism $c:X_1\to X_2$ in $\T$.
By Proposition \ref{prop:reflection}, we have the following commutative diagram
\begin{equation}\label{diag:reflection_coreflection}
\xymatrix@C=18pt@R=15pt{
X_1\ar[d]_c\ar[r]^{\alpha_1^+}&X_1^+\ar[d]^{c^+}&X_1^\pm\ar[l]_{\alpha_1^-}\ar[d]^{c^\pm}\\
X_2\ar[r]^{\alpha_2^+}&X_2^+&X_2^\pm\ar[l]_{\alpha_2^-}
}
\end{equation}
with $X^+_i\in\T^+$ and $X^\pm_i\in\H$ for $i=1,2$.
Denote by $\Phi$ the composed functor $\T\to\mod\U[-1]\xto{Q}\lex\U[-1]\xto{\Psi^{-1}}\underline{\H}$.
Applying $\Phi$ to (\ref{diag:reflection_coreflection}), by Lemma \ref{lem:reflection},
we have an isomorphism $\Phi(c)\cong \Phi(c^\pm)$.
By the definition of the cohomological functor, we have $\pi(c^\pm)=\tau^+\tau^-(c)=\mathbb{H}(c)$.
By the commutativity of (\ref{diag:lex_vs_heart}), we have an isomorphism $\Phi(c^\pm)\cong\pi(c^\pm)$.
Hence we have an isomorphism $\Phi\cong\mathbb{H}$.
\end{proof}

By combining Proposition \ref{prop:gq_equivalent_to_functor_category} and Theorem \ref{thm:heart_is_lex}, we recover the following result.

\begin{corollary}\cite[Prop. 4.15]{LN19}\cite[Thm. 2.10]{Liu15}
Let $(\U,\V)$ be a cotorsion pair  in a triangulated category $\T$ and $\P$ the full subcategory of projectives in the extrianguated category $\U$.
If $\U$ has enough projectives, then we have an equivalence $\underline{\H}\xto{\sim}\mod\P$.
\end{corollary}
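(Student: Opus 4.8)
The plan is to obtain the corollary by concatenating the equivalence of Theorem~\ref{thm:heart_is_lex}(1) with the identification of the Serre quotient $\frac{\mod\U}{\dfct\U}$ supplied by Corollary~\ref{cor:localization_seq_for_cotorsion_class} on the one hand and by Proposition~\ref{prop:gq_equivalent_to_functor_category}(1) on the other, after a purely formal re-indexing by the shift of $\T$.

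First I would record the bookkeeping with the shift. The autoequivalence $[-1]\colon\T\to\T$ is a triangle equivalence, so it restricts to an equivalence of extriangulated categories $[-1]\colon\U\xto{\sim}\U[-1]$: the conflations of $\U$ are exactly the sequences arising from triangles $A\to B\to C\to A[1]$ in $\T$, and $[-1]$ carries these to the sequences defining the conflations of $\U[-1]$. Hence $\U[-1]$ is again an extriangulated category with weak-kernels; its subcategory of projectives is $\P[-1]$, since $P\in\U$ satisfies $\T(P,\U[1])=0$ if and only if $P[-1]$ satisfies $\T(P[-1],\U[-1][1])=0$; and applying $[-1]$ to the defining conflations $C'\to P\to C$ shows that $\U[-1]$ has enough projectives whenever $\U$ does. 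In particular $[-1]$ induces equivalences $\mod\U\simeq\mod\U[-1]$, $\lex\U\simeq\lex\U[-1]$, $\dfct\U\simeq\dfct\U[-1]$ and $\mod\P\simeq\mod(\P[-1])$.

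Now I would assemble the chain. Theorem~\ref{thm:heart_is_lex}(1) gives a natural equivalence $\underline{\H}\xto{\sim}\lex\U[-1]$. Corollary~\ref{cor:localization_seq_for_cotorsion_class}, transported along $[-1]$ (or applied directly to the cotorsion pair $(\U[-1],\V[-1])$, which is again a cotorsion pair), identifies $\lex\U[-1]$ with the Serre quotient $\frac{\mod\U[-1]}{\dfct\U[-1]}$. Finally, $\U[-1]$ is an extriangulated category with weak-kernels that has enough projectives with projective subcategory $\P[-1]$, so Proposition~\ref{prop:gq_equivalent_to_functor_category}(1) produces an equivalence $\frac{\mod\U[-1]}{\dfct\U[-1]}\xto{\sim}\mod(\P[-1])$. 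Composing these three equivalences with $\mod(\P[-1])\simeq\mod\P$ yields the desired equivalence $\underline{\H}\xto{\sim}\mod\P$; using Proposition~\ref{prop:gq_equivalent_to_functor_category}(2) together with Theorem~\ref{thm:heart_is_lex}(2) one can moreover check that it is compatible with the cohomological functor $\mathbb{H}$.

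Since every ingredient has already been established, there is no substantial obstacle: the one point that genuinely needs checking is the bookkeeping of the second paragraph, namely that the extriangulated structure, the class of projective objects, and the hypothesis of enough projectives all transport cleanly along $[-1]\colon\U\to\U[-1]$. One could bypass the shift altogether by applying Proposition~\ref{prop:gq_equivalent_to_functor_category}(1) directly to $\U$, obtaining $\frac{\mod\U}{\dfct\U}\simeq\mod\P$, and then invoking only the single identification $\lex\U\simeq\lex\U[-1]$ to match this with Theorem~\ref{thm:heart_is_lex}(1); the two routes are interchangeable.
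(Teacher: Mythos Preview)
Your proposal is correct and follows exactly the approach the paper indicates: the corollary is stated without proof, merely as a consequence of combining Proposition~\ref{prop:gq_equivalent_to_functor_category} and Theorem~\ref{thm:heart_is_lex}. Your explicit treatment of the shift $[-1]$ (transporting the extriangulated structure, projectives, and enough-projectives hypothesis from $\U$ to $\U[-1]$) fills in the only bookkeeping the paper leaves implicit, and your observation that one may equally work directly with $\U$ and then match via $\lex\U\simeq\lex\U[-1]$ is a fine alternative.
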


\subsection*{Acknowledgements}
The author wishes to thank Professor Hiroyuki Nakaoka for his useful comments. 
He also wishes to thank Haruhisa Enomoto for having a helpful discussion about the exact category.


\end{document}